\newtheorem{theorem}{Theorem}%  meant for continuous numbers
\newtheorem*{theoremfive}{Theorem 7}
\newtheorem*{theoremsix}{Theorem 8}
\newtheorem*{theoremnine}{Theorem 9}
\newtheorem*{theoremten}{Theorem 10}
\newtheorem{proposition}{Proposition}% to get separate numbers for theorem and proposition etc.
\newtheorem{example}{Example}%
\newtheorem{remark}{Remark}%
\newtheorem{lemma}{Lemma}
\begin{document}

\title[Efficiency in a sum of sets]{Efficient points in a sum of sets of alternatives}

%%=============================================================%%
%% GivenName	-> \fnm{Joergen W.}
%% Particle	-> \spfx{van der} -> surname prefix
%% FamilyName	-> \sur{Ploeg}
%% Suffix	-> \sfx{IV}
%% \author*[1,2]{\fnm{Joergen W.} \spfx{van der} \sur{Ploeg} 
%%  \sfx{IV}}\email{iauthor@gmail.com}
%%=============================================================%%

%\author[]{Anas Mifrani*\vspace{-1em}}
%\date{\today}
%\address{*: Toulouse Mathematics Institute, University of Toulouse, France}
%\email{anas.mifrani@math.univ-toulouse.fr}
%\maketitle

%\let\thefootnote\relax
%\footnote[1]{text}
 %%%%%%%%%%
%\footnotetext{Email address: anas.mifrani@math.univ-toulouse.fr.}
%\footnotetext{ORCID: 0009-0005-1373-9028.}
%\footnotetext{Declarations of interest: none.}

\author[]{\fnm{Anas} \sur{Mifrani}\footnote{Toulouse Institute of Mathematics, University of Toulouse, F-31062 Toulouse Cedex 9, France. Email address: anas.mifrani@math.univ-toulouse.fr.\\ ORCID: 0009-0005-1373-9028.}}%\email{anas.mifrani@math.univ-toulouse.fr}
%\author[]{Test}
%\affil[]{\orgname{University of Toulouse}, \orgaddress{\city{Toulouse}, \postcode{F-31062}, \state{France}}}

%%==================================%%
%% Sample for unstructured abstract %%
%%==================================%%

\abstract{
The concept of efficiency plays a prominent role in the formal solution of decision problems that involve incomparable alternatives. This paper develops necessary and sufficient conditions for the efficient points in a sum of sets of alternatives to be identical to the efficient points in one of the summands. Some of the conditions cover both finite and infinite sets; others are shown to hold only for finite sets. Examples are provided that illustrate these findings.

%The problem originates in a theorem of Po-Lung Yu that asserts the equality for a specific choice of sets and a specific choice of relation. 
%We generalize Po-Lung Yu's theorem that the efficient points in a sum set, for a particular choice of sets, reduce to the efficient points in one of the summands. In doing so we also correct Yu's proof.
}

\keywords{Decision problem, Finite set, Mathematical optimization}
%%\pacs[JEL Classification]{D8, H51}

%%\pacs[MSC Classification]{35A01, 65L10, 65L12, 65L20, 65L70}

\maketitle

\section{Introduction}\label{sec1}

Let $G$ denote a set of alternatives and $R$ denote a relation for comparing pairs of alternatives. For every $x$ and $y$ in $G$, $xRy$ shall have the interpretation that $x$ is at least as good as $y$. Where $x$ is better than $y$, we shall write $xPy$, indicating that $xRy$ holds but that $yRx$ does not. If neither $xRy$ nor $yRx$ obtains for some $x, y \in G$, we say $x$ and $y$ are incomparable, and write $xIy$ or $yIx$. 

We shall concern ourselves with the set equality
\begin{equation}\tag{E}\label{E}\mathscr{E}(A + B) = \mathscr{E}(A),\end{equation}
where $A$ and $B$ are nonempty subsets of $G$, $A + B = \{a + b: \forall a \in A, \forall b \in B\}$ is the sum set (or Minkowski sum) of the two, and $\mathscr{E}(S)$ is the set of all \textit{efficient} points in a set $S \subseteq G$. We say a point $g \in G$ is efficient in $S$ if $g \in S$ and no $g' \in S$ exists such that $g'Pg$. The concept of efficiency plays a prominent role in economics, game theory, statistical decision theory, as well as in the formal analysis and solution of multiple objective optimization problems \citep{GEOFFRION1968618, benson1978existence, ganjali2024multi, mifrani2025linear}.

A special case of this equality appears in two influential monographs in the field of multiple criteria decision analysis. According to \cite{yu1985multiple} and \cite{ehrgott2005multicriteria}, when $A$ is an arbitrary subset of $\mathbb{R}^{q}$ and $B$ is the nonpositive orthant of $\mathbb{R}^{q}$ in the sense of the usual, ``larger-is-better", product order, which is given by $xRy \iff x_i \geqq y_i$ for all $i = 1, ..., q$ and $x, y \in \mathbb{R}^{q}$, equality (\ref{E}) holds. Stated otherwise, if $A$ is shifted by nonpositive amounts including zero, then the components of a point in $A$ can only deteriorate or remain intact, so that if the point is efficient in $A$ it is also efficient in the shifted set, and vice-versa. (Yu's justification of this important insight is partly mistaken; we return to this in Section \ref{sec3}.) %This insight figures prominently in Yu's investigation of the efficient points in the outcome space of a multiple objective optimization problem. It is used, for example, to show that these points are optimal solutions of a parametric family of ordinary mathematical programs when $A + B$ is convex \citep[p. 26]{yu1985multiple}.   

In this paper we shall not be interested in special cases per se, but rather in the kind of conditions that might be imposed on $A$, $B$ and $R$ to guarantee the equality. We shall also not try to work out applications, although it might be pointed out from the outset that the abovementioned Yu-Ehrgott result has helped elucidate the relationship between multiple objective and single-objective optimization problems. Indeed, it is well established that for a mathematical maximization problem involving $q$ criteria functions and a $\Lambda^{\leqq}$-convex value space\footnote{Let $\Lambda^{\leqq}$ denote the nonpositive orthant of $\mathbb{R}^{q}$. A set $Y \subseteq \mathbb{R}^{q}$ is said to be $\Lambda^{\leqq}$-convex if the sum set $Y + \Lambda^{\leqq}$ is convex.}, one consequence of the Yu-Ehrgott version of equality (\ref{E}) is that any efficient solution must maximize a nonnegative linear combination of the $q$ criteria \citep[p. 27]{yu1985multiple}. %been used to establish the well known property that if the value space of a multiple objective optimization problem is a $\Lambda^{\leqq}$-convex set\footnote{Let $\Lambda^{\leqq}$ denote the nonpositive orthant of $\mathbb{R}^{q}$. A set $Y \subseteq \mathbb{R}^{q}$ is said to be $\Lambda^{\leqq}$-convex if the sum set $Y + \Lambda^{\leqq}$ is convex.}, then any efficient solution to the problem must maximize a weighted linear combination of the $q$ criteria \citep[p. 27]{yu1985multiple}.  
Quite probably other areas exist in which sum sets arise where the equality can serve as a tool for structural investigation of efficiency (as described previously for multiple objective optimization) or as a means of dampening the computational effort required for finding the efficient points in a sum set. We will not be pursuing these ideas here, but we shall briefly return to them in our concluding remarks.  %. Instead, we will address the validity of the equality in as general a setting as possible. %We delineate some potential applications in Section \ref{sec4}. %other uses can be found for the equality in other contexts, including contexts where the relevant sum sets live not in the vectors (as in multiple objective optimization), but rather in the functions (as in 

So far as we can detect, efficiency in sum sets has been the subject of only a handful of publications. \cite{klamroth2024efficient} experiment with various methods for enumerating the efficient points in a finite sum set of vectors. They observe that for certain multiple objective integer programming problems\footnote{A multiple objective integer programming problem is a multiple objective optimization problem in which some or all of the variables are constrained to be integers.}, the efficient value set coincides with the efficient set of the sum of the efficient value sets of smaller, derivative problems, each defined over a distinct subset of the original variables. They also illustrate how the number of efficient points in a finite sum set ``may vary widely", ranging from less than the sum of the summands' cardinalities to the product thereof \cite[Lemma 2]{klamroth2024efficient}. In an earlier article, \cite{kerberenes2023computing} probed two ideas for accelerating the computation of efficient points in a finite sum set of vectors: Exploiting lexicographic orderings on the summands to ``filter" (i.e., test for efficiency) candidates as they are generated, and using ``bounding boxes" to discard whole regions of dominated sums. Computational experience reported in that article indicates that the two approaches may outstrip naive exhaustive search, with the box-based method proving especially effective in the presence of large dominated regions. A similar line of inquiry was pursued recently by \cite{funke2025pareto}, who concentrate on the special case where the summands are themselves efficient sets. It bears mention that outside of this predominantly experimental work, statements about the efficient points of a sum set occurred as lemmata in early developments in the field of multiple criteria decision analysis. We have already mentioned the standard works of \cite{yu1985multiple} and \cite{ehrgott2005multicriteria}, but \cite{moskowitz1975recursion}, for instance, draws on the observation that $\mathscr{E}(A + B) = \mathscr{E}(A + \mathscr{E}(B))$ for certain $A, B \subseteq \mathbb{R}^{q}$ in order to motivate a recursion algorithm for locating efficient decision functions among a finite set of statistical functions. \cite{white1980generalized} rectifies Moskowitz's proof of that identity, shows that it does not hold of any choice of $A$ and $B$ (and $R$), and generalizes it beyond the conditions stipulated by Moskowitz.    

%\cite{moskowitz1975recursion} introduced a method for determining admissible statistical functions among a finite set of functions   (see, e.g.,  \cite{moskowitz1975recursion},  \cite{white1980generalized}, \cite{klamroth2024efficient}, and \cite{kerberenes2021multiobjective})

This sampling of the literature gives us reason to believe that this paper carries the first general presentation and treatment of equality (\ref{E}). Section \ref{sec2} describes the assumptions of this work. In Section \ref{sec3}, an examination of the validity of (\ref{E}) under various conditions on $A$, $B$ and $R$ is undertaken. In the course of this examination, it will transpire, for example, that if $R$ is \textit{isotone} (a property to be defined), $\mathscr{E}(A)$ is nonempty, and $B$ contains an element which dominates a certain reference point in $G$, then (\ref{E}) fails (Theorem \ref{thm3}). We adduce examples to buttress the fact that some of the conditions fall short of being simultaneously necessary and sufficient. Our analysis will make it clear that, depending on whether the sets considered are finite or infinite, discrepant conclusions may follow as to the validity of certain theorems. %a discrepancy exists between how particular results on equality (\ref{E}) apply to finite sets and how the same results apply to infinite sets under otherwise identical assumptions. 
We summarize the paper and register some final remarks on the subject in Section \ref{sec4}.     

\section{Notation and assumptions}\label{sec2}

In the subsequent development, we assume $(G, +)$ to be a group of which $A, B$ are nonempty subsets. We do not require $G$ to be abelian, nor that either subset be a subgroup. We let $0_{G}$ represent the identity of $G$. For any nonzero integer $p \geqq 1$ and any $g \in G$, we let $pg$ signify the sum of $p$ copies of $g$ in $G$. When $p = 0$ we set $pg = 0_{G}$. For every $g \in G$, $-g$ shall denote the group inverse of $g$. Efficiency in any subset of $G$ is taken with respect to $R$, and we adopt the convention that $\mathscr{E}(\emptyset) = \emptyset$.

Borrowing slightly from the parlance of \cite{MacLane1999-zu}, we will say that $R$ is isotone if comparisons with respect to it are preserved under addition, that is, if $gRg'$ implies $(z + g)R(z + g')$ and $(g+z)R(g'+z)$ whenever $g, g', z \in G$. 

We assume throughout the exposition that $R$ is a reflexive relation, so that $xRx$ holds of every $x \in G$. Depending on the particular result, we may further assume $R$ to satisfy some or all of the following properties:

\begin{itemize}
\item[(P1)] $R$ is transitive: for every $x, y, z \in G$, if $xRy$ and $yRz$, then $xRz$;
\item[(P2)] $R$ is antisymmetric: for every $x, y \in G$, if $xRy$ and $yRx$, then $x = y$;
\item[(P3)] $R$ is isotone;
\item[(P4)] for each $m \geqq 1$ integers $p_1, \dots, p_m \geqq 1$ and all $b^1, \dots, b^m \in B$, \begin{equation*}(p_1b^1 + \dots + p_mb^{m})R0_{G}\end{equation*} implies that at least one $b^l$ satisfies $b^lR0_{G}$;
\item[(P5)] and for each $m \geqq 1$ integers $p_1, \dots, p_m \geqq 1$ and all $b^1, \dots, b^m \in B$, \begin{equation*}0_{G}R(p_1b^1 + \dots + p_mb^{m})\end{equation*} implies that at least one $b^l$ satisfies $0_{G}Rb^l$;
\end{itemize}

In many of the proofs of Section \ref{sec3}, it will suffice to work with the following simple implications of properties (P4) and (P5):
\begin{itemize}
\item[(P4')] For all $p \geqq 1$ and all $b \in B$, $(pb)R0_{G}$ implies $bR0_{G}$;
\item[(P5')] and for all $p \geqq 1$ and all $b \in B$, $0_{G}R(pb)$ implies $0_{G}Rb$.
\end{itemize}

Note that when any one of properties (P1), (P3), (P4) and (P5) holds of $R$, it also holds of $P$. Reflexivity does not hold of $P$, and nor does (P2). Also observe that the incomparability relation $I$ is symmetric in that $xIy$ implies $yIx$. 

The first two properties are commonplaces of modern utility theory \citep{white1972uncertain, luce1956semiorders}. There has been some debate over the extent to which they accurately capture the preference pattern of a rational decision maker\footnote{Almost any axiom of utility theory has come under attack for this reason; see, for example, \cite{luce1956semiorders} and the references therein. It should be noted, however, that even at the skeptical end of the controversy, commentators like Luce acknowledge the necessity of some of these properties for the development of utility theory-based disciplines. Regarding transitivity, for example, Luce observes that it is ``necessary if one is to have [a] numerical order preserving utility function", and that such functions ``seem indispensable for theories---such as game theory---which rest on preference orderings". He goes on to conclude that ``we too shall take the attitude that, at least for a normative theory, the preference relation should be transitive".}, but this should not be of concern to us. The adequacy of a property of $R$ for approximating real world behavior is too remote a question from the purposes of this paper. Given our wish to explore in as general a setting as possible what in our view is an unexplored theoretical problem, the intuitive grounds on which these properties rest, which are clear enough, are a compelling justification for considering them. %This, after all, is a mathematics paper, and the adequacy of a property of $R$ for representing human conduct is, as long as this property bears minimal resemblance to how humans make decisions in the real world, a matter of secondary importance. 
The only pertinent question, if one is to heed the dictum of the Scholastics in the construction of proper hypotheses\footnote{The dictum, which is sometimes misattributed to William of Ockham, is \textit{Entia non sunt multiplicanda praeter necessitatem}, which translates to ``Entities must not be multiplied beyond necessity". See \cite{beck1943principle} for a philosopher's account of how the dictum has come to be interpreted in the context of the sciences.}, is whether the results reported in this article \textit{necessitate} such properties. We shall not pursue this question here, but we expect it to be the object of future investigation.

%We are, of course, interested solely in equality (\ref{E}), and precisely how adequate a property of $R$ is for representing real world preference patterns is, as long as this property bears minimal resemblance to the real world, beyond the purview of our paper.

The intuitive basis for property (P3) is also clear. A relation satisfying (P1) and (P3) has been termed additive \citep{white1972uncertain}. Properties (P4') and (P5') can be understood as consistency requirements on how $R$ behaves when an element is repeatedly combined with itself. Here we might construe $0_G$ as the ``neutral" alternative. Property (P4') asserts that if several copies of an element $b$ 
are together regarded as being at least as good as $0_G$, then a single copy of $b$ must also enjoy that status. Property (P5') provides the converse safeguard: If $0_G$ is deemed at least as good as a collection of multiple copies of $b$, then it must also be at least as good as a single copy. These properties ensure that $R$ behaves stably 
under scaling and does not admit preference reversals when the same component is repeated.

%The last two properties are true of the standard product order in $\mathbb{R}^{q}$, and are therefore standard assumptions in multiple objective optimization.

Some of the proofs given in this paper make no use whatsoever of these properties. Which properties, if any, are used in a proof are indicated in the statement of the apposite proposition. 

Let us now consider our results.

\section{Conditions for the validity of equality (\ref{E})}\label{sec3}

%Tentatively, at least, the first three properties appear to conform to the preference pattern of a rational decision maker. Property (P1) means that if an alternative $x$ is at least as good as an alternative $y$, and $y$ is at least as good as an alternative $z$, then it follows that $x$ is at least as good as $z$. Property (P2) means that if $x$ is at least as good as $y$, and $x$ is different from $y$, then $y$ is not at least as good as $x$. Property (P3) means that if $x$ is at least as good as $y$, 

%As noted, not all the proofs given in this section employ all of these properties. Some proofs make no use whatsoever of the properties. Which properties, if any, are used in a proof are indicated in the statement of the relevant proposition. 

The results developed in this section bear on the domain of validity of equality (\ref{E}). For unencumbered perusal and for an overview of the most important theorems, the reader may wish to consult Table \ref{table1}.

\begin{table}[ht]
\centering
\begin{adjustbox}{max width=\textwidth}
\begin{tabular}{|c|p{2.25cm}|p{2cm}|p{3.5cm}|p{1.75cm}|p{2.75cm}|}
\hline
\textbf{Theorem} & \textbf{$R$} & \textbf{$A$} & \textbf{$B$} & \textbf{$A+B$} & \textbf{Does (E) hold?} \\
\hline
1 & (P3) & $\mathscr{E}(A)=\emptyset$ & n/a & n/a & Yes \\
\hline
2 & (P1)-(P3) & n/a & $0_G \in B$ and $0_GRb$ for all $b \in B$ & n/a & Yes \\
\hline
3 & (P3) & n/a & $\forall b \in B$, $0_GPb$ & $\mathscr{E}(A+B)\neq \emptyset$ & No \\
\hline
4 & (P3) & $\mathscr{E}(A)\neq \emptyset$ & $\exists b^\circ \in B$ with $b^{\circ}P0_G$ & n/a & No \\
\hline
5 & (P2)-(P5) & Finite \& stable & $B=\{b\}$ & n/a & If and only if $b=0_G$ \\
\hline
7 & (P1), (P3), (P4) & Finite & $B=\{b\}$ with $bI0_G$ & n/a & No \\
\hline
8 & (P1), (P3), (P4) & Finite & $B=\{b_1,\ldots,b_m\}$, $m\geqq 2$, $b_iI0_G$ & n/a & No \\
\hline
9 & (P1), (P3), (P5) & Finite \& stable & $B=\{0_G,b\}$ with $bI0_G$ & n/a & No \\
\hline
10 & (P1), (P3), (P5) & Finite \& stable & $B=\{0_G,b_1,\ldots,b_m\}$, $m\geqq 2$, $b_iI0_G$ & n/a & No \\
\hline
\end{tabular}
\end{adjustbox}
\caption{Summary of the principal findings of Section \ref{sec3} when $A+B \neq A$. Columns two through four tabulate the assumptions made on each of $R$, $A$, $B$ and $A+B$ in the corresponding theorem.}
\label{table1}
\end{table}

We begin with two theorems that furnish conditions sufficient for the equality to hold. Theorem \ref{thm0} offers a test of validity that merely involves inspecting the efficient subset of $A$.  

\begin{theorem}
\label{thm0}
Under (P3), $\mathscr{E}(A) = \emptyset$ implies that (\ref{E}) holds.
\end{theorem}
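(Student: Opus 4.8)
The plan is to notice that the hypothesis $\mathscr{E}(A) = \emptyset$ collapses (\ref{E}) into the single assertion $\mathscr{E}(A+B) = \emptyset$, so it suffices to show that no point of $A+B$ is efficient. I would therefore fix an arbitrary $c \in A+B$, write it as $c = a+b$ with $a \in A$ and $b \in B$ (both sets being nonempty, such a decomposition exists and $A+B$ is itself nonempty), and aim to exhibit a point of $A+B$ that strictly dominates $c$.

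The dominating point comes from the emptiness of $\mathscr{E}(A)$ itself: since $a \in A$ fails to be efficient, there exists $a' \in A$ with $a'Pa$, that is, $a'Ra$ holds but $aRa'$ does not. Applying the right-addition clause of isotonicity (P3) with $z = b$ to the comparison $a'Ra$ yields $(a'+b)R(a+b)$, and because $a'+b \in A+B$ this already delivers half of what is required.

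The crux is to promote this weak comparison to strict domination, i.e.\ to rule out $(a+b)R(a'+b)$. Here I would exploit the group structure of $G$, which is the one ingredient isotonicity alone does not provide: assuming $(a+b)R(a'+b)$ and adding $-b$ on the right via (P3) gives $\bigl((a+b)+(-b)\bigr)R\bigl((a'+b)+(-b)\bigr)$, which simplifies through associativity and the inverse law to $aRa'$, contradicting $a'Pa$. Hence $(a'+b)P(a+b)$, so $c$ is not efficient. As $c$ was arbitrary, $\mathscr{E}(A+B) = \emptyset = \mathscr{E}(A)$.

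I expect the main obstacle to be precisely this last step: isotonicity is only a one-directional (monotone) compatibility between $R$ and $+$, so by itself it cannot transfer the \emph{strictness} of $a'Pa$ up to $a'+b$. The cancellation argument using the inverse $-b$ is what closes that gap, and it is the reason the group hypothesis on $G$—rather than mere closure under addition—is genuinely needed.
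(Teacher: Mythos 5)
Your proof is correct and follows essentially the same route as the paper: both obtain $a' \in A$ with $a'Pa$ from the emptiness of $\mathscr{E}(A)$ and use isotonicity (P3) to transfer the strict domination to $(a'+b)P(a+b)$ in $A+B$, the only cosmetic difference being that you argue directly rather than by contradiction. Your cancellation step with $-b$ is exactly the justification the paper leaves implicit in its blanket remark that (P3) holds of $P$ whenever it holds of $R$, so spelling it out is a virtue, not a divergence.
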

\begin{proof}
Let us assume that $\mathscr{E}(A) = \emptyset$, and suppose, for the sake of contradiction, that $\mathscr{E}(A+B) \neq \emptyset$. We may therefore select $z = a +b \in A+B$, $a \in A$, $b \in B$, such that $z \in \mathscr{E}(A+B)$. Since $A$ has no efficient points, $a \notin \mathscr{E}(A)$, and there exists $a' \in A$ such that $a'Pa$. By (P3), this implies that $(a'+b)Pz$, a contradiction of the fact that $z \in \mathscr{E}(A+B)$. Consequently, $\mathscr{E}(A+B) = \emptyset = \mathscr{E}(A)$.
\end{proof}

\begin{theorem}
\label{thm1}
Under (P1)-(P3), if $0_{G} \in B$ and $0_{G} R b$ whenever $b \in B$, then (\ref{E}) holds.
\end{theorem}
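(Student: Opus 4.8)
The plan is to prove the two set inclusions of (\ref{E}) separately, and the organizing observation is that $0_{G} \in B$ forces $A = A + 0_{G} \subseteq A + B$. This containment disposes of the easy half at once: efficiency is inherited by subsets, so any point that is efficient in the larger set $A+B$ and happens to lie in $A$ is automatically efficient in $A$. Thus the whole content of the inclusion $\mathscr{E}(A+B) \subseteq \mathscr{E}(A)$ is the claim that every efficient point of $A+B$ in fact sits inside $A$.

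To establish that claim, I would take $z = a + b \in \mathscr{E}(A+B)$ with $a \in A$ and $b \in B$. Since $0_{G}Rb$ by hypothesis, isotonicity (P3) applied with left addition of $a$ yields $(a + 0_{G})R(a+b)$, that is, $aRz$. Now $a \in A \subseteq A+B$, and because $z$ is efficient in $A+B$ no element of $A+B$ can strictly dominate $z$; in particular $a$ does not, so $aPz$ fails. Combined with $aRz$ this forces $zRa$, whence antisymmetry (P2) gives $z = a \in A$. The subset remark then completes this inclusion.

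The substantive direction is $\mathscr{E}(A) \subseteq \mathscr{E}(A+B)$, and I expect the real work to lie here, since enlarging the ground set can in principle create new dominators. Let $a \in \mathscr{E}(A)$, so $a = a + 0_{G} \in A+B$, and suppose toward a contradiction that some $z = a' + b \in A+B$ satisfies $zPa$. Using $0_{G}Rb$ and isotonicity once more I obtain $a'Rz$; from $zPa$ I read off $zRa$; and transitivity (P1) then delivers $a'Ra$. The delicate step is upgrading this weak relation to the strict preference $a'Pa$: I would rule out $aRa'$ by observing that $aRa'$ together with $a'Rz$ would give $aRz$ by transitivity, flatly contradicting $zPa$. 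Hence $aRa'$ fails, so $a'Pa$ with $a' \in A$, contradicting $a \in \mathscr{E}(A)$.

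The only genuinely delicate point, and the place where all three hypotheses (P1)--(P3) must cooperate, is this conversion of a chain of weak comparisons into the strict preference needed to contradict efficiency; isotonicity supplies the comparison between $a'$ (resp.\ $a$) and the candidate $z$, transitivity chains the comparisons together, and antisymmetry is what pins the efficient point of $A+B$ down to $A$ in the other direction. Everything else is routine, so no separate treatment of finite and infinite sets is required.
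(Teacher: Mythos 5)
Your proof is correct, and while it shares the paper's overall skeleton---two inclusions, with isotonicity (P3) comparing $a$ with $a+b$, transitivity (P1) chaining comparisons, and antisymmetry (P2) pinning points down---it organizes the inclusion $\mathscr{E}(A+B) \subseteq \mathscr{E}(A)$ in a genuinely different and tidier way. The paper argues by contradiction with a case split on whether the efficient point $w^{\circ} = a^{\circ} + b^{\circ}$ of $A+B$ lies in $A$, and in the case $w^{\circ} \notin A$ it needs a group-inverse computation ($0_{G} = -a^{\circ} + (a^{\circ} + b^{\circ}) = b^{\circ}$) to establish $a^{\circ} \neq w^{\circ}$ before it can manufacture a strict dominator. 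You instead prove directly that every $z = a + b \in \mathscr{E}(A+B)$ satisfies $z = a \in A$: isotonicity gives $aRz$, efficiency of $z$ rules out $aPz$ and so forces $zRa$, and (P2) yields $z = a$; heredity of efficiency under the containment $A \subseteq A+B$ (guaranteed by $0_{G} \in B$) then finishes, removing both the case analysis and any use of inverses. In the other direction, your contradiction argument (upgrading $a'Ra$ to $a'Pa$ by noting that $aRa'$ together with $a'Rz$ would give $aRz$, contradicting $zPa$) uses only (P1) and (P3), whereas the paper's part (1) also invokes (P2); and since both of your inclusions are proved element-wise, the paper's preliminary appeal to Theorem \ref{thm0} for the case $\mathscr{E}(A) = \emptyset$ becomes unnecessary, as the inclusions hold vacuously. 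The trade-off is minor: the paper's part (1) establishes the slightly stronger fact that nothing in $A+B$ other than $w^{\circ}$ itself even weakly dominates $w^{\circ}$, but for the theorem as stated your route is the more economical one.
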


\begin{proof}
If $\mathscr{E}(A) = \emptyset$, then we are done as per Theorem \ref{thm0}. Suppose in what follows that $\mathscr{E}(A) \neq \emptyset$. The proof is in two parts.

(1) %If $\mathscr{E}(A) = \emptyset$, then $\mathscr{E}(A) \subseteq \mathscr{E}(A+B)$. Otherwise, 
Let $w^{\circ} \in \mathscr{E}(A)$. Observe that $w^{\circ} = w^{\circ} + 0_{G} \in A + B$, since $w^{\circ} \in A$ and $0_{G} \in B$. Suppose that, for some $w = a + b \in A + B$, $wRw^{\circ}$. Then, from (P1), (P3) and the fact that $0_{G}Rb$, we discover that $aRw^{\circ}$. Because $a \in A$ and $w^{\circ} \in \mathscr{E}(A)$, this means $a = w^{\circ}$. Furthermore, (P3) entitles us to infer that $aRw$ from the statement that $0_GRb$. Therefore, $w^{\circ}Rw$. By (P2), $w = w^{\circ}$. %whence $w^{\circ}Rw$ and therefore, by (P2), $w = w^{\circ}$.

Thus we have demonstrated that for all $w \in A+B$, $wRw^{\circ}$ implies $w = w^{\circ}$. Inasmuch as $w^{\circ} \in A + B$, it follows that $w^{\circ} \in \mathscr{E}(A+B)$, so that $\mathscr{E}(A) \subseteq \mathscr{E}(A+B)$. 

(2) If $\mathscr{E}(A+B) = \emptyset$, then $\mathscr{E}(A+B) \subseteq \mathscr{E}(A)$. Otherwise, let $w^{\circ} \in \mathscr{E}(A+B)$ and assume, contrary to the theorem, that $w^{\circ} \notin \mathscr{E}(A)$. Two cases must be considered: $w^{\circ} \in A$, and $w^{\circ} \notin A$. In both cases, we may write $w^{\circ} = a^{\circ} + b^{\circ}$ for some $a^{\circ} \in A$ and $b^{\circ} \in B$.

If $w^{\circ} \in A$, then, in view of the fact that $w^{\circ} \notin \mathscr{E}(A)$, there exists $a \in A$ such that $aRw^{\circ}$ obtains and $w^{\circ}Ra$ does not. However, $a = a + 0_{G}$ being a point in $A + B$, this contradicts the efficiency of $w^{\circ}$ in $A+B$.

%$a \neq w^{\circ}$. However, $a = a + 0_{G}$ being a point in $A + B$, this contradicts the efficiency of $w^{\circ}$ in $A+B$.

If $w^{\circ} \notin A$, then $b^{\circ} \neq 0_{G}$, for the reverse would clearly imply $w^{\circ} \in A$. Now, since $0_{G} R b^{\circ}$, it follows from (P3) that $a^{\circ} R w^{\circ}$. Moreover, we have that $a^{\circ} \neq w^{\circ}$. To see why, suppose the opposite were true. Then, recalling that $G$ is a group and that $a^{\circ}$ admits an inverse element $-a^{\circ}$, we obtain $0_{G} = -a^{\circ} + a^{\circ} = -a^{\circ} + (a^{\circ} + b^{\circ}) = (-a^{\circ} + a^{\circ}) + b^{\circ} = b^{\circ}$, a contradiction. Therefore, $a^{\circ}Rw^{\circ}$ with $a^{\circ} \neq w^{\circ}$. Then, since property (P2) operates, $w^{\circ}Ra^{\circ}$ must not hold. (Otherwise, we would have $a^\circ = w^\circ$.) In conclusion, we have shown that $a^{\circ}Rw^{\circ}$ is true but that $w^{\circ}Ra^{\circ}$ is not. However, this patently violates the presupposed efficiency of $w^\circ$ in $A+B$, as $a^{\circ} = a^{\circ} + 0_{G} \in A+B$.

It follows from the previous two paragraphs that $w^{\circ} \in \mathscr{E}(A)$, and therefore that $\mathscr{E}(A+B) \subseteq \mathscr{E}(A)$.

The combination of parts (1) and (2) of this argument yields the requisite equality.
\end{proof}

\begin{remark}
\label{rem_generalized_equality}
The following extension of Theorem \ref{thm1} can be established fairly easily using induction. Under our hypotheses on $R$, if $B_1, ..., B_n$ are $n$ nonempty subsets of $G$ such that $0_{G} \in B_i$ and $0_{G}Rb_i$ whenever $b_i \in B_i$, for each $i = 1, ..., n$, then \begin{equation*}\mathscr{E}\biggl(A + \sum_{i = 1}^{n}B_i\biggr) = \mathscr{E}(A),\end{equation*}
where $\sum_{i = 1}^{n}B_i = \{b_1 + ... + b_n: b_1 \in B_1, ..., b_n \in B_n\}$. 
\end{remark}

We remarked in Section \ref{sec1} that \cite{yu1985multiple} and \cite{ehrgott2005multicriteria} independently aver a special case of equality (\ref{E}) where $G = \mathbb{R}^{q}$, $R$ is the conventional product order defined in Section \ref{sec1}, $A \subseteq \mathbb{R}^{q}$ and $B = \{d \in \mathbb{R}^{q}: 0Rd\}$. It can be seen from this choice of sets and relation that Yu and Ehrgott's result is in fact a direct corollary of Theorem \ref{thm1}. We must, however, put on record that the proof which Yu proposes is partly fallacious. To demonstrate that $\mathscr{E}(A+B) \subseteq \mathscr{E}(A)$, he---like Ehrgott---assumes as a \textit{reductio ad absurdum} that a point $y^{\circ} \in \mathscr{E}(A + B)$ exists which lies outside $\mathscr{E}(A)$. From this he infers that $y^{\circ}$ must be dominated in $A$: that there must exist $y \in A$ such that $yPy^{\circ}$. But if $y^{\circ} \notin \mathscr{E}(A)$, and all we know is that $y^{\circ} \in \mathscr{E}(A + B)$, then either $y^{\circ} \in A$ and $y^{\circ}$ is indeed dominated in $A$, or $y^{\circ} \notin A$ and the status of $y^{\circ}$ vis-à-vis the elements of $A$ is unclear. Yu overlooks the latter possibility, but nevertheless reaches the same conclusion as Ehrgott, who does take both possibilities into account.

The conditions of Theorem \ref{thm1} are sufficient but not necessary, as highlighted by the next examples. Example \ref{ex1} deals with a case where $A+B=A$, whereas in Example \ref{ex2}, $A+B \neq A$.

\begin{example}
\label{ex1}
Let $G = S_3$ be the symmetric group on $\{1,2,3\}$ under composition. The operation $+$ here denotes permutation composition. Let $A = \{(12),(13)\}$ and $B = \{(123)\}$. Define a relation $R$ on $G$ by
\[
\pi R \sigma \;\; \Longleftrightarrow \;\; 
\pi \text{ has at least as many fixed points as } \sigma.
\]
This reflexive relation fulfills properties (P1) and (P3) to the exclusion of property (P2). To see why property (P2) does not hold of $R$, consider, for example, that $(12)R(13)$ and $(13)R(12)$, yet $(12) \neq (13)$. Notice also that $0_G = \mathrm{id} \notin B$.

However, for each $\pi \in A$, the composition $\pi + (123)$ is again a transposition in $A$
(for instance, $(12)+(123) = (13)$), so that $A+B=A$
and $\mathscr{E}(A+B) = \mathscr{E}(A)$.
\end{example}

\begin{example}
\label{ex2}
Let $G = \mathbb{R}^{2}$, $A = \{(x, y): y = -2x\}$ and $B = \{(-1, 2), (-1, 1)\}$. The relation $R$ is taken to be the standard product order on $\mathbb{R}^{2}$ (see Section \ref{sec1}). Properties (P1)-(P3) are satisfied, but neither condition in Theorem \ref{thm1} obtains. Furthermore, the sum set $A+B$ can easily be shown to be given by \begin{equation*}A + B = A \cup \{(x-1, y+1): (x, y) \in A\}.\end{equation*}
Considering that the sets $A$ and $\{(x-1, y+1): (x, y) \in A\}$ do not intersect (this is readily verifiable), equality (\ref{E}) will follow immediately from the fact, which we shall now elucidate, that for each point $z$ in $\{(x-1, y+1): (x, y) \in A\}$ there is a corresponding point $z'$ in $A$ such that $z'Pz$. Indeed, if $(x, y) \in A$, then $(x-1, -2x+2)P(x-1, y+1)$, with $(x-1, -2x+2) = (x-1, -2(x-1)) \in A$. Consequently, no point in $\{(x-1, y+1): (x, y) \in A\}$ can be efficient in $A + B$, whence 
\begin{equation*}\mathscr{E}(A+B) = \mathscr{E}\biggl(A \cup \{(x-1, y+1): (x, y) \in A\}\biggr) = \mathscr{E}(A).\end{equation*}
\end{example}

The next two theorems delineate situations when equality (\ref{E}) is violated. \textit{In enunciating these theorems we assume $A+B \neq A$} so as to avoid trivialities. When $A+B = A$, the equality obtains irrespective of the conditions imposed on $A$ or $B$.

\begin{theorem}
\label{thm2}
Assume that $\mathscr{E}(A + B)$ is nonempty. Under (P3), if $0_{G}Pb$ for all $b \in B$, then (\ref{E}) fails.
\end{theorem}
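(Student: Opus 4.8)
The plan is to disprove the equality by exhibiting a single witness: a point lying in $\mathscr{E}(A+B)$ but not in $\mathscr{E}(A)$. Since $\mathscr{E}(A+B)$ is assumed nonempty, I would start by choosing any $z \in \mathscr{E}(A+B)$ and writing $z = a + b$ with $a \in A$ and $b \in B$.

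The crux of the argument is to show that the summand $a$ strictly dominates $z$, i.e.\ that $aPz$. The forward comparison $aRz$ is immediate: since $0_{G}Rb$ (the weak part of $0_{G}Pb$), left-addition of $a$ under (P3) gives $(a + 0_{G})R(a + b)$, that is, $aRz$. To obtain the strict part I must rule out $zRa$. Were $zRa$ to hold, i.e.\ $(a+b)R(a+0_{G})$, then left-adding the group inverse $-a$ and applying (P3) once more would yield $(-a+(a+b))R(-a+a)$, which simplifies to $bR0_{G}$. This contradicts $0_{G}Pb$, whose strict part asserts precisely that $bR0_{G}$ does not hold. Hence $zRa$ fails, and combined with $aRz$ we conclude $aPz$.

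With $aPz$ and $a \in A$ secured, I would finish by a brief case distinction on whether $z \in A$. If $z \notin A$, then $z \notin \mathscr{E}(A)$ at once, since every efficient point of a set belongs to that set. If instead $z \in A$, the strict domination $aPz$ by the element $a \in A$ again forces $z \notin \mathscr{E}(A)$. In either case $z \in \mathscr{E}(A+B) \setminus \mathscr{E}(A)$, so the two efficient sets differ and (\ref{E}) fails. It is worth noting that this argument also re-derives the standing assumption $A+B \neq A$ under the present hypotheses.

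The step I expect to be the main obstacle is the strict part of $aPz$, namely the refutation of $zRa$. While $aRz$ follows directly from isotonicity, disposing of $zRa$ hinges on being able to cancel $a$ on the left, which is exactly why the group structure of $(G,+)$ together with the two-sided formulation of (P3) is indispensable here. Once $aPz$ is in hand, the remaining steps are routine.
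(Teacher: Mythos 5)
Your proof is correct and takes essentially the same route as the paper's: pick an efficient $z = a+b \in \mathscr{E}(A+B)$, use (P3) to obtain $aPz$, and conclude $z \notin \mathscr{E}(A)$ whether or not $z \in A$, so the two efficient sets differ. The only difference is cosmetic --- where the paper invokes its standing observation that (P3) carries over from $R$ to $P$, you re-derive the strict part of $aPz$ explicitly via left-cancellation by $-a$, which is precisely the argument underlying that observation.
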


\begin{proof}
Let us suppose that $0_{G}Pb$ for all $b \in B$. Let $z = a + b \in A+B$ be an efficient point in $A+B$. The only case of interest is when $z \in A$, because $z \notin A$ implies by definition that $z$ is inefficient in $A$. Let us assume then that $z \in A$. Thanks to property (P3), which also holds of $P$, $aPz$. As a result, $z \notin \mathscr{E}(A)$, and $\mathscr{E}(A + B) \neq \mathscr{E}(A)$.
\end{proof}

%The assumption that $\mathscr{E}(A+B)$ is nonempty is indispensable to Theorem \ref{thm2}, as illustrated by the next example.

%\begin{example}
%Let $G = \mathbb{R}^2$, $A = \{(x, y): y = 2x\}$ and $B = \{(-1, -2)\}$, and endow $G$ with tfhe same relation $R$ as that of Example \ref{ex2}.  
%\end{example}

\begin{theorem}
\label{thm3}
Assume that $\mathscr{E}(A)$ is nonempty. Under (P3), if there exists a point $b^{\circ} \in B$ such that $b^{\circ}P0_{G}$, then (\ref{E}) fails. 
\end{theorem}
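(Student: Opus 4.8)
The plan is to exhibit a single witness point lying in $\mathscr{E}(A)$ but not in $\mathscr{E}(A+B)$, which immediately forces the two sets to differ. First I would use the hypothesis $\mathscr{E}(A) \neq \emptyset$ to fix some $w^{\circ} \in \mathscr{E}(A)$, and then consider the point $w^{\circ} + b^{\circ} \in A + B$, where $b^{\circ}$ is the element supplied by the hypothesis, satisfying $b^{\circ}P0_{G}$.

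The crux of the argument is to show that $w^{\circ} + b^{\circ}$ strictly dominates $w^{\circ}$. Since (P3) is inherited by $P$ (as noted in Section \ref{sec2}), the relation $b^{\circ}P0_{G}$ yields $(w^{\circ} + b^{\circ})P(w^{\circ} + 0_{G})$. Because $0_{G}$ is the group identity, $w^{\circ} + 0_{G} = w^{\circ}$, so that $(w^{\circ} + b^{\circ})Pw^{\circ}$.

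With this domination in hand, I would argue that $w^{\circ} \notin \mathscr{E}(A+B)$ by a short case split. If $w^{\circ} \in A + B$, then $w^{\circ}$ is dominated in $A+B$ by the point $w^{\circ} + b^{\circ} \in A+B$, and so cannot be efficient there. If instead $w^{\circ} \notin A + B$, then $w^{\circ}$ fails to be efficient in $A+B$ for the trivial reason that $\mathscr{E}(A+B) \subseteq A+B$. The second case is genuinely possible here, since we do not assume $0_{G} \in B$, and it is the only place where a little care is required. In either event $w^{\circ} \in \mathscr{E}(A) \setminus \mathscr{E}(A+B)$, whence $\mathscr{E}(A+B) \neq \mathscr{E}(A)$ and (\ref{E}) fails.

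I do not anticipate any serious obstacle: the proof uses nothing beyond (P3) transferred to $P$ and the definition of efficiency. It is worth noting, however, why the standing assumption $A + B \neq A$ is indispensable. Were $A + B = A$, the same domination $(w^{\circ} + b^{\circ})Pw^{\circ}$ with $w^{\circ} + b^{\circ} \in A$ would contradict $w^{\circ} \in \mathscr{E}(A)$; indeed, every point of $A$ would then be dominated by itself plus $b^{\circ}$, forcing $\mathscr{E}(A) = \emptyset$ and rendering the hypotheses inconsistent. The assumption $A + B \neq A$ is exactly what excludes this degeneracy.
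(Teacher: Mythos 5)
Your proof is correct and takes essentially the same route as the paper's: both fix an efficient point $w^{\circ} \in \mathscr{E}(A)$, use (P3) transferred to $P$ to derive $(w^{\circ}+b^{\circ})Pw^{\circ}$ with $w^{\circ}+b^{\circ} \in A+B$, and dismiss the case $w^{\circ}\notin A+B$ as trivially implying $w^{\circ}\notin\mathscr{E}(A+B)$. Your closing remark that the hypotheses by themselves already rule out $A+B=A$ is a sound observation the paper does not spell out, but it does not change the argument.
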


\begin{proof}
Suppose that such a point $b^{\circ}$ exists. We have assumed that $\mathscr{E}(A) \neq \emptyset$. Accordingly, let $a \in A$ be an efficient element in $A$. For the reason explained in the last proof, we need only examine the case where $a \in A+B$. If $a \in A+B$, then, because $a+b^{\circ} \in A+B$ and property (P3) yields $(a+b^{\circ})Pa$, $a$ is inefficient in $A+B$. Thus, $\mathscr{E}(A + B) \neq \mathscr{E}(A)$. 
\end{proof}

Theorems \ref{thm2} and \ref{thm3} may perhaps be usefully rephrased as follows. If $\mathscr{E}(A+B)$ is nonempty, then (\ref{E}) can only be true if there exists a $b^{\circ} \in B$ such that $0_{G}Pb^{\circ}$ does not obtain. Moreover, if $\mathscr{E}(A)$ is nonempty, then (\ref{E}) can only hold if no $b \in B$ satisfies $bP0_{G}$.

In keeping with standard phraseology, we shall call a set \textit{stable} if it is own efficient set and \textit{unstable} otherwise.

\begin{lemma}
\label{lem1}
Under (P2) and (P3), if $A$ is stable and $B$ is a singleton $\{b\}$, $b \in G$, then $A+B$ is stable.
\end{lemma}

\begin{proof}
Let $z, z' \in A+B$ such that $zRz'$, $z = a + b$ and $z' = a' + b$, where $a, a' \in A$. Replicating the calculations involving inverse elements and property (P3) in the proof of Theorem \ref{thm1} produces the conclusion that $aRa'$. We have assumed that $A$ is stable. Therefore, $a' \in \mathscr{E}(A)$ and $a'Ra$ (on account of $aRa'$). By property (P2), $a = a'$ and hence $z = z'$. The reflexivity of $R$ requires that $z'Rz$. To summarize, we have proven that whenever $zRz'$ holds of some $z, z' \in A+B$, $z'Rz$. Ergo, every member of $A+B$ is efficient in $A+B$. %$\mathscr{E}(A+B) = A+B$.
\end{proof}

\begin{lemma}
\label{lem2}
Under (P2), (P4) and (P5), if $A$ is finite and $B$ is a singleton $\{b\}$, $b \in G$, then $A+B = A$ if and only if $b = 0_{G}$. 
\end{lemma}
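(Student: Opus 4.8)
The ``if'' direction is immediate, since $b = 0_{G}$ gives $A + B = \{a + 0_{G} : a \in A\} = A$. All the substance lies in the converse, and the plan is to convert the set equality $A + \{b\} = A$ into a statement about a permutation of the \emph{finite} set $A$, and then to collapse that statement down to $b$ itself using the scaling properties together with antisymmetry.

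First I would introduce the right-translation map $\phi : A \to G$ defined by $\phi(a) = a + b$. The hypothesis $A + \{b\} = A$ says precisely that $\phi(a) \in A$ for every $a \in A$, so $\phi$ is a self-map of $A$; and cancellation in the group $G$ (if $a + b = a' + b$ then $a = a'$) makes $\phi$ injective. Because $A$ is finite, an injective self-map of $A$ is automatically surjective, so $\phi$ is a permutation of $A$ and therefore has finite order: there exists an integer $p \geqq 1$ with $\phi^{p} = \mathrm{id}_{A}$.

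The next step is to read off what $\phi^{p} = \mathrm{id}_{A}$ says about $b$. Iterating $\phi$ appends a copy of $b$ on the right at each stage, so $\phi^{p}(a) = a + pb$; this computation remains valid even though $G$ need not be abelian, precisely because $b$ is always added on the same side. Fixing any $a \in A$ (the set is nonempty) and applying cancellation to $a + pb = \phi^{p}(a) = a$ yields $pb = 0_{G}$.

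Finally I would invoke the properties of $R$. Since $pb = 0_{G}$, reflexivity gives both $(pb)R0_{G}$ and $0_{G}R(pb)$. Property (P4') then forces $bR0_{G}$, property (P5') forces $0_{G}Rb$, and antisymmetry (P2) converts these two comparisons into $b = 0_{G}$, as required. The one point demanding care is the passage through the finiteness of $A$: it is exactly what guarantees that the injective map $\phi$ is a bijection of finite order, and hence that some positive multiple of $b$ returns to the identity. (If $A$ were allowed to be infinite this step would break down, which is presumably why the hypothesis singles out finite $A$.) Everything after $pb = 0_{G}$ is a short chain of implications through (P4'), (P5') and (P2).
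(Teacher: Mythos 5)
Your proof is correct and follows essentially the same route as the paper's: both arguments recognize the translation $a \mapsto a+b$ as a permutation of the finite set $A$, extract a power $p \geqq 1$ with $a + pb = a$ to get $pb = 0_{G}$, and then combine reflexivity of $R$ with (P4'), (P5') and antisymmetry (P2) to conclude $b = 0_{G}$. Your explicit remark that adding $b$ on one fixed side keeps the computation valid in a non-abelian $G$ is a nice touch the paper leaves implicit, but it does not change the argument.
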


\begin{proof}
Write $A = \{a^1, ..., a^n\}$ for $n \geqq 1$ distinct points $a^1, ..., a^n$ in $G$. The subscript $i$ in $a^i$ is not to be confused with a power. The \textit{if} portion of the statement is self-evident. To prove the \textit{only if} portion, we assume that $A+B = A$ and observe that the translation $T: a \mapsto a+b$ defines a one-to-one (surjective) map on $A$. A one-to-one map on a finite set, which $A$ is, happens also to be onto. Hence, $T$ is a permutation of $A$, which means that there exists an integer $p \geqq 1$ such that for each $a \in A$, $a + pb \in A$. For any $a \in A$, therefore, we have that $a + pb = a$, meaning $pb = 0_{G}$. Now, $R$ being reflexive implies that $pbR0_{G}$ and $0_{G}Rpb$. Since properties (P4) and (P5) hold, it follows that $bR0_{G}$ and $0_{G}Rb$, and property (P2) yields the requisite conclusion that $b = 0_{G}$.   

%If $n = 1$, then $a^{1} + b = a^{1}$, whence $b = -a^{1} + a^{1} = 0_G$. Suppose, for the remainder of the proof, that $n \geqq 2$. Then, for every index $i = 1, ..., n$ there exists an index $j = 1, ..., n$ such that $a^{i}+b = a^{j}$; this index shall be denoted with $j_i$ to underscore its dependency on $i$. 
\end{proof}

\begin{remark}
\label{rem_revision_1}
%LOOK INTO THIS EXAMPLE
Properties (P4) and (P5), or some equivalent conditions, seem to be vital for Lemma \ref{lem2}, for consider the following counterexample where $R$ is defined by $xRy \iff x=y$. Let $G = \mathbb{Z}/5\mathbb{Z} = \{[0],[1],[2],[3],[4]\}$ be the integers modulo $5$ under addition, $A = G$ and $b = [1]$. The identity element of $G$ is $[0] \neq b$, yet it is plainly the case that $A+\{b\} = G + \{[1]\} = G = A$. Neither property (P4) nor property (P5) (which, incidentally, are equivalent since $R$ is a symmetric relation) holds of $R$, because $5b = [5] = [0]$ and $b\neq [0]$.  

%We do not have property (P4), for example, because $A+\{5b\} = A$ yet $A+\{b\} = \{[1]\} \neq A$. It is also plain that $A+\{b\} = G + \{[1]\} = G = A$.  
\end{remark}

Taken together, Lemmata \ref{lem1} and \ref{lem2} reveal an interesting feature of equality (\ref{E}). If properties (P2)-(P5) operate, $A$ is a finite stable set and $B$ is a singleton, then equality (\ref{E}) holds if and only if $b = 0_G$, that is, if and only if $A+B=A$.

\begin{theorem}
\label{cor1}
Assume that (P2)-(P5) hold and that $A$ is finite. If $A$ is stable and $B$ is a singleton $\{b\}$, then (\ref{E}) holds if and only if $A+B=A$, if and only if $b = 0_G$. 
\end{theorem}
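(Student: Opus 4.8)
The plan is to read the statement as a chain of two equivalences, \eqref{E} $\iff A+B=A \iff b=0_{G}$, and to dispatch each one using the two preceding lemmata together with the standing hypothesis that $A$ is stable. The rightmost equivalence, $A+B=A$ if and only if $b=0_{G}$, is precisely Lemma \ref{lem2}, whose hypotheses---(P2), (P4), (P5), and finiteness of $A$---all sit among the assumptions of the theorem, so nothing new is needed there. It then remains only to establish the middle equivalence: that \eqref{E} holds if and only if $A+B=A$.

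For this I would first record the trivial direction. If $A+B=A$, then the two sets appearing in \eqref{E} are literally the same set, hence so are their efficient subsets, and \eqref{E} holds without invoking any property of $R$. The converse is where the stability hypotheses do the real work. Because $A$ is assumed stable, $\mathscr{E}(A)=A$. Under (P2) and (P3)---again available---Lemma \ref{lem1} guarantees that $A+B$ is stable as well, so $\mathscr{E}(A+B)=A+B$. Substituting these two identities into the efficiency equation $\mathscr{E}(A+B)=\mathscr{E}(A)$ collapses it directly into the set equation $A+B=A$. This single observation---that stability lets one replace each efficient set by the set itself---is the crux linking the efficiency statement to the purely set-theoretic one. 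Concatenating the pieces yields the full chain of equivalences asserted by the theorem.

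I do not anticipate a genuine obstacle, since the result is in essence a repackaging of Lemmata \ref{lem1} and \ref{lem2}. The only points warranting a moment's care are, first, to confirm that the property-set invoked by each lemma is contained in the theorem's hypothesis (P2)--(P5), so that both lemmata are legitimately applicable; and second, to be explicit that the passage from $\mathscr{E}(A+B)=\mathscr{E}(A)$ to $A+B=A$ rests on the stability of \emph{both} sets, the stability of $A+B$ being \emph{derived} from Lemma \ref{lem1} rather than assumed outright. Making that dependence visible is what keeps the otherwise short argument fully rigorous.
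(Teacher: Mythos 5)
Your proof is correct and is essentially the paper's own argument: the paper obtains Theorem \ref{cor1} precisely by combining Lemma \ref{lem1} (which, under (P2)--(P3), makes $A+B$ stable, so that together with the stability of $A$ the equality (\ref{E}) collapses to the set identity $A+B=A$) with Lemma \ref{lem2} (which, under (P2), (P4), (P5) and finiteness of $A$, identifies $A+B=A$ with $b=0_G$). Your explicit attention to which properties each lemma consumes and to the fact that stability of $A+B$ is derived rather than assumed matches the intended reasoning exactly.
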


\begin{remark}
\label{rem_cor1}
In general, neither Theorem \ref{cor1} nor Lemma \ref{lem2} carries over to the case when $A$ has infinitely many points. Indeed, let $G = \mathscr{P}(\mathbb{N})$ be the set of all subsets of the naturals under symmetric difference, where $\mathbb{N} = \{0, 1, ...\}$. The empty set is the identity element of $G$. For any $A, B \subseteq G$, declare $ARB$ if and only if $B \subseteq A$. Let $A = \{\{n\}, \{0, 1, n\}: n = 2, 3,...\}$. By construction, $A$ is stable because none of its elements contain each other. With $B = \{\{0, 1\}\}$, it is not difficult to check that $A+B=A$, and therefore that equality (\ref{E})) holds, despite $\{0, 1\} \neq \emptyset$.   

%COME UP WITH ANOTHER EXAMPLE
%Indeed, in Example \ref{ex2}, $A$ is infinite, (P2)-(P5) are satisfied, $A$ is stable, (\ref{E}) holds and yet $b \neq 0_{G}$.
\end{remark}

Guided by Theorem \ref{thm1}, we have focused our investigation so far on instances of $B$ which contain points equal or comparable to $0_G$. In the spirit of Theorems \ref{thm1}, \ref{thm2} and \ref{thm3}, we might ask what happens when $B$ contains only points incomparable to $0_G$, or when it contains such points \textit{alongside} $0_G$. It should be noted that the answers to these questions cannot appeal to the arguments invoked in the proofs of the previous three theorems, for the centerpiece of these arguments was the ability to compare a point in $A$ with some point in $A+B$ selected to enable the inference that the former, while efficient in $A$, is inefficient in $A+B$ (Theorem \ref{thm3}), or that the latter, while efficient in $A+B$, is inefficient in $A$ (Theorem \ref{thm2}). To see the problem that would arise from applying this approach to the two situations now under study, take just the situation where no points in $B$ compare to $0_{G}$, and suppose one were to proceed as in the proof of Theorem \ref{thm2} or of Theorem \ref{thm3}. In this case, we have that, for each $a \in A$ and $b \in B$, $aI(a+b)$, a fact which in and of itself carries no implications for the efficiency of $a$ or of $a+b$, neither in $A$ nor in $A+B$, even if, as in the proof of Theorem \ref{thm2} or of Theorem \ref{thm3}, one of the two points was known to be efficient.  

It should by now be clear that different ideas are needed for the two situations we have outlined. We devote the remainder of this section to introducing and applying such ideas in the context of a study of the validity of (\ref{E}).

Our chief findings in case all of $B$ is incomparable with $0_{G}$ are Theorems \ref{thm5} and \ref{thm6}. For ease of consumption, we will state the theorems now, deferring their substantiation until enough background material has been presented.

\begin{theoremfive}
Under (P1), (P3) and (P4), if $A$ is finite and $B = \{b\}$ with $bI0_G$, then (\ref{E}) fails.
\end{theoremfive}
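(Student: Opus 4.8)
The plan is to exploit the fact that, because $B=\{b\}$ is a singleton, the sum set $A+B$ is nothing but the right-translate $A+b=\{a+b:a\in A\}$, and that right-translation by $b$ is an automorphism of $(G,R)$ under (P3). First I would record that for all $g,g'\in G$ one has $gRg'$ if and only if $(g+b)R(g'+b)$: the forward direction is (P3) applied with $z=b$, and the converse follows by applying (P3) with $z=-b$ to $(g+b)R(g'+b)$, using $b+(-b)=0_{G}$. Since (P3) is inherited by $P$, the same equivalence holds with $P$ in place of $R$. Consequently the bijection $a\mapsto a+b$ carries $A$ onto $A+B$ and preserves domination in both directions, so it maps efficient points to efficient points; that is, $\mathscr{E}(A+B)=\mathscr{E}(A)+b$. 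This reduces the theorem to showing $\mathscr{E}(A)+b\neq\mathscr{E}(A)$.

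Next I would verify that $\mathscr{E}(A)$ is nonempty, which is where (P1) and finiteness enter and is indispensable, since otherwise Theorem \ref{thm0} would force (\ref{E}) to hold. Under (P1) the strict relation $P$ is transitive, and being irreflexive it is asymmetric, hence a strict partial order on the finite set $A$; a finite nonempty poset has a maximal element, and a maximal element of $(A,P)$ is precisely an efficient point of $A$. Thus $\mathscr{E}(A)$ is a finite, nonempty set.

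The crux is then to rule out $\mathscr{E}(A)+b=\mathscr{E}(A)$. I would argue by contradiction: if equality held, right-translation by $b$ would map the finite set $\mathscr{E}(A)$ into itself, so the forward orbit $\{a^{\circ}, a^{\circ}+b, a^{\circ}+2b,\dots\}$ of any fixed $a^{\circ}\in\mathscr{E}(A)$ would lie in $\mathscr{E}(A)$ and hence be finite. Two distinct iterates must therefore coincide, $a^{\circ}+ib=a^{\circ}+jb$ with $1\leqq i<j$, which upon cancellation yields $pb=0_{G}$ for $p=j-i\geqq 1$. Reflexivity of $R$ then gives $(pb)R0_{G}$, and property (P4) in the singleton case, i.e.\ (P4'), forces $bR0_{G}$, contradicting $bI0_{G}$. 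Hence $\mathscr{E}(A)+b\neq\mathscr{E}(A)$, and combined with the first paragraph this gives $\mathscr{E}(A+B)\neq\mathscr{E}(A)$, so (\ref{E}) fails.

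I expect the main obstacle to be the automorphism reduction of the first paragraph, specifically the care needed, in a possibly non-abelian $G$, to derive the converse implication $(g+b)R(g'+b)\Rightarrow gRg'$ from isotony and to keep the translation acting on the correct (right) side so that the identity $\mathscr{E}(A+B)=\mathscr{E}(A)+b$ comes out exactly. Once that identity and the nonemptiness of $\mathscr{E}(A)$ are in hand, the finite-orbit argument closing the proof is routine; it also shows en passant that $A+B\neq A$ is automatic under these hypotheses, so the standing assumption $A+B\neq A$ need not be invoked separately here.
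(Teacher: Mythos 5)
Your proof is correct, and it takes a genuinely different route from the paper's. You first establish the equivariance identity $\mathscr{E}(A+B)=\mathscr{E}(A)+b$, using only (P3) and the group inverse to show that right-translation by $b$ preserves $R$ (and hence $P$) in both directions---the inverse trick is sound, since (P3) as stated covers right addition, so adding $-b$ on the right inverts the translation even in a non-abelian $G$. This reduces the theorem to showing $\mathscr{E}(A)+b\neq\mathscr{E}(A)$, which you settle by a finite-orbit pigeonhole argument: a repetition $a^{\circ}+ib=a^{\circ}+jb$ forces $pb=0_G$, whence $(pb)R0_G$ by reflexivity and $bR0_G$ by (P4'), contradicting $bI0_G$. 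The paper instead assumes (\ref{E}), deduces $\mathscr{E}(A)\subseteq A+B$, and derives a contradiction from the inconsistency of the systems (\ref{$S^{0}$}) and (\ref{$S^{1}$}) (Propositions \ref{prop1rep} and \ref{prop1}), which requires splitting into the stable and unstable cases via Theorem \ref{thm4} and, in the unstable case, the elimination machinery of Lemma \ref{prop2} for mixed systems of equations and comparisons. Your route is shorter and cleaner for this theorem: it uses (P1) only to guarantee $\mathscr{E}(A)\neq\emptyset$ (which, as you note, is essential, since otherwise Theorem \ref{thm0} makes (\ref{E}) hold), it never needs the dominated points of $A$ at all, it proves the stronger statement $\mathscr{E}(A+B)=\mathscr{E}(A)+b$, it in fact only requires $\mathscr{E}(A)$ finite rather than $A$ finite, and it yields $A+B\neq A$ for free by the same orbit argument applied to $A$ (mirroring Lemma \ref{lem2}, but avoiding (P2) and (P5)). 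What the paper's heavier machinery buys is generality: when $B$ contains two or more points, $A+B$ is a union of translates and the translation bijection $A\to A+B$ disappears, so your reduction does not extend, whereas the system-inconsistency framework carries over essentially unchanged to Theorems \ref{thm6}, \ref{thm7} and \ref{thm8}. Note also that the combinatorial core of your orbit argument (detecting $pb=0_G$ by repetition in a finite set) is the same cycle-detection idea as in the proof of Proposition \ref{prop1rep}; your equivariance reduction simply lets you apply it directly to $\mathscr{E}(A)$.
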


\begin{theoremsix}
Under identical hypotheses to those of Theorem \ref{thm5}, if $B = \{b^{1}, ..., b^{m}\}$, $m \geqq 2$, such that $b^{i}I0_G$ for each $i = 1, ..., m$, then (\ref{E}) fails.
\end{theoremsix}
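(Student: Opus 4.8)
The plan is to argue by contradiction: assume (\ref{E}) holds and set $E := \mathscr{E}(A) = \mathscr{E}(A+B)$. I would first record that $E$ is nonempty. Since $A$ is nonempty and finite and (P1) makes the strict part $P$ a transitive, irreflexive, hence acyclic, relation, the finite set $A$ must possess a $P$-maximal element; thus $\mathscr{E}(A) \neq \emptyset$, which also rules out the case handled by Theorem \ref{thm0}. Consequently $E$ is a finite, nonempty subset of $A+B$, so each of its points has the form $a + b^i$ with $a \in A$ and $b^i \in B$.

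The crux is a \emph{pullback} step: I claim that every $e \in E$ can be written as $e = e' + b^i$ with $e' \in E$ and $b^i \in B$. Writing $e = a + b^i \in A+B$, suppose $a$ were dominated in $A$, say $a' P a$ with $a' \in A$. Because (P3) holds of $P$, this lifts to $(a' + b^i) P (a + b^i) = e$ with $a' + b^i \in A+B$, contradicting $e \in \mathscr{E}(A+B)$. Hence $a \in \mathscr{E}(A) = E$, and we take $e' = a$.

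Next I would use the finiteness of $E$ to manufacture a cycle. Beginning at any $e_0 \in E$ and applying the pullback repeatedly produces a sequence $(e_k) \subseteq E$ together with elements $c_1, c_2, \dots \in B$ satisfying $e_k = e_{k+1} + c_{k+1}$. Finiteness forces $e_s = e_t$ for some $s < t$; unrolling between these indices gives $e_t = e_s = e_t + c_t + c_{t-1} + \cdots + c_{s+1}$, and left-cancelling $e_t$ leaves
\[
c_t + c_{t-1} + \cdots + c_{s+1} = 0_G,
\]
a nonempty sum of elements of $B$. By reflexivity this element stands in relation $R$ to $0_G$, so (P4)---invoked for the list $c_t, \dots, c_{s+1}$ with every multiplicity equal to one---yields some $c_j$ with $c_j R 0_G$. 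As each $c_j \in B$ satisfies $c_j I 0_G$, this is impossible, and (\ref{E}) cannot hold.

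The decisive obstacle, and where the proof parts company with the singleton case of Theorem \ref{thm5}, is the efficiency-transfer step. In that case one shows directly that pushing an efficient point of $A$ forward by $b$ keeps it efficient in $A+B$; this is no longer available here, since a translate $e + b^i$ may be dominated by a point $a + b^j$ carrying a \emph{different} summand $b^j$, against which (P3) offers no leverage. The resolution is to reverse the direction and pull efficient points of $A+B$ back into $\mathscr{E}(A)$, so that each comparison involves two points sharing the \emph{same} translate $b^i$, which is exactly the configuration in which (P3) applies. A secondary point is that the cyclic relation so obtained genuinely mixes distinct elements of $B$, so closing the argument needs the full form of (P4) and not merely its single-generator specialization (P4') that powers Theorem \ref{thm5}.
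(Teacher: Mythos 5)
Your proof is correct, and it takes a genuinely different route from the paper's. The paper argues that (\ref{E}) would force $\mathscr{E}(A)\subseteq A+B$, writes each efficient point as $a^{j_i}+b^{l_i}$ with an \emph{arbitrary} summand $a^{j_i}\in A$, and must then cope with inefficient summands: it splits into stable and unstable cases, invokes Theorem \ref{thm4} to supply dominators $a^{i_j}Pa^{j}$, and grinds through the inconsistency of the mixed systems (\ref{$S^{2}$}) and (\ref{$S^{3}$}) via the elimination machinery of Lemma \ref{prop3} and Propositions \ref{prop_revision_1} and \ref{prop_revision_2}. Your pullback observation---that efficiency of $e=a+c$ in $A+B$ already forces $a\in\mathscr{E}(A)$, since a dominator of $a$ would lift by (P3) to a dominator of $e$---dissolves all of this: the iteration never leaves the finite set $E$, the pigeonhole produces a cycle directly, left cancellation in the group leaves a word of $B$-elements equal to $0_G$, and full (P4) applied with all multiplicities one finishes; since (P4) does not require the $b^{l}$ to be distinct, no regrouping of the word into powers is needed, which makes your step cleaner than the paper's analogous manipulations in the nonabelian setting. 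Your argument also subsumes Theorem \ref{thm5} ($m=1$) in the same stroke, needs Theorem \ref{thm4} only for nonemptiness of $\mathscr{E}(A)$, and does not even use the standing assumption $A+B\neq A$ (harmless, as the hypotheses already exclude $A+B=A$ by the Lemma \ref{lem2} argument). What the paper's longer route buys is sharper information: it localizes the failure by showing $\mathscr{E}(A)\not\subseteq A+B$, and its inconsistency propositions stand alone, quantified over all index choices, for reuse elsewhere; your proof consumes both inclusions of (\ref{E}) at once (the pullback needs $e\in\mathscr{E}(A+B)$ to decompose and $\mathscr{E}(A)=\mathscr{E}(A+B)$ to re-enter the iteration), so it concludes only that the two efficient sets differ, without saying how. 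One cosmetic point: your closing sketch of how the singleton case is handled is not how Theorem \ref{thm5} is actually proved in the paper---it uses the same system-inconsistency scheme---but this has no bearing on the validity of your argument.
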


To be sure, Theorem \ref{thm5} is a special case of Theorem \ref{thm6}. However, because the former has a much shorter proof, and because the essentials of the techniques employed in both instances are the same, we will present in this section only the proof of the former, dispatching the more general proof to the appendix. %the reader with a sketch of a program for accomplishing the generalization. 

The bases for Theorems \ref{thm5} and \ref{thm6} are Theorem \ref{thm4} and Propositions \ref{prop1rep} and \ref{prop1}. Theorem \ref{thm4} is a general result on efficient sets, reported in \cite{white1977kernels} and credited to a theorem in graph theory due to \cite{berge1985graphs}. Its utility here will soon become apparent.

\begin{theorem}[appears in \cite{white1977kernels} as Theorem 3]
\label{thm4}
If $A$ is finite, then, under (P1), the efficient set $\mathscr{E}(A)$ is nonempty, and for every $a \in A$ there exists $a' \in \mathscr{E}(A)$ such that $a'Ra$. In particular, if $a \notin \mathscr{E}(A)$, then $a'Pa$.
\end{theorem}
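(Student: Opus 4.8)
The plan is to lean on the order-theoretic structure that (P1) confers on the strict relation $P$. Under (P1) the relation $P$ is transitive (as recorded in Section \ref{sec2}), and it is automatically irreflexive, since $gPg$ would demand that $gRg$ both hold and fail. Hence $P$ is a strict partial order on the finite set $A$, and the efficient points of $A$ are precisely the $P$-maximal elements. The whole theorem then amounts to the familiar fact that in a finite strict poset every element lies weakly below some maximal element.

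Concretely, I would fix an arbitrary $a \in A$ and build a $P$-ascending chain $a = a_0, a_1, a_2, \dots$ by selecting, whenever $a_k \notin \mathscr{E}(A)$, a witness $a_{k+1} \in A$ with $a_{k+1}Pa_k$; the construction halts exactly when some $a_k$ proves to be efficient. Transitivity of $P$ yields $a_jPa_i$ for all $j > i$, and irreflexivity then forces the $a_i$ to be pairwise distinct. Since $A$ is finite, the chain cannot be infinite and must terminate at some $a_N \in \mathscr{E}(A)$. Taking $a' = a_N$: if $N \geqq 1$, repeated application of transitivity gives $a'Pa$, whence $a'Ra$; if $N = 0$, then $a$ is already efficient and $a' = a$ satisfies $a'Ra$ by reflexivity. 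In either case $a' \in \mathscr{E}(A)$ with $a'Ra$, and the nonemptiness of $\mathscr{E}(A)$ follows at once because $A$ is nonempty. The \emph{in particular} clause drops out of the same construction: if $a \notin \mathscr{E}(A)$ then at least one step occurs, so $N \geqq 1$ and therefore $a'Pa$.

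I expect the crux to be the termination of the chain, that is, the claim that the $a_i$ are pairwise distinct, for it is precisely here that (P1) is indispensable. Without transitivity $P$ could harbor a cycle such as $a_1Pa_2Pa_3Pa_1$, giving a finite set on which no point is efficient at all; this is exactly the obstruction that transitivity excludes. The rest of the argument is routine, and the whole may be read as the specialization of Berge's kernel theorem to the acyclic digraph whose arcs are the pairs related by $P$.
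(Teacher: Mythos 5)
Your proof is correct. There is, however, nothing in the paper to compare it against: the paper states this theorem without proof, importing it from \cite{white1977kernels} (where it appears as Theorem~3) and crediting it in turn to a kernel theorem of \cite{berge1985graphs}. Your argument is, in effect, the elementary proof that stands behind that citation chain, specialized from digraphs to the strict relation $P$: you correctly verify that (P1) passes from $R$ to $P$ (the paper records this fact in Section~\ref{sec2}), that $P$ is irreflexive by definition regardless of any hypothesis, and that efficiency coincides with $P$-maximality; the ascending-chain construction with pairwise-distinct terms then terminates by finiteness, exactly as the acyclicity of the $P$-digraph guarantees. Two small points are handled properly and deserve the emphasis you give them: in the case $N=0$ the conclusion $a'Ra$ rests on the blanket reflexivity of $R$ assumed throughout the paper (without it, only the ``in particular'' clause and nonemptiness would survive), and your closing remark identifies the genuine content of (P1) here --- without transitivity, $P$ can harbor a cycle such as $a^{1}Pa^{2}$, $a^{2}Pa^{3}$, $a^{3}Pa^{1}$, in which case a finite set may have an empty efficient set, so the theorem would simply be false. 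Your self-contained proof is thus a useful supplement to the paper, which relies on the external reference.
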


\begin{remark}In commenting on a special application of Theorem \ref{thm4} in \cite{mifrani2025counterexample}, we suggest that Theorem \ref{thm4} could be interpreted as generalizing the fact that the maximum of a totally ordered set, if it exists, is ``greater" than or equal to any element of that set. The sole efficient point in such a set relative to the order relation is its maximum. Therefore, if we denote this set with $X$, Theorem \ref{thm4} maintains that $\max(X)Rx$ for all $x \in X$.\end{remark}

Equality (\ref{E}) can only be true if all efficient points in $A$ are members of $A+B$. We give two propositions, Propositions \ref{prop1rep} and \ref{prop1}, which show that the hypotheses of Theorem \ref{thm5} preclude this situation. It should be recalled that in Theorem \ref{thm5}, $B = \{b\}$, $bI0_{G}$. Furthermore, since $A$ is finite, $\mathscr{E}(A)$ is nonempty (Theorem \ref{thm4}), and so Theorem \ref{thm0} does not apply. %For any nonzero integer $p \geqq 1$ and any $g \in G$, we let $pg$ signify the sum of $p$ copies of $g$ in $G$. When $p = 0$ we set $pg = 0_{G}$. 

\begin{proposition}
\label{prop1rep}
%Suppose that, in addition to the hypotheses already imposed on $R$, $(pb)P0_{G}$ implies $bP0_{G}$ for all $p = 1, 2, ...$ and $b \in B$.
Assume (P3) and (P4). Suppose that $A$ is finite and let $A = \{a^1, ..., a^n\}$, $n \geqq 1$, and $b \in B$. Then, for each set of indices $j_1, ..., j_n$ in $\{1, ..., n\}$, the system
\begin{equation}
\tag{$S^{0}$}
\label{$S^{0}$}
  \left\{
    \begin{aligned}
      & a^1 = a^{j_1} + b, \\
      & \dots \\
      & a^n = a^{j_n} + b, \\
      & bI{0_G},
    \end{aligned}
  \right.
\end{equation}
is inconsistent.
\end{proposition}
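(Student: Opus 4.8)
The plan is to show that the consistency of the $n$ translation equations would force $b$ to be comparable with $0_G$, in flat contradiction with the final condition $bI0_G$; hence no choice of indices can satisfy the whole system, and $(S^{0})$ is inconsistent. The decisive ingredient is property (P4), which I will use only in its weakened form (P4').

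First I would read off what the equations say as a whole. Each equation $a^i = a^{j_i} + b$ displays $a^i$ as the image of $a^{j_i}$ under the translation $g \mapsto g + b$, so every element of $A$ lies in $A + b = \{a + b : a \in A\}$; that is, $A \subseteq A + b$. Since $G$ is a group, this translation is injective, so $|A + b| = |A| = n$. An inclusion $A \subseteq A + b$ between finite sets of equal cardinality is an equality, whence $A = A + b$. In particular $a + b \in A$ for every $a \in A$, and a trivial induction gives $a + kb \in A$ for every $a \in A$ and every integer $k \geqq 0$.

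Next I would distil an algebraic fact from this stability. Fixing any $a \in A$ and running through $a, a + b, a + 2b, \dots$ inside the finite set $A$, the pigeonhole principle supplies integers $0 \leqq k < l$ with $a + kb = a + lb$. Cancelling in the group and putting $p = l - k \geqq 1$ yields $pb = 0_G$. Associativity makes this step legitimate even when $G$ is non-abelian, because $b$ is always adjoined on the same side and $kb, lb, pb$ are genuine repeated sums of the single element $b$.

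Finally I would bring in the relation. Reflexivity of $R$ gives $(pb)R0_G$, and because (P4), and hence (P4'), is assumed, this forces $bR0_G$. But $bR0_G$ contradicts $bI0_G$, which asserts that neither $bR0_G$ nor $0_GRb$ holds. Thus the equations cannot all hold together with $bI0_G$, for any indices $j_1, \dots, j_n$, and the system is inconsistent. I expect the main obstacle to be the transition from the combinatorial observation that translation by $b$ permutes the finite set $A$ to the clean identity $pb = 0_G$, together with the care required to keep the possible non-commutativity of $G$ from spoiling the repeated-addition bookkeeping; once $pb = 0_G$ is in hand, the appeal to (P4') is immediate. It is worth noting that this route seems to call only on reflexivity, the group structure, finiteness and (P4'); property (P3), though listed among the hypotheses, does not appear to enter.
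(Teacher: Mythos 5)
Your proof is correct and takes essentially the same approach as the paper's: both arguments iterate the translation $g \mapsto g+b$ inside the finite set $A$ and apply the pigeonhole principle to extract $pb = 0_{G}$ for some integer $p \geqq 1$, after which reflexivity of $R$ and (P4') give $bR0_{G}$, contradicting $bI0_{G}$; your detour through the set identity $A + b = A$ and the orbit $a, a+b, a+2b, \dots$ of a single point is simply a tidier packaging of the paper's cycle-of-equations argument on the index map $i \mapsto j_i$. Your closing remark that (P3) never enters is also consistent with the paper's own proof, which likewise uses only reflexivity, the group structure, finiteness and (P4).
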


\begin{proposition}
\label{prop1}
%Suppose that, in addition to the hypotheses already imposed on $R$, $(pb)P0_{G}$ implies $bP0_{G}$ for all $p = 1, 2, ...$ and $b \in B$.
Assume (P1), (P3) and (P4). Suppose $A$ is a finite unstable set, so that it contains at least two points. Let $A = \{a^1, ..., a^n\}$, $n \geqq 2$, and $b \in B$. Then, for each $k = 1, ..., n-1$, for each set of indices $j_1, ..., j_k$ in $\{1, ..., n\}$, and for each set of indices $i_{k+1}, ..., i_{n}$ in $\{1, ..., k\}$, the system
\begin{equation}
\tag{$S^{1}$}
\label{$S^{1}$}
  \left\{
    \begin{aligned}
      & a^1 = a^{j_1} + b, \\
      & \dots \\
      & a^k = a^{j_k} + b, \\
      & a^{i_{k+1}}Pa^{k+1}, \\
      & \dots \\
      & a^{i_{n}}Pa^{n}, \\
      & bI{0_G},
    \end{aligned}
  \right.
\end{equation}
is inconsistent.
\end{proposition}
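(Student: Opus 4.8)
The plan is to assume, for contradiction, that system (\ref{$S^{1}$}) is consistent, and to manufacture from it a relation $(pb)R0_G$ for some integer $p \geqq 1$; property (P4') then forces $bR0_G$, contradicting the incomparability $bI0_G$. I would run the entire argument inside $R$ (strictness never enters), which is precisely why only (P4), and not (P5), is needed. The mechanism extends that of Proposition \ref{prop1rep}, where all $n$ points are shifts of points of $A$ by $b$, the right-translation is a permutation of the finite set $A$, and chasing a permutation cycle gives $pb = 0_G$; the novelty here is accommodating the inefficient points by rerouting through the domination relations. First I would record the two ingredients the system supplies: the first $k$ equations define a shift-predecessor map $\pi(l) = j_l$ on $\{1, \ldots, k\}$ with $a^l = a^{\pi(l)} + b$, while the remaining equations give, for each inefficient index $l \in \{k+1, \ldots, n\}$, a dominator index $i_l \in \{1, \ldots, k\}$ with $a^{i_l}Pa^l$.

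The combinatorial heart is to build a walk that never leaves $\{1, \ldots, k\}$. Starting from any $c_0 \in \{1, \ldots, k\}$ and given $c_t$, I examine $\pi(c_t)$: if $\pi(c_t) \leqq k$ I take a \emph{shift step} and set $c_{t+1} = \pi(c_t)$; if $\pi(c_t) > k$ I take a \emph{domination step} and set $c_{t+1} = i_{\pi(c_t)}$. In both cases $c_{t+1} \in \{1, \ldots, k\}$, so the walk lives in a finite set and must revisit an index, closing a cycle $c_0, c_1, \ldots, c_T = c_0$ with $T \geqq 1$. The uniform observation that drives everything is that \emph{every} step yields $(a^{c_{t+1}} + b)Ra^{c_t}$: a shift step gives the equality $a^{c_t} = a^{c_{t+1}} + b$, hence the relation by reflexivity; a domination step gives $a^{c_{t+1}} = a^{i_r}Ra^r = a^{\pi(c_t)}$ (with $r = \pi(c_t)$), to which right-isotonicity (P3), together with $a^{c_t} = a^{\pi(c_t)} + b$, appends $b$ on the right.

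Finally I would close the cycle. Right-adding successive copies of $b$ and invoking transitivity (P1), an induction on $t$ yields $(a^{c_t} + tb)Ra^{c_0}$; at $t = T$, using $c_T = c_0$, this reads $(a^{c_0} + Tb)Ra^{c_0}$. Left-adding the inverse $-a^{c_0}$ through isotonicity (P3) cancels $a^{c_0}$ on both sides and leaves $(Tb)R0_G$ with $T \geqq 1$. Property (P4') then delivers $bR0_G$, the sought contradiction with $bI0_G$, so (\ref{$S^{1}$}) cannot be consistent.

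The step I expect to be the main obstacle, and would take most care over, is the combinatorial rerouting together with the bookkeeping of the shifts: one must verify that the dominator reassignment keeps the walk inside $\{1, \ldots, k\}$, and that the accumulated copies of $b$ telescope correctly on the right in the possibly non-abelian group $G$, each $b$ being appended on the same side so that $b + tb = (t+1)b$ collapses cleanly. The conceptual hurdle behind the whole scheme is recognizing that shift steps and domination steps can be placed on an equal footing as $R$-relations pointing the same way; this is what lets the argument stay entirely within $R$ and rest on (P4) alone.
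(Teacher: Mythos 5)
Your proof is correct, and it takes a genuinely different route from the paper's. The paper proceeds in two stages: it first disposes separately of the case where the $k$ equations of system $(S^{1})$ contain a cycle (yielding $pb = 0_G$ directly), and then, in the acyclic case, invokes Lemma \ref{prop2} to eliminate every efficient point, rewriting each comparison as $(a^{m_j} + p_j b)Pa^{j}$ with $m_j \in \{k+1,\dots,n\}$, before running a pigeonhole cycle-chase among the \emph{dominated} indices to obtain $(\sum p)bP0_G$ and hence $bR0_G$ via (P4). You instead collapse both stages into a single deterministic walk on the \emph{efficient} indices $\{1,\dots,k\}$, with the key observation that a shift step and a domination step alike produce a relation of the uniform shape $(a^{c_{t+1}}+b)Ra^{c_t}$ (the domination step because $a^{i_r}Pa^{r}$ entails $a^{i_r}Ra^{r}$, and right-isotonicity appends $b$); this removes the cyclic/acyclic dichotomy and dispenses with Lemma \ref{prop2} altogether, and it stays entirely within $R$, using only (P4$'$), just as the paper's argument ultimately does. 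One small wording repair: the first repeated index in your walk need not be $c_0$, so the pigeonhole gives $c_u = c_v$ with $u < v$ rather than a cycle through $c_0$; since your step rule is a function of the current index alone, you simply start the telescoping induction at $c_u$ to get $(a^{c_u} + (v-u)b)Ra^{c_u}$, and the cancellation by $-a^{c_u}$ and (P4$'$) then go through verbatim. What each approach buys: yours is shorter and more self-contained for this proposition; the paper's elimination lemma, though heavier here, is machinery that is reused almost unchanged (as Lemma \ref{prop3}) to prove the multi-point generalization in the appendix, whereas your walk would need additional bookkeeping of which $b^{s}$ is appended at each step to extend to system $(S^{2})$.
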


\begin{remark}
These propositions and the two theorems which derive from them are of interest primarily when $A$ contains more than one point. The case of a singleton $A$ falls within the scope of Theorem \ref{cor1}. One implication of that theorem is that if $A = \{a^{1}\}$, $a^{1} \in G$, and $bI0_{G}$, then $\mathscr{E}(A+B) \neq \mathscr{E}(A)$.    
\end{remark}

To gain some insight into the significance of these propositions, one should think of $a^1, ..., a^{k}$ as the efficient points of $A$. Either these constitute the whole of $A$ or they do not. If $\mathscr{E}(A) = A$, Proposition \ref{prop1rep} tells us that, since $bI0_{G}$, at least one of the points must lie outside $A+B$, hence $A = \mathscr{E}(A) \not \subseteq A+B$. If not, the points $a^{k+1}, ..., a^{n}$ represent the inefficient portion of $A$, and for each $j = k+1, ..., n$ Theorem \ref{thm4} provides us with an index $i_j = 1, ..., k$ satisfying $a^{i_j}Pa^{j}$. Proposition \ref{prop1} merely states that, because $bI0_{G}$, the previous sentence is incompatible with a situation in which all of the $a^{j}$, $i = 1, ..., k$, were in $A+B$, thus implying that $\mathscr{E}(A) \not \subseteq A+B$ as before.  

The proof that system (\ref{$S^{0}$}) is inconsistent is somewhat uncomplicated. Take any set of indices $j_1, ..., j_n$ from $\{1, ..., n\}$. In the event that an index $j_i$ equalled $i$, the $i$-th equation in (\ref{$S^{0}$}) would imply $b = 0_{G}$ by virtue of property (P3), and that would preclude $bI0_{G}$. (This takes care of the case $n = 1$.) If, on the other hand, $n \geqq 2$ and the indices were selected in such a way that $j_i \neq i$ for each $i$, a simple combinatorial argument will yield the sought inconsistency result. We make such an argument below.
%We will give such a proof in connection with system (\ref{S^{1}}), and, although comparisons are involved in this system, roughly the same ideas apply, \textit{mutatis mutandis}, to (\ref{S^{0}}) (see Remark \ref{}).

\begin{proof}[Proof of Proposition \ref{prop1rep}]
%To dispel any confusion down the line, we shall denote the system (\ref{$S^{0}$}) associated with a set of $n$ integers $i_1, ..., i_n$ by (\ref{$S^{0}$}($j_1, ..., j_n$)). The goal is to show that (\ref{$S^{0}$}($j_1, ..., j_n$)) is inconsistent for all $j_1, ..., j_n$, for all $n \geqq 2$, whenever $j_i \neq i$ for each $i$. Recall that the case where $j_i = i$ for some $i$ has already been dealt with.

Some specific terminology is in order. Within system (\ref{$S^{0}$}), we shall identify as a \textit{cycle} any group of $r \leqq n$ equations that bears the form
\begin{equation*}
  \left\{
    \begin{aligned}
      & a^{l_1} = a^{l_2} + b, \\
      & a^{l_2} = a^{l_3} + b, \\
      & \dots \\
      & a^{l_r} = a^{l_1} + b,
    \end{aligned}
  \right.
\end{equation*} 
with $l_1, \dots, l_r$ being integers among $1, \dots, n$. Our argument is that (\ref{$S^{0}$}) necessarily exhibits a cycle. 

Let $f$ be the permutation that sends each $i = 1, \dots, n$ to the corresponding right-hand side index $j_i$. Fix some reference index $i_0$ and form the sequence \begin{equation*}i_0, f(i_0), \dots, f^{n}(i_0).\end{equation*} This sequence says that \begin{equation*}
  \left\{
    \begin{aligned}
      & a^{i_0} = a^{f(i_0)} + b, \\
      & a^{f(i_0)} = a^{f^2(i_0)} + b, \\
      & \dots \\
      & a^{f^{n-1}(i_0)} = a^{f^{n}(i_0)} + b,
    \end{aligned}
  \right.
\end{equation*}
Now each element in the sequence ranges in $\{1, \dots, n\}$, while the sequence itself runs to $(n+1)$ members. According to the pigeonhole principle, therefore, there must exist a pair of integers $0 \leqq u < m$ with $f^{u}(i_0) = f^{m}(i_0)$. Set $r = m-u$. The subsequence starting at $l_1 := f^{u}(i_0)$ and culminating in $l_r := f^{m}(i_0) = l_1$ engenders a cycle in the sense that the equations\begin{equation*}
  \left\{
    \begin{aligned}
      & a^{l_1} = a^{l_2} + b, \\
      & a^{l_2} = a^{l_3} + b, \\
      & \dots \\
      & a^{l_r} = a^{l_1} + b,
    \end{aligned}
  \right.
\end{equation*}
hold. We conclude from this that $a^{l_1} = a^{l_1} + rb$, which is to say $0_G = rb$. But if property (P4) operates, this can only be true if $bR0_G$.
%For $n = 2$, if $i_j \neq j$ for each $j = 1, 2$, then $i_1 = 2$ and $i_2 = 1$. Assuming (\ref{$S^{0}$}($i_1, i_2$)) were consistent, we would have $a^{2} = a^{1} + b$ and $a^{1} = a^{2} + b$, so that $2b = 0_{G}$. Because $R$ is reflexive, that would imply $2bR0_{G}$, which would in turn imply $bR0_{G}$ when property (P4) is factored in. However, that would gainsay the fact that $bI0_{G}$.

%To complete the induction, let $n \geqq 2$, and assume that (\ref{$S^{0}$}($i_1, ..., i_n$)) is inconsistent for any choice of $n$ indices such that $i_j \neq j$ for each $j$. Moreover, let $i_1, ..., i_{n+1}$ be $(n+1)$ indices chosen from $\{1, ..., n+1\}$ to satisfy $i_j \neq j$ for each $j = 1, ..., n+1$. It is clear that the induction hypothesis applies to the $n$ indices $i_1, ..., i_n$, meaning that (\ref{$S^{0}$}($i_1, ..., i_n$))
%is inconsistent. Because this system is logically implied by (\ref{$S^{0}$}($i_1, ..., i_{n+1}$)), it follows that (\ref{$S^{0}$}($i_1, ..., i_{n+1}$)) is itself inconsistent.
\end{proof}

With regard to Proposition \ref{prop1}, one can see that system (\ref{$S^{1}$}) depends on $k$ as well as on the choice of $i_1, ..., i_k$ and $j_{k+1}, ..., j_n$. We omit this dependency in order to simplify notation, but it ought always to be borne in mind. We believe it helps, as a prelude to proving this proposition, to offer some corroborating examples in Examples \ref{ex4}, \ref{ex5} and \ref{ex6}. The treatment of these simple and, as we shall soon see, carefully selected examples will shed light on the basic strategy of our proof.  

\begin{example}
\label{ex4}
Let $A = \{a^1, a^2, a^3, a^4, a^5\}$, $b \in G$ and $k = 3$. Suppose $a^1, ..., a^5$ and $b$ satisfy
\begin{equation*}
  \left\{
    \begin{aligned}
      & a^1 = a^{2} + b, \\
      & a^2 = a^{4} + b, \\
      & a^3 = a^{5} + b, \\
      & a^{3}Pa^{4}, \\
      & a^{1}Pa^{5}, \\
      & bI{0_G},
    \end{aligned}
  \right.
\end{equation*}
The two comparisons involving $P$ can be rewritten as $(a^{5}+b)Pa^{4}$ and $(a^{4}+2b)Pa^{5}$. By (P3), we can again rewrite the second comparison as $(a^{4}+3b)P(a^{5}+b)$. By (P1), then, $(a^{4}+3b)Pa^{4}$. This last fact implies $(3b)P0_{G}$, which in turn implies $bP0_{G}$ due to (P4). This conclusion is inconsistent with the fact that $bI0_{G}$. 
\end{example}

\begin{example}
\label{ex5}
Take the same setting as in Example \ref{ex4} and substitute $a^{2}Pa^{4}$ for $a^{3}Pa^{4}$. Since $a^2 = a^4 + b$, this comparison directly implies that $bP0_{G}$, again in contradiction with $bI0_G$. 
\end{example}

\begin{example}
\label{ex6}
Let $A = \{a^1, a^2, a^3, a^4, a^5, a^6\}$, $b \in G$ and $k = 3$. Suppose $a^1, ..., a^6$ and $b$ satisfy
\begin{equation*}
  \left\{
    \begin{aligned}
      & a^1 = a^{4} + b, \\
      & a^2 = a^{5} + b, \\
      & a^3 = a^{6} + b, \\
      & a^{2}Pa^{4}, \\
      & a^{3}Pa^{5}, \\
      & a^{1}Pa^{6}, \\
      & bI{0_G},
    \end{aligned}
  \right.
\end{equation*}
The equations allow us to rewrite the comparisons as $(a^5 + b)Pa^4$, $(a^6+b)Pa^5$ and $(a^4 + b)Pa^{6}$. The import of the first two comparisons is that $(a^6 + 2b)Pa^{4}$. By combining the latter comparison with $(a^4 + b)Pa^6$ we derive the further comparison $(a^4 + 3b)Pa^{4}$, hence $(3b)P0_G$ and therefore $bP0_G$. However, that disagrees with the fact that $bI0_G$. 
\end{example}

These examples suggest at once a procedure for demonstrating the inconsistency of system (\ref{$S^{1}$}) in general. First, we eliminate $a^1, ..., a^k$ from the comparisons by replacing them with the right-hand side expressions provided by the $k$ equations. If a comparison of the form $(a^{i}+pb)Pa^{i}$ is revealed, we are done (Example \ref{ex5}), and if a pair of comparisons of the form $(a^{i}+pb)Pa^{j}$ and $(a^{j}+p'b)Pa^{i}$, $j \neq i$, appear instead, we are also done (Example \ref{ex4}). If neither situation arises, we generate, following Example \ref{ex6}, a new comparison $(a^{i}+pb)Pa^{j}$ with $j \neq i$, then we search the original set of comparisons for one of the form $(a^{j}+p'b)Pa^{i}$. If such a comparison exists, we are done; otherwise we continue generating comparisons as indicated until this situation obtains, taking at each iteration the preceding set of comparisons as our point of departure\footnote{Readers trained in linear programming will notice a slight resemblance between this procedure and Fourier's procedure \citep{williams1986fourier} for eliminating variables from a system of linear inequalities. For example, when in the final step of Example \ref{ex4} we inferred that $(a^{4}+ 3b)Pa^{4}$, we, in Fourier's language, eliminated the ``variable" $a^{6}$ between the two ``inequalities" $a^{6}P(a^{4} + (-2b))$ and $(a^{4} + b)Pa^{6}$.}.

We must attend to a couple of background issues if we are to operationalize these steps as a method of proof. In the first place, although none of the examples bears this out, it is sometimes impossible to eliminate from the comparisons all of $a^1, ..., a^k$. Consider a variation of Example \ref{ex4} where the equations read successively $a^1 = a^2 + b$, $a^2 = a^1 + b$ and $a^3 = a^5 + b$, subject to the same comparisons as before. It is clear that neither $a^1$ nor $a^2$ can be discarded from the comparison $a^1Pa^5$, and so the first step of the procedure fails. In the second place, it is not immediately obvious why, if no comparison of the form $(a^i + pb)Pa^{i}$ is present initially, there must exist a pair of comparisons of the form specified above, either in the original system or in the sequence of systems produced by the transformation illustrated in Example \ref{ex6}.  

In light of these problems it is useful to recall cycles as introduced in the proof of Proposition \ref{prop1rep}. Given some $k = 1, ..., n-1$ and some choice of associated indices, we shall classify as a cycle of (\ref{$S^{1}$}) any equation of the form $a^i = a^i + b$, $i = 1, ..., k$, and any group of $r \leqq k$ equations of the form
\begin{equation*}
  \left\{
    \begin{aligned}
      & a^{l_1} = a^{l_2} + b, \\
      & a^{l_2} = a^{l_3} + b, \\
      & \dots \\
      & a^{l_r} = a^{l_1} + b,
    \end{aligned}
  \right.
\end{equation*}where the superscripts $l_1, \dots, l_r$ range in $\{1, \dots, k\}$. A cycle is formed, for example, by the equations $a^{1} = a^{2} + b$ and $a^2 = a^1 + b$ of the above-described variant of Example \ref{ex4}.

Cycles need not exist for a particular choice of $k$, of $i_1, ..., i_k$ and of $j_{k+1}, ..., j_n$, but it is easy to show (see the proof of Proposition \ref{prop1rep}) that when they do, we will find at least one index $i = 1, ..., k$ and an integer\footnote{In reality, we will have that $a^{i} = a^{i} + pb$ for \textit{any} $p \geqq 1$.} $p \geqq 1$ such that $a^{i} = a^{i} + pb$, from which it will follow that $pb = 0_{G}$. But if, in addition to $R$ being reflexive, property (P4) operates, this will mean that $bR0_{G}$, whereas in fact $bI0_{G}$. 

This exhausts the treatment of cyclical systems. If (\ref{$S^{1}$}) is not cyclical, then Lemma \ref{prop2} effectively establishes that each of $a^1, ..., a^k$ can be eliminated from the comparisons $a^{i}Pa^{j}$, $i = 1, ..., k$, $j = k+1, ..., n$, as witness Examples \ref{ex4}-\ref{ex6}. 

\begin{lemma}
\label{prop2}
Given an index $k = 1, ..., n-1$, if system (\ref{$S^{1}$}) holds and contains no cycles, then every $a^i$, $i = 1, ..., k$, satisfies the equation $a^i = a^j + pb$ for some $j = k+1, ..., n$ and $p \geqq 1$. As a result, each of $a^1, ..., a^{k}$ is expressible solely in terms of $a^{k+1}, ..., a^{n}$.
\end{lemma}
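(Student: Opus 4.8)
The plan is to read the first $k$ equations of $(S^1)$ as the graph of a map and to follow the orbit of each starting index until it escapes the set $\{1, \dots, k\}$, at which point the corresponding chain of equations telescopes into the desired expression.

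First I would define a map $f \colon \{1, \dots, k\} \to \{1, \dots, n\}$ by $f(i) = j_i$, so that the $i$-th equation reads $a^i = a^{f(i)} + b$ for each $i = 1, \dots, k$. Fixing an arbitrary starting index $i \in \{1, \dots, k\}$, I would form the orbit $i_0 = i$, $i_1 = f(i_0)$, $i_2 = f(i_1)$, and so on, noting that the next term $i_{t+1} = f(i_t)$ is defined precisely as long as $i_t$ lies in the domain $\{1, \dots, k\}$.

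The crux is to show that this orbit must leave $\{1, \dots, k\}$, i.e. that some $i_p$ with $p \geqq 1$ falls into $\{k+1, \dots, n\}$. Here I would argue by contradiction using the hypothesis that $(S^1)$ contains no cycles. If the first $k+1$ terms $i_0, \dots, i_k$ all belonged to $\{1, \dots, k\}$, then, since this set has only $k$ elements, the pigeonhole principle would force a repetition among them; taking $t$ minimal with $i_t$ equal to some earlier term $i_s$ ($s < t$) yields distinct indices $i_s, i_{s+1}, \dots, i_{t-1}$ in $\{1, \dots, k\}$ together with the equations $a^{i_s} = a^{i_{s+1}} + b, \dots, a^{i_{t-1}} = a^{i_t} + b = a^{i_s} + b$, which is exactly a cycle in the sense of our definition, contradicting the hypothesis. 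Hence the orbit cannot remain in $\{1, \dots, k\}$ for $k+1$ consecutive steps, so there is a least $p \geqq 1$ with $i_p \in \{k+1, \dots, n\}$ and $i_0, \dots, i_{p-1} \in \{1, \dots, k\}$.

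For such a $p$, all the equations $a^{i_0} = a^{i_1} + b$, $a^{i_1} = a^{i_2} + b$, $\dots$, $a^{i_{p-1}} = a^{i_p} + b$ are among the first $k$ equations of $(S^1)$, so substituting successively gives $a^i = a^{i_0} = a^{i_p} + pb$ with $j := i_p \in \{k+1, \dots, n\}$ and $p \geqq 1$, which is the asserted equation. Since $i$ was arbitrary, every $a^i$ with $i = 1, \dots, k$ is thereby written in terms of a single $a^j$ with $j \in \{k+1, \dots, n\}$, establishing the final clause. The main obstacle is the escape argument: one must be careful that $f$ is only defined on $\{1, \dots, k\}$ while its values range over all of $\{1, \dots, n\}$, so the orbit is guaranteed to continue only while it stays inside the domain, and it is precisely the no-cycle hypothesis (via pigeonhole) that both forces the orbit out and guarantees termination after at most $k$ steps.
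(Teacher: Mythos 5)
Your proof is correct and follows essentially the same route as the paper's: iterating the substitution $a^i = a^{f(i)} + b$, invoking the pigeonhole principle to show that an orbit confined to $\{1,\dots,k\}$ would produce a cycle, and telescoping once the orbit escapes to $\{k+1,\dots,n\}$. Your version is slightly more explicit about the bound (escape within $k+1$ steps) and the minimality argument extracting the cycle, but the substance is identical.
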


\begin{proof}
%Let us suppose that no cycles exist in (\ref{$S^{1}$}). Then, in the choice of the point $a^{j_1}$ associated with $a^1$ in the equation $a^1 = a^{j_1} + b$, there are $(n-1)$ possibilities, namely $a^2, ..., a^k, a^{k+1}, ..., a^{n}$. For $a^2$, we may select any point bar $a^1$ and $a^2$. Pursuing this construction until $a^k$, we see that of the $(n-k)$ choices remaining, none of them lies in $\{a^1, ..., a^k\}$. Therefore, $a^{j_k}$ must be in $\{a^{k+1}, ..., a^{n}\}$.   
Fix an index $k \in \{1, \dots, n-1\}$, and suppose that system (\ref{$S^{1}$}) holds while exhibiting no cycles. For each index $i \leqq k$, (\ref{$S^{1}$}) provides an equation of the form $a^i = a^{i'} + b$ for some $i' \in \{1, \dots, n\}$.

If $i'$ lies in $\{k+1, \dots, n\}$, then $a^i$ bears the desired representation with $p = 1$ and $j = i'$.

If not, then (\ref{$S^{1}$}) also expresses $a^{i'}$ as $a^{i'} = a^{i''} + b$ for some $i'' \in \{1, \dots, n\}$, and that means $a^i = a^{i''} + 2b$. Carrying on in this fashion for $t$ steps yields the equation $a^i = a^{i_t} + tb$ for an index $i_t \in \{1, \dots, n\}$. There are only two possible outcomes to this iterative process. As there are finitely many indices in $\{1, \dots, k\}$, either we encounter, after finitely many substitutions, an index $i_t$ lying in $\{k+1, \dots, n\}$, in which case we will establish the required representation of $a^i$ as $a^i = a^{i_t} + pb$ with $p = t$, or, alternatively, the process will forever produce indices in $\{1, \dots, k\}$. But should the latter situation arise, the pigeonhole principle assures us that we would have $a^{i_r} = a^{i_s} + (s-r)b$ for some $r < s$---in other words, a cycle. Since we have ruled out the appearance of cycles in (\ref{$S^{1}$}), the process must terminate in the manner described in the first situation.     

\end{proof}

Consider now what happens when this lemma is in force. The $(n-k)$ comparisons that comprise the lower portion of (\ref{$S^{1}$}) could be rewritten as
\begin{equation*}
  \left\{
    \begin{aligned}
      & (a^{m_{k+1}} + p_{k+1}b)Pa^{k+1}, \\
      & \dots \\
      & (a^{m_{n}} + p_{n}b)Pa^{n}, \\
      & bI{0_G},
    \end{aligned}
  \right.
\end{equation*}
where $m_{k+1}, \dots, m_{n} \in \{k+1, \dots, n\}$ and $p_{k+1}, \dots, p_n \geqq 1$ are integers furnished by the lemma. Viewed in this way, the $(n-k)$ comparisons have the curious feature observed in Examples \ref{ex4}-\ref{ex6} that they exhibit a cycle in the following sense. If we let $f(i) = m_{i}$ for each $i = k+1, \dots, n$, it will be contended below that there exist an $r = k+1, \dots, n$ and a sequence of $u \geqq 1$ points $a^{r}, a^{f(r)}, \dots, a^{f^{u-1}(r)}$ such that 
\begin{equation*}
  \left\{
    \begin{aligned}
      & (a^{f(r)} + p_{r}b)Pa^{r}, \\
      & (a^{f^2(r)} + p_{f(r)}b)Pa^{f(r)}, \\
      & \dots \\
      & (a^{r} + p_{f^{u-1}(r)}b)Pa^{f^{u-1}(r)}, \\
      & bI{0_G},
    \end{aligned}
  \right.
\end{equation*}
from which it will follow that $(a^{r} + (p_{f^{u-1}(r)} + \dots + p_{f(r)} + p_{r})b)Pa^{r}$ by properties (P1) and (P3), then $(p_{f^{u-1}(r)} + \dots + p_{f(r)} + p_{r})bP0_G$ by property (P3), then, finally, $bR0_G$ by property (P4). 

With this as background, we are now in a position to demonstrate Proposition \ref{prop1}.

\begin{proof}[Proof of Proposition \ref{prop1}]
The observation that $A$ contains at least two points as a result of its instability springs from $P$'s irreflexivity.

Let $k = 1, ..., n-1$. We need not concern ourselves with the case where (\ref{$S^{1}$}) exhibits a cycle, for the discussion preceding Lemma \ref{prop2} dealt with this contingency. Rather, we assume no cycles exist. According to Lemma \ref{prop2}, therefore, there exist some indices $m_{k+1}, ..., m_{n}$ in $\{k+1, ..., n\}$ and some integers $p_{k+1}, ..., p_{n} \geqq 1$ for which
\begin{equation*}
  \left\{
    \begin{aligned}
      & (a^{m_{k+1}} + p_{k+1}b)Pa^{k+1}, \\
      & \dots \\
      & (a^{m_{n}} + p_{n}b)Pa^{n}, \\
      & bI{0_G}.
    \end{aligned}
  \right.
\end{equation*}

If $k+1 = n$, then $m_n = n$ and we are done, as Example \ref{ex5} and the ensuing discussion make clear. Let us assume henceforth that $k+1 < n$, which presupposes that $n \geqq 3$. (If $n = 2$, $k+1$ equals $n$ by definition.) 

It is submitted that this system of comparisons, hereafter (SC), is cyclical in the sense outlined in the prelude to this proof. It is also submitted that this fact cannot chime with the assumption that $bI0_G$, hence the system's inconsistency.

%If any $m_{i}$, $i = k+1, ..., n$, satisfies $m_{i} = i$, we are done. The other scenario, which we presently consider, is if $m_{i} \neq i$ for all $i = k+1, ..., n$. 

For each $i = k+1, \dots, n$, let $f(i) = m_i$; $f$ so defined is a permutation of $\{k+1, \dots, n\}$. Start from any index $i$, and consider the sequence of indices generated by iterative application of $f$: \begin{equation*}i, f(i), f^{2}(i), \dots\end{equation*} 
Because the terms of this sequence run over a finite set, there must be at least two of them, say $f^{r}(i)$ and $f^{m}(i)$ with $0 \leqq r \leqq m$, that are identical. The subsequence that spans $f^{r}(i), \dots, f^{m}(i)$ says that \begin{equation*}
  \left\{
    \begin{aligned}
      & (a^{f^{r+1}(i)} + p_{f^{r}(i)}b)Pa^{f^{r}(i)}, \\
      & \dots \\
      & (a^{f^{m}(i)} + p_{f^{m-1}(i)}b)Pa^{f^{m-1}(i)}. \\
    \end{aligned}
  \right.
\end{equation*}
Alternating application of transitivity (P1) and isotonicity (P3) to these $(m-r)$ comparisons (beginning with the bottom comparison and ascending) generates the comparison \begin{equation*}(a^{f^{m}(i)} + (p_{f^{m-1}(i)}+\dots + p_{f^{r}(i)})b)Pa^{f^{r}(i)},\end{equation*}
which is to say \begin{equation*}(p_{f^{m-1}(i)}+\dots + p_{f^{r}(i)})bP0_G,\end{equation*}
because $f^{m}(i) = f^{r}(i)$ and property (P3) holds. Thus $bR0_G$ by property (P4). Therefore, (SC) is inconsistent, and a fortiori so is (\ref{$S^{1}$}).
\end{proof}

%\begin{remark}
%\label{remark_prop_1}
%It was pointed out in the discussion immediately following Propositions \ref{prop1rep} and \ref{prop1} that a proof of the former could be derived rather easily from the above in case the indices $i_j$, $j = 1, ..., n$, satisfied $i_j \neq j$ for all $j$ (the case where $i_j = j$ for some $j$ was dealt with in that passage). This is analogous to the situation above with the $m_i \neq i$. For $n = 2$, if $i_j \neq j$ for each $j = 1, 2$, then $i_1 = 2$ and $i_2 = 1$. Therefore, assuming system (\ref{S^{0}}) were consistent, we would have that $a^{2} = a^{1} + b$ and $a^{1} = a^{2} + b$, ergo $b = 0_{G}$ and this would contradict $bI0_{G}$. To achieve the inductive step, fix $n \geqq 2$ and make the hypothesis that (\ref{S^{0}}) is inconsistent for any choice of $n$ indices such that $i_j \neq j$ for each $j$.   
%\end{remark}

From Propositions \ref{prop1rep} and \ref{prop1} and Theorem \ref{thm4} follows Theorem \ref{thm5}.

\begin{theorem}
\label{thm5}
Under (P1), (P3) and (P4), if $A$ is finite and $B = \{b\}$ with $bI0_G$, then (\ref{E}) fails.
\end{theorem}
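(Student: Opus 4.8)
The plan is to argue by contradiction, reducing the failure of (\ref{E}) to a single containment, namely $\mathscr{E}(A) \subseteq A+B$. The guiding observation is that the efficient points of any set belong to that set, so $\mathscr{E}(A+B) \subseteq A+B$ holds unconditionally; hence if (\ref{E}) were true we would have $\mathscr{E}(A) = \mathscr{E}(A+B) \subseteq A+B$. It therefore suffices to show that $\mathscr{E}(A)$ cannot be a subset of $A+B$. Since $A$ is finite, Theorem \ref{thm4} guarantees $\mathscr{E}(A) \neq \emptyset$, so this containment is a nonvacuous demand and Theorem \ref{thm0} does not short-circuit the matter.

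First I would relabel $A = \{a^1, \dots, a^n\}$ so that $\mathscr{E}(A) = \{a^1, \dots, a^k\}$ for some $1 \leqq k \leqq n$, with $a^{k+1}, \dots, a^n$ the inefficient points (if any). Assuming toward a contradiction that $\mathscr{E}(A) \subseteq A+B = A + \{b\}$, each efficient $a^i$ (for $i = 1, \dots, k$) admits a representation $a^i = a^{j_i} + b$ with $j_i \in \{1, \dots, n\}$; together with the hypothesis $bI0_G$, these equations are precisely the top block shared by systems (\ref{$S^{0}$}) and (\ref{$S^{1}$}).

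Second, I would split on whether $A$ is stable. If $k = n$, every point of $A$ is efficient, and the equations $a^i = a^{j_i}+b$ for $i = 1,\dots,n$ together with $bI0_G$ constitute a consistent instance of (\ref{$S^{0}$}), contradicting Proposition \ref{prop1rep}. If instead $k < n$, then $A$ is unstable, and for each inefficient index $j = k+1,\dots,n$ Theorem \ref{thm4} furnishes an index $i_j \in \{1,\dots,k\}$ with $a^{i_j}Pa^j$. Appending these comparisons to the equations produces a consistent instance of (\ref{$S^{1}$}), contradicting Proposition \ref{prop1}. In either case the supposition $\mathscr{E}(A) \subseteq A+B$ is untenable, and so (\ref{E}) fails.

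Because the analytic weight is borne entirely by Propositions \ref{prop1rep} and \ref{prop1}, the task at this stage is largely organizational: one must verify that the constructed data genuinely instantiate (\ref{$S^{0}$}) or (\ref{$S^{1}$}) with the index ranges those propositions stipulate. The point I expect to demand the most care is the invocation of Theorem \ref{thm4} in the unstable branch: it must deliver dominators drawn from $\mathscr{E}(A) = \{a^1, \dots, a^k\}$, so that $i_j \in \{1, \dots, k\}$ exactly as (\ref{$S^{1}$}) requires, and it must deliver \emph{strict} domination $a^{i_j}Pa^j$ rather than merely $a^{i_j}Ra^j$—this is furnished by the ``in particular'' clause of Theorem \ref{thm4} applicable to points outside the efficient set. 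The degenerate case $n=1$ is subsumed under the stable branch, where (\ref{$S^{0}$}) collapses to $a^1 = a^1 + b$ with $bI0_G$.
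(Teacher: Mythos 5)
Your proposal is correct and takes essentially the same route as the paper's own proof: it reduces (\ref{E}) to the containment $\mathscr{E}(A) \subseteq A+B$, relabels so that $\mathscr{E}(A) = \{a^1,\dots,a^k\}$, and then splits on stability, dispatching the stable case via Proposition \ref{prop1rep} and the unstable case via Theorem \ref{thm4} (which supplies the strict dominators $a^{i_j}Pa^{j}$ with $i_j \in \{1,\dots,k\}$) together with Proposition \ref{prop1}. Your treatment of the singleton case $n=1$ directly through Proposition \ref{prop1rep} is also sound, and is if anything slightly more self-contained than the paper, which defers that case to Theorem \ref{cor1}.
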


\begin{proof}
    Write $A = \{a^1, ..., a^n\}$, $n \geqq 2$. After relabeling the points if necessary, write $\mathscr{E}(A) = \{a^1, ..., a^k\}$, with $k = 1, ..., n$. 
    
    If $A$ is stable, then $\mathscr{E}(A) = A \not\subseteq A+B$ by Proposition \ref{prop1rep}, from which we conclude that $\mathscr{E}(A) \neq \mathscr{E}(A+B)$. 
    
    If $A$ is unstable, then $k < n$, and we let $D(A) = \{a^{k+1}, ..., a^{n}\}$ denote the dominated portion of $A$. Since $A$ is finite, Theorem \ref{thm4} assures us that for each $j = k+1, ..., n$ there exists $i_j = 1, ..., k$ satisfying $a^{i_j}Pa^{j}$. Now, if $\mathscr{E}(A+B) = \mathscr{E}(A)$ were true, we would have that $\mathscr{E}(A) \subseteq A+B$, i.e.,
    \begin{equation*}
  \left\{
    \begin{aligned}
      & a^1 = a^{j_1} + b, \\
      & \dots \\
      & a^k = a^{j_k} + b, \\
    \end{aligned}
  \right.
\end{equation*}
for some $k$ indices $j_1, ..., j_k$ in $\{1, ..., n\}$. But Proposition \ref{prop1} tells us that cannot be so, as we already have $bI0_G$ and $a^{i_j}Pa^{j}$ for all $j = k+1, ..., n$. Consequently, $\mathscr{E}(A) \not \subseteq A+B$, and therefore $\mathscr{E}(A) \neq \mathscr{E}(A+B)$.
\end{proof}

If, on top of the notation of Proposition \ref{prop1}, we let $b^1, ..., b^m$ represent $m$ points in $G$ and $s_1, ..., s_m$ represent a set of indices in $\{1, ..., m\}$, then it can be shown that the modified system

\begin{equation}
\tag{$S^{2}$}
\label{$S^{2}$}
  \left\{
    \begin{aligned}
      & a^1 = a^{j_1} + b^{s_1}, \\
      & \dots \\
      & a^k = a^{j_k} + b^{s_m}, \\
      & a^{i_{k+1}}Pa^{k+1}, \\
      & \dots \\
      & a^{i_{n}}Pa^{n}, \\
      & b^{s_i}I{0_G}, \forall i = 1, ..., m,
    \end{aligned}
  \right.
\end{equation}is inconsistent under (P1), (P3) and (P4) for all $k$, $j_1, ..., j_k$ and $i_{k+1}, ..., i_{n}$. Similarly, if, in Proposition \ref{prop1rep}, we substitute any $b^i$ for $b$ in each equation in system (\ref{$S^{0}$}), it poses no particular difficulty to show that the system 
\begin{equation}
\tag{$S^{3}$}
\label{$S^{3}$}
  \left\{
    \begin{aligned}
      & a^1 = a^{j_1} + b^{l_1}, \\
  & \dots \\
      & a^n = a^{j_n} + b^{l_n}, \\
      & b^{l_i}I{0_G},\ \forall i = 1, \dots, m,
    \end{aligned}
  \right.
\end{equation}
is inconsistent for any choice of $l_1, \dots, l_n \in \{1, \dots, m\}$ and $j_1, \dots, j_n \in \{1, \dots, n\}$ under (P3) and (P4).

The proofs are similar to when $B$ contained a single point. To justify the former system's inconsistency we need an extension of Lemma \ref{prop2} wherein the concept of cycle was adjusted to reflect the changes in system (\ref{$S^{1}$}). We also demonstrate such an extension in the appendix (see Section \ref{sec5}).

\begin{lemma}
\label{prop3}
Given an index $k = 1, ..., n-1$, if (\ref{$S^{2}$}) holds and contains no cycles, then every $a^{i}$, $i = 1, ..., k$, satisfies $a^{i} = a^{j} + p_{i, 1}b^{1} + ... + p_{i, m}b^{m}$ for some $j = k+1, ..., n$ and some set of integers $p_{i, 1}, ..., p_{i, m} \geqq 0$. As a result, each of $a^1, ..., a^{k}$ is expressible as a function of and only of $a^{k+1}, ..., a^{n}$.
\end{lemma}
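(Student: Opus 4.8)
The plan is to follow the proof of Lemma~\ref{prop2} almost line for line, the only genuine change being that the single translation element $b$ is replaced by whichever $b^{s_i}$ is attached to each equation of~(\ref{$S^{2}$}), and that we must keep a ledger of how many copies of each distinct $b^l$ accumulate along the way. First I would fix $k\in\{1,\dots,n-1\}$, suppose that~(\ref{$S^{2}$}) holds and contains no cycles, and extend the definition of a cycle to the present setting: a cycle is now any block of equations
\begin{equation*}
  \left\{
    \begin{aligned}
      & a^{l_1} = a^{l_2} + b^{t_1}, \\
      & \dots \\
      & a^{l_r} = a^{l_1} + b^{t_r},
    \end{aligned}
  \right.
\end{equation*}
with superscripts $l_1,\dots,l_r$ in $\{1,\dots,k\}$ and attached elements $b^{t_1},\dots,b^{t_r}$ drawn, with possible repetition, from $B$.

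Next, for each $i\leqq k$ I would run the same substitution process as in Lemma~\ref{prop2}. The equation of~(\ref{$S^{2}$}) governing $a^i$ reads $a^i=a^{i'}+b^{s_i}$ for some $i'\in\{1,\dots,n\}$. If $i'>k$ we stop; otherwise we substitute the equation governing $a^{i'}$ to obtain $a^i=a^{i''}+b^{s_{i'}}+b^{s_i}$, and so on. After $t$ steps we arrive at $a^i=a^{i_t}+(\text{an ordered product of }t\text{ elements of }B)$, where the index sequence $i=i_0,i_1,i_2,\dots$ stays inside $\{1,\dots,k\}$ until it first lands in $\{k+1,\dots,n\}$.

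The termination argument is where the no-cycles hypothesis earns its keep, and it is essentially identical to the one in Lemma~\ref{prop2}. So long as the index sequence has not escaped, it is confined to the finite set $\{1,\dots,k\}$; were it never to escape, the pigeonhole principle would force a repetition $i_r=i_s$ with $r<s$, and the corresponding block of equations is precisely a cycle in the extended sense above, contradicting our hypothesis. Hence the process terminates with some $i_t=:j\in\{k+1,\dots,n\}$, giving $a^i=a^j+(\text{a product of }t\geqq 1\text{ elements of }B)$. Letting $p_{i,l}$ count the occurrences of $b^l$ among those $t$ factors, we have $p_{i,1}+\dots+p_{i,m}=t\geqq 1$, so the multiplicities are not all zero and $a^i=a^j+p_{i,1}b^1+\dots+p_{i,m}b^m$ in the sense of these counts.

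The one point that demands care---and the main obstacle relative to Lemma~\ref{prop2}---is notational rather than substantive: because $G$ need not be abelian, the accumulated factors $b^{s_{i_{t-1}}}+\dots+b^{s_i}$ cannot literally be rearranged into the canonical block $p_{i,1}b^1+\dots+p_{i,m}b^m$. I would resolve this by observing that property~(P4) is already framed for an \emph{arbitrary} ordered list $b^1,\dots,b^m$ of elements of $B$ of arbitrary length, so the order thrown up by the substitution chain is exactly the shape (P4) consumes; the grouped expression is therefore best read as a record of multiplicities, and the only feature required downstream---that $a^i$ differs from some $a^j$ with $j>k$ by a fixed, nonempty combination of the $b^l$---is insensitive to order. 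This at once yields the closing assertion that each of $a^1,\dots,a^k$ is a function of $a^{k+1},\dots,a^n$ alone, since $a^j$ together with the fixed elements of $B$ determines $a^i$.
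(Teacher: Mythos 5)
Your proof is correct and follows essentially the same route as the paper's proof of Lemma \ref{prop3}: the same extended definition of cycles for (\ref{$S^{2}$}), the same iterative substitution chain $a^i = a^{i_t} + (\text{ordered word in elements of } B)$, and the same pigeonhole argument showing that non-termination would force a cycle, contradicting the hypothesis. Your closing remark on non-commutativity is a refinement rather than a departure---the paper's own proof likewise produces only the ordered word $b^{s} + b^{s'} + \dots + b^{s^{(t)}}$ and tacitly reads the grouped expression $p_{i,1}b^1 + \dots + p_{i,m}b^m$ as a record of multiplicities, and your observation that (P4) consumes arbitrary ordered lists (taking all coefficients equal to $1$ with repeated entries from $B$) correctly justifies why this looseness is harmless downstream.
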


\begin{proof}
See Section \ref{sec5_1}.
\end{proof}

\begin{proposition}
\label{prop_revision_1}
Assume (P1), (P3) and (P4). Suppose $A$ is a finite unstable set, so that it contains at least two points. Let $A = \{a^1, ..., a^n\}$, $n \geqq 2$, and $b_1, \dots, b_m \in G$, $m \geqq 2$. Let $s_1, ..., s_m$ represent a set of indices in $\{1, ..., m\}$. Then, for each $k = 1, ..., n-1$, for each set of indices $j_1, ..., j_k$ in $\{1, ..., n\}$, and for each set of indices $i_{k+1}, ..., i_{n}$ in $\{1, ..., k\}$, the system (\ref{$S^{2}$}) is inconsistent.
\end{proposition}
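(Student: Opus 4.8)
The plan is to transplant the proof of Proposition~\ref{prop1} essentially verbatim, replacing the single generator $b$ throughout by the finite family $b^1, \dots, b^m$ and replacing Lemma~\ref{prop2} by its multi-generator analogue, Lemma~\ref{prop3}. As in Proposition~\ref{prop1}, I would fix $k \in \{1, \dots, n-1\}$ and split the argument according to whether or not the first $k$ (equation) rows of~(\ref{$S^{2}$}) contain a cycle, where a cycle is now a block of rows $a^{l_1} = a^{l_2} + b^{c_1}, \dots, a^{l_r} = a^{l_1} + b^{c_r}$ with $l_1, \dots, l_r \in \{1, \dots, k\}$.

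Suppose first that such a cycle is present. Back-substituting around it collapses the block to $a^{l_1} = a^{l_1} + (b^{c_r} + \dots + b^{c_1})$, and left-cancelling $a^{l_1}$ (legitimate since $G$ is a group) forces the accumulated element to equal $0_G$. Because every summand is one of the incomparable generators, this element is a combination $p_1 b^1 + \dots + p_m b^m$ of positive total multiplicity $r \geqq 1$. Reflexivity of $R$ then gives $(p_1 b^1 + \dots + p_m b^m) R 0_G$, so property~(P4) yields an index $l$ with $b^l R 0_G$. This contradicts the final row of~(\ref{$S^{2}$}), which asserts $b^{s_i} I 0_G$ for every generator occurring in the system.

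If no cycle is present, I would invoke Lemma~\ref{prop3} to rewrite each efficient-block point $a^i$, $i = 1, \dots, k$, as $a^{j} + p_{i,1} b^1 + \dots + p_{i,m} b^m$ for some $j = j(i) \in \{k+1, \dots, n\}$ and integers $p_{i,t} \geqq 0$ of positive total (since $a^i$ and $a^j$ are distinct points of $A$). Substituting these expressions into the dominance rows $a^{i_j} P a^j$, $j = k+1, \dots, n$, produces comparisons $(a^{m_j} + C_j) P a^j$ with $m_j \in \{k+1, \dots, n\}$ and each $C_j$ a nonnegative combination of $b^1, \dots, b^m$ of positive total. Setting $f(j) = m_j$ defines a self-map of the finite set $\{k+1, \dots, n\}$; iterating $f$ from any index must revisit a value, exposing a cyclic chain of these comparisons exactly as in Proposition~\ref{prop1}. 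Ascending the chain by alternating (P1) and (P3) collapses it to $(a^r + C) P a^r$ for some $r$ and some nonnegative combination $C$ of the $b^t$ of positive total; property~(P3) then gives $C P 0_G$, whence $C R 0_G$, and (P4) once more extracts an $l$ with $b^l R 0_G$, contradicting $b^l I 0_G$. In either branch~(\ref{$S^{2}$}) is inconsistent, which is the claim.

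The genuinely new difficulty relative to Proposition~\ref{prop1} is that the element accumulated around a cycle is no longer a pure power $pb$ but a mixed combination $\sum_{t} p_t b^t$; this is precisely why the full strength of~(P4), rather than its single-generator shadow~(P4'), is needed, and why Lemma~\ref{prop3} must replace Lemma~\ref{prop2}. I expect the main obstacle to be a bookkeeping subtlety that is invisible in the single-generator case: since $G$ is not assumed abelian, the summands collected along a cycle appear in a fixed but generally non-grouped order, so that the literal sum $b^{c_r} + \dots + b^{c_1}$ need not coincide with the grouped form $p_1 b^1 + \dots + p_m b^m$ to which~(P4) refers. Resolving this cleanly requires reading~(P4) as a statement about the multiset of generators that occur (equivalently, closing the argument under reordering), after which the extraction of a single $b^l$ with $b^l R 0_G$ goes through unchanged.
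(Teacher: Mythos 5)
Your proposal is correct and follows essentially the same route as the paper's proof: the cycle case is dispatched directly by cancellation, reflexivity and (P4), and the acyclic case goes through Lemma~\ref{prop3}, the permutation $f$, the pigeonhole principle, and alternating (P1)/(P3) to collapse the chain before (P4) extracts some $b^l R 0_G$ contradicting $b^l I 0_G$. The non-abelian ordering ``obstacle'' you flag in fact dissolves on a careful reading of (P4), which quantifies over arbitrary, not necessarily distinct, $b^1, \dots, b^m \in B$, so the literal ordered sum accumulated around a cycle is already of the form $p_1 b^1 + \dots + p_M b^M$ with all $p_i = 1$ and no regrouping is needed.
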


\begin{proof}
See Section \ref{sec5_2}.
\end{proof}

\begin{proposition}
\label{prop_revision_2}
Assume (P3) and (P4). Let $A = \{a^1, ..., a^n\}$, $n \geqq 2$, and $b_1, \dots, b_m \in G$, $m \geqq 2$. Then, for each set of indices $j_1, ..., j_n$ in $\{1, ..., n\}$ and for each set of indices $l_{1}, ..., l_{n}$ in $\{1, ..., m\}$, the system (\ref{$S^{3}$}) is inconsistent.
\end{proposition}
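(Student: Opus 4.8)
The plan is to carry over, essentially verbatim, the cycle-and-pigeonhole argument of Proposition~\ref{prop1rep}, the one new feature being that the equations of $(S^{3})$ now carry possibly different elements $b^{l_1}, \dots, b^{l_n}$ of $B$. Assuming $(S^{3})$ to hold, I would first define the self-map $f\colon \{1,\dots,n\}\to\{1,\dots,n\}$ by $f(i)=j_i$, fix any reference index $i_0$, and form the list $i_0, f(i_0), \dots, f^{n}(i_0)$ of $n+1$ entries drawn from an $n$-element set. The pigeonhole principle then delivers $0\leqq u<v\leqq n$ with $f^{u}(i_0)=f^{v}(i_0)$, and setting $c_1:=f^{u}(i_0)$ produces a cycle $c_1, c_2:=f(c_1), \dots, c_r:=f^{r-1}(c_1)$ of length $r=v-u$ closing up as $f(c_r)=c_1$. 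Here $f$ need not be injective, so ``cycle'' is understood in the functional-graph sense of an eventually periodic orbit; pigeonhole supplies one regardless. Reading off the associated equations of $(S^{3})$ gives $a^{c_s}=a^{c_{s+1}}+b^{l_{c_s}}$ for $s=1,\dots,r$, with indices taken modulo $r$.

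Next I would telescope this block by repeated substitution and associativity, reaching $a^{c_1}=a^{c_1}+\bigl(b^{l_{c_r}}+\dots+b^{l_{c_1}}\bigr)$, and then left-cancel $a^{c_1}$ (permissible because $G$ is a group) to obtain $b^{l_{c_r}}+\dots+b^{l_{c_1}}=0_G$. By reflexivity this forces $\bigl(b^{l_{c_r}}+\dots+b^{l_{c_1}}\bigr)\,R\,0_G$. The closing step is property (P4): the left-hand side is a finite sum of elements of $B$, hence an instance of the template $p_1\beta^1+\dots+p_t\beta^t$ with $\beta^{\tau}\in B$ that (P4) quantifies over, it suffices to take every multiplicity equal to $1$ (or to group maximal runs of identical consecutive summands). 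Property (P4) then furnishes an index $s$ with $b^{l_{c_s}}\,R\,0_G$, contradicting the stipulation $b^{l_{c_s}}\,I\,0_G$ recorded in $(S^{3})$. Hence $(S^{3})$ is inconsistent.

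The main obstacle, and the sole point at which the generalization demands care, is that $G$ is not assumed abelian, so the telescoped remainder $b^{l_{c_r}}+\dots+b^{l_{c_1}}$ is a genuine word whose factors cannot be permuted into the neatly blocked shape $p_1b^1+\dots+p_mb^m$ literally exhibited in (P4). The resolution is exactly the observation that (P4) imposes no distinctness condition on its $b^1,\dots,b^m$, so any such word already conforms to its hypothesis once all exponents are set to $1$; this is what makes full (P4), rather than its single-generator shadow (P4$'$), the operative assumption, since the cycle may well splice together distinct elements of $B$. A secondary item to state explicitly in the non-commutative setting is that the cancellation must be performed on the left, the remainder sitting to the right of $a^{c_1}$. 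It is worth remarking, finally, that property (P3) is never actually invoked: the whole argument rests only on pigeonhole, group cancellation, reflexivity, and (P4), exactly as in the single-summand case of Proposition~\ref{prop1rep}.
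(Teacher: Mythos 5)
Your proof is correct and takes essentially the same route as the paper's, which likewise constructs a cycle in $(S^{3})$ via the pigeonhole argument of Proposition~\ref{prop1rep}, cancels to obtain $0_G = b^{s_1} + \dots + b^{s_r}$, and applies (P4) (with all multiplicities equal to $1$) to extract some $b^{s_i}R0_G$, contradicting $b^{s_i}I0_G$. Your version is simply more explicit about the non-injectivity of $f$, the left-cancellation in the non-abelian setting, and the use of reflexivity (the paper's proof says ``symmetric'' where reflexivity is what is actually invoked), and your observation that (P3) is never needed matches the paper's argument as well.
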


\begin{proof}
See Section \ref{sec5_3}.
\end{proof}

%When we extend Proposition \ref{prop1} to system (\ref{$S^{2}$}), it becomes possible to prove the following theorem along the same lines as Theorem \ref{thm5}.
The upshot is that Propositions \ref{prop_revision_1} and \ref{prop_revision_2} and Theorem \ref{thm4} yield Theorem \ref{thm6} in much the same way that Propositions \ref{prop1rep} and \ref{prop1} and Theorem \ref{thm4} yielded Theorem \ref{thm5}.

\begin{theorem}
\label{thm6}
Under identical hypotheses to those of Theorem \ref{thm5}, if $B = \{b^{1}, ..., b^{m}\}$, $m \geqq 2$, such that $b^{i}I0_G$ for each $i = 1, ..., m$, then (\ref{E}) fails.
\end{theorem}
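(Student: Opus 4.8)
The plan is to follow verbatim the template of the proof of Theorem~\ref{thm5}, substituting the multi-summand propositions for their single-summand counterparts. Writing $A = \{a^1, \dots, a^n\}$ with $n \geqq 2$ and, after relabeling the points if necessary, $\mathscr{E}(A) = \{a^1, \dots, a^k\}$ for some $k = 1, \dots, n$, I would argue by contradiction from the hypothesis $\mathscr{E}(A+B) = \mathscr{E}(A)$, which forces the inclusion $\mathscr{E}(A) \subseteq A+B$. The crucial observation --- and the only point where the passage from one summand to many demands care --- is that membership $g \in A+B$ now reads $g = a + b^{l}$ for \emph{some} $a \in A$ and \emph{some} index $l \in \{1, \dots, m\}$, rather than for the fixed $b$ of Theorem~\ref{thm5}. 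Thus each efficient point $a^i$ acquires a representation $a^i = a^{j_i} + b^{s_i}$ in which both the base index $j_i$ and the summand index $s_i$ are free to vary with $i$.

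In the stable case ($k = n$), the inclusion $\mathscr{E}(A) = A \subseteq A+B$ yields, for each $i = 1, \dots, n$, an equation $a^i = a^{j_i} + b^{l_i}$ with $j_i \in \{1, \dots, n\}$ and $l_i \in \{1, \dots, m\}$. Together with the hypothesis $b^{l}I0_G$ for every $l$, these equations constitute precisely an instance of system~(\ref{$S^{3}$}), which Proposition~\ref{prop_revision_2} declares inconsistent. This contradiction establishes $\mathscr{E}(A) \not\subseteq A+B$, and hence $\mathscr{E}(A) \neq \mathscr{E}(A+B)$. In the unstable case ($k < n$), finiteness of $A$ permits an appeal to Theorem~\ref{thm4} to secure, for each dominated index $j = k+1, \dots, n$, an index $i_j \in \{1, \dots, k\}$ with $a^{i_j}Pa^{j}$. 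Pairing these domination relations with the representations $a^i = a^{j_i} + b^{s_i}$ of the efficient points ($i = 1, \dots, k$) produces exactly an instance of system~(\ref{$S^{2}$}), which Proposition~\ref{prop_revision_1} rules out. Again the contradiction delivers $\mathscr{E}(A) \not\subseteq A+B$, completing the argument.

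Since the combinatorial heart of the matter has already been discharged in Propositions~\ref{prop_revision_1} and~\ref{prop_revision_2}, I do not anticipate any substantive obstacle: the theorem reduces to a short deduction once these propositions are in hand. The principal thing to watch is the bookkeeping of the doubly-indexed representations, ensuring that the free choice of summand index $s_i$ (respectively $l_i$) is faithfully transcribed into the hypotheses of systems~(\ref{$S^{2}$}) and~(\ref{$S^{3}$}), so that the inconsistency results apply without any further adjustment. In particular, one must verify that the indices $s_1, \dots, s_m$ and $l_1, \dots, l_n$ appearing in those systems are permitted to range freely over $\{1, \dots, m\}$ --- which they are, precisely because a point of $A+B$ may be obtained from any one of the summands $b^1, \dots, b^m$.
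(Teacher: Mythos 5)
Your proposal is correct and takes essentially the same route as the paper's own proof: the identical stable/unstable case split, with the stable case dispatched by Proposition \ref{prop_revision_2} via system (\ref{$S^{3}$}), and the unstable case by combining Theorem \ref{thm4}'s domination indices with the representations $a^i = a^{j_i} + b^{l_i}$ to invoke Proposition \ref{prop_revision_1} via system (\ref{$S^{2}$}). Your attention to the freely varying summand index $l_i$ matches exactly how the paper transcribes the inclusion $\mathscr{E}(A) \subseteq A+B$ into those systems, so no adjustment is needed.
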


\begin{proof}
See Section \ref{sec5_4}.
\end{proof}

\begin{remark}
\label{rem_thm6}
Theorems \ref{thm5} and \ref{thm6} do not extend to infinite $A$; see Examples \ref{ex1} and \ref{ex2}.
\end{remark}

We turn, finally, to the case when $0_G \in B$ and $bI0_{G}$ for all $b \in B \backslash \{0_G\}$. If $0_{G} \in B$, then, contrary to what we have seen before, all efficient points in $A$ must belong to $A+B$. Our main conclusions, which we shall substantiate presently, can be stated as follows.

\begin{theoremnine}
Suppose $A$ is finite. Under (P1), (P3) and (P5), if $A$ is stable and there exists $b \in G$ such that $B = \{0_{G}, b\}$ and $bI0_{G}$, then (\ref{E}) fails.
\end{theoremnine}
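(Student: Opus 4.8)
The plan is to establish the failure of (\ref{E}) by showing that, under the stated hypotheses, at least one point of $(A+b)\setminus A$ is efficient in $A+B$. Since $A$ is stable we have $\mathscr{E}(A)=A$, so any such point, lying outside $A$, would belong to $\mathscr{E}(A+B)\setminus\mathscr{E}(A)$, which is exactly the negation of (\ref{E}). Two preliminary observations set up the combinatorics. First, $A+B=A\cup(A+b)$, and by the standing assumption $A+B\neq A$ the set $(A+b)\setminus A$ is nonempty. Second, $bI0_G$ forbids any ``translation cycle'': if $a+rb=a$ for some $a\in A$ and integer $r\geqq 1$, then $rb=0_G$, whence $0_GR(rb)$ by reflexivity and $0_GRb$ by (P5'), contradicting $bI0_G$. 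Consequently the partial map $a\mapsto a+b$ (defined on those $a$ with $a+b\in A$) organizes the finite set $A$ into disjoint directed paths; I will call $a$ a \emph{path-end} when $a+b\notin A$, so that the points of $(A+b)\setminus A$ are precisely the shifts $a+b$ of path-ends.

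Suppose, for contradiction, that no point of $(A+b)\setminus A$ is efficient in $A+B$. Because $A$ is finite, Theorem \ref{thm4} applies under (P1): each such inefficient point is dominated by an efficient point of $A+B$, and since by supposition no fresh point $(A+b)\setminus A$ is efficient, that dominator must lie in $A$. Writing $A=\{a^1,\ldots,a^n\}$, I thereby obtain, for each path-end $a^i$, an index $\kappa(i)$ with $a^{\kappa(i)}P(a^i+b)$, while for each non-path-end $i$ I record the translation equation $a^i+b=a^{\sigma(i)}\in A$. This is the dual of the system treated in Propositions \ref{prop1rep} and \ref{prop1}: the dominations now sit on the \emph{shifted} points, and (P5) is poised to play the role that (P4) played there.

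The heart of the argument—and the step I expect to be the main obstacle—is to show that these relations must close into a cycle that produces a multiple of $b$ comparable to $0_G$. Given a path-end $i$, let $\Phi(i)$ be the path-end reached by following the translation equations forward from $a^{\kappa(i)}$, so that $a^{\Phi(i)}=a^{\kappa(i)}+t(i)b$ for some $t(i)\geqq 0$; adding $t(i)b$ to $a^{\kappa(i)}P(a^i+b)$ via (P3) yields $a^{\Phi(i)}P(a^i+s(i)b)$ with $s(i)=t(i)+1\geqq 1$. Since $\Phi$ is a self-map of the finite nonempty set of path-ends, its orbit from any starting index meets a cycle $i_0,\,i_1=\Phi(i_0),\,\ldots,\,i_L=i_0$. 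Chaining the comparisons $a^{i_{l+1}}P(a^{i_l}+s(i_l)b)$ around this cycle by alternating (P1) and (P3)—exactly the manoeuvre used in the proof of Proposition \ref{prop1}—produces $a^{i_0}P(a^{i_0}+Sb)$ with $S=\sum_{l}s(i_l)\geqq 1$, and cancelling $a^{i_0}$ on the left by (P3) gives $0_GP(Sb)$. In particular $0_GR(Sb)$, so (P5') forces $0_GRb$, contradicting $bI0_G$. The supposition therefore collapses, some shifted path-end is efficient in $A+B$, and (\ref{E}) fails. (Stability of $A$ enters only through $\mathscr{E}(A)=A$, which is what converts ``efficient in $A+B$ but outside $A$'' into a genuine discrepancy between the two efficient sets.)
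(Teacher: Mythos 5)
Your proof is correct, and it takes a genuinely different route from the paper's. The paper proves Theorem \ref{thm7} by assuming (\ref{E}) and splitting into two situations according to whether some coincidence $a^{i}=a^{j}+b$ occurs: the coincidence-free case is handed off to Proposition \ref{prop4}, while the mixed case is handled by cycling directly through a system in which a symbol $X$ stands for either $P$ or $=$. You avoid the case split altogether: after using (P5') and $bI0_G$ to rule out translation cycles, you decompose $A$ into directed $b$-paths, fold every equality $a^{i}+b=a^{\sigma(i)}$ into the dominations via (P3), and thereby reduce the whole configuration to a homogeneous system of $P$-comparisons indexed by path-ends --- in effect transplanting the elimination device of Lemma \ref{prop2} and Proposition \ref{prop1}, which the paper deploys only on the (P4) side (Theorems \ref{thm5} and \ref{thm6}), to the (P5) side. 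From there your cycle-closing engine (pigeonhole on the self-map $\Phi$ of the finite set of path-ends, telescoping by alternating (P1) and (P3), left cancellation by (P3), then (P5')) coincides with that of Proposition \ref{prop4}, and all the steps check out, including in the non-abelian case, since you only ever add multiples of $b$ on a fixed side. Your organization buys three things: it requires dominators only for the shifted path-ends rather than for all $n$ shifted points; it yields the sharper conclusion that $\mathscr{E}(A+B)$ must contain a point of $(A+b)\setminus A$; and it isolates the stability hypothesis in the final bookkeeping step --- indeed, since any point of $(A+b)\setminus A$ automatically lies outside $\mathscr{E}(A)\subseteq A$ whether or not $A$ is stable, your argument in fact establishes the theorem without stability at all, which the paper's statement does not claim. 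What the paper's mixed-system formulation buys in exchange is a proof that generalizes essentially verbatim to $B=\{0_G,b^{1},\dots,b^{m}\}$ in Theorem \ref{thm8} via Proposition \ref{prop4_gen}, whereas your path decomposition is tailored to a single $b$ and would need adaptation (in the spirit of Lemma \ref{prop3}) before it could handle several incomparable shifts at once.
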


\begin{theoremten}
Under identical hypotheses to those of Theorem \ref{thm7}, if $A$ is stable and $B = \{0_{G}, b^{1}, ..., b^{m}\}$, $m \geqq 2$, where $b^{i}I0_{G}$ for each $i = 1, ..., m$, then (\ref{E}) fails.
\end{theoremten}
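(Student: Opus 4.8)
The plan is to argue by contradiction: assuming (\ref{E}) holds, I would manufacture a cycle of weak dominations inside $A$ whose cumulative displacement is a positive multiple $Lb^1$ of a single fixed generator, and then invoke (P5$'$) to contradict incomparability. Since $A$ is stable we have $\mathscr{E}(A)=A$, and since $0_G\in B$ we have $A=A+\{0_G\}\subseteq A+B$; hence the equality $\mathscr{E}(A+B)=\mathscr{E}(A)$ reduces to $\mathscr{E}(A+B)=A$. I would then fix one generator $b:=b^1$ and show it alone forces a contradiction, so that $b^2,\dots,b^m$ turn out to be inert and the argument is essentially that of Theorem 9.

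The first step is to construct a successor map $\nu\colon A\to A$. As $A$ and $B$ are finite, so is $A+B$, and Theorem \ref{thm4} applies under (P1): every point of $A+B$ is weakly dominated by a point of $\mathscr{E}(A+B)=A$. For each $a\in A$ the element $a+b$ belongs to $A+B$, so I may pick $\nu(a)\in A$ with $\nu(a)R(a+b)$ — taking $\nu(a)=a+b$ by reflexivity when $a+b\in A$, and otherwise an efficient, hence $A$-valued, point strictly dominating $a+b$ by Theorem \ref{thm4}. The crucial feature, and the place where stability together with $0_G\in B$ enters, is that the dominator is guaranteed to lie back in $A$.

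I would then iterate $\nu$ from an arbitrary point. Because $A$ is finite, the orbit repeats, and a pigeonhole argument of precisely the kind used in the proof of Proposition \ref{prop1rep} extracts a cycle $a_0\to a_1\to\dots\to a_{L-1}\to a_0$ with $a_{t+1}R(a_t+b)$ for every $t$. Accumulating these $L$ relations by repeatedly appending $b$ on the right — permissible by isotonicity (P3), and untroubled by a possibly non-abelian $G$ since the same element $b$ is added each time — and chaining by transitivity (P1) yields $a_0R(a_0+Lb)$; cancelling $a_0$ on the left, once more via (P3), gives $0_GR(Lb)$ with $L\geqq 1$. Property (P5$'$), the scalar consequence of (P5) recorded in Section \ref{sec2}, then delivers $0_GRb$, contradicting $bI0_G$, whence (\ref{E}) fails.

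I expect the main obstacle to be not the cyclic accumulation, which is routine, but the verification that $\nu$ is well defined as a map \emph{into} $A$. This is exactly the step that leans on $A$ being its own efficient set in combination with Theorem \ref{thm4}: the hypothesis $0_G\in B$ forces $A\subseteq A+B$, so that each $a+b$ is a genuine member of $A+B$ whose guaranteed dominator is an efficient point of $A+B$, that is, a point of $A$. It is worth recording that the proof consumes only a single generator and only (P5$'$) rather than the full strength of (P5), so that Theorem 10 is in the end no more difficult than its one-generator predecessor.
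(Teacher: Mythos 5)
Your proof is correct, and it takes a genuinely cleaner route than the paper's. The paper proves this theorem (its Theorem \ref{thm8}) by a case split on whether some $a^{i}$ equals $a^{j}+b^{l}$: in Situation 1 it invokes the inconsistency of system (\ref{$S^{5}$}) via Proposition \ref{prop4_gen}, and in Situation 2 it assembles a mixed system of $P$- and $=$-relations and runs a pigeonhole cycle argument through a map which, after its first step, always follows the generator $b^{1}$. Your successor map $\nu$ absorbs both situations at once: Theorem \ref{thm4}, applied to the finite set $A+B$, supplies for every $z \in A+B$ a point of $\mathscr{E}(A+B)=A$ weakly dominating $z$ (reflexivity covering the case $a+b^{1}\in A$, strict domination the case $a+b^{1}\notin A$), so chaining with $R$ throughout under (P1) and (P3) replaces the paper's bookkeeping with the symbol $X \in \{P, =\}$, and eliminates both the case split and the auxiliary proposition. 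The cycle extraction, the accumulation to $a_{0}R(a_{0}+Lb^{1})$, the left cancellation to $0_{G}R(Lb^{1})$, and the appeal to (P5') are all sound, including over a non-abelian $G$, since the same element $b^{1}$ is appended on the right at every step; and your single-generator reduction mirrors what the paper does implicitly (its map $g$ funnels through $b^{1}$ after the first relation), so $b^{2},\dots,b^{m}$ are indeed inert in both arguments. One small correction to your commentary rather than to your proof: the hypothesis $0_{G}\in B$ is not what makes $a+b^{1}$ a member of $A+B$ — that holds for any $b\in B$ — and in fact your argument never uses $0_{G}\in B$ at all. All it consumes is that (\ref{E}) together with stability forces $\mathscr{E}(A+B)=A$, so you have actually proved the slightly stronger statement that, under (P1), (P3) and (P5'), equality (\ref{E}) fails whenever $A$ is finite and stable, $B$ is finite, and $B$ contains at least one element incomparable with $0_{G}$; the same mechanism also shows that $A+B=A$ is impossible under these hypotheses, so the paper's standing assumption $A+B\neq A$ is automatic here.
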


Remarks \ref{rem_thm7} and \ref{rem_thm8} will underscore that these theorems do not generalize to infinite $A$.

We begin with two important preliminaries. Proposition \ref{prop4} presents a logical inconsistency result akin to Propositions \ref{prop1rep} and \ref{prop1}. %Lemma \ref{lem3} is rather technical, and will come into play when assessing certain anomalous points in $A+B$ for efficiency.   

\begin{proposition}
\label{prop4}
%Suppose, in addition to the hypotheses already imposed on $R$, that $0_{G}P(pb)$ implies $0_{G}Pb$ for all $p \geqq 1$ and $b \in G$. 
Assume (P1), (P3) and (P5). Let $a^1, ..., a^n$ be $n \geqq 2$ distinct points in $G$, and let $b \in B$. Form a set of indices $i_1, ..., i_n$ from $\{1, ..., n\}$ with $i_j \neq j$ for each $j$. Then the system
\begin{equation}
\tag{$S^{4}$}
\label{$S^{4}$}
  \left\{
    \begin{aligned}
      & a^{i_1}P(a^{1} + b), \\
      & \dots \\
      & a^{i_{n}}P(a^{n} + b), \\
      & bI{0_G},
    \end{aligned}
  \right.
\end{equation}
is inconsistent.
\end{proposition}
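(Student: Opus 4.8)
The plan is to transpose the cyclic argument from the proofs of Propositions~\ref{prop1rep} and~\ref{prop1} to the present setting, the essential differences being that the chain is assembled from strict comparisons rather than equations and that property (P5), not (P4), supplies the terminal contradiction. The structural fact that drives everything is that the assignment $g(j) = i_j$ defines a self-map of the finite set $\{1, \dots, n\}$ \emph{without fixed points}, since $i_j \neq j$ for every $j$.

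First I would produce a cycle of $g$. Iterating $g$ from an arbitrary reference index $j_0$ yields a sequence $j_0, g(j_0), g^2(j_0), \dots$ taking values in $\{1, \dots, n\}$; by the pigeonhole principle two of its terms coincide, and extracting the repeated segment gives indices $j_1, \dots, j_r$ with $g(j_s) = j_{s+1}$ for $s < r$ and $g(j_r) = j_1$. The absence of fixed points forces the cycle length $r \geq 2$, although only $r \geq 1$ is needed below. Since $g(j_s) = i_{j_s} = j_{s+1}$, the matching rows of (\ref{$S^{4}$}) are exactly $a^{j_{s+1}} P (a^{j_s} + b)$ for $s = 1, \dots, r$, with the convention $j_{r+1} = j_1$.

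Next I would collapse these $r$ comparisons into one. By induction on $s$ I would prove $a^{j_{s+1}} P (a^{j_1} + sb)$: the base case $s = 1$ is the first row itself, and for the inductive step I would apply (P3) to $a^{j_{s+1}} P (a^{j_1} + sb)$ by translating on the right by $b$, obtaining $(a^{j_{s+1}} + b) P (a^{j_1} + (s+1)b)$, and then splice in the row $a^{j_{s+2}} P (a^{j_{s+1}} + b)$ via transitivity (P1). Here I use that (P1) and (P3), holding of $R$, also hold of $P$. Closing the cycle at $s = r$ gives $a^{j_1} P (a^{j_1} + rb)$. Left-translating this by the group inverse $-a^{j_1}$ through (P3) yields $0_{G} P (rb)$, whence $0_{G} R (rb)$, and property (P5) in its reduced form (P5') then delivers $0_{G} R b$, in direct contradiction with $bI0_{G}$. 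This proves (\ref{$S^{4}$}) inconsistent.

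I expect the combinatorial core---the guaranteed existence of a cycle---to be routine, inherited essentially verbatim from Proposition~\ref{prop1rep}. The one point requiring care is the bookkeeping in the possibly non-abelian group $G$: the right translations during the chaining and the left translation in the final cancellation must be kept on their correct sides, since the identity $-a^{j_1} + (a^{j_1} + rb) = rb$ rests on associativity and left cancellation rather than on commutativity. A secondary subtlety worth flagging is that the direction of domination here is reversed relative to Proposition~\ref{prop1}---the translated term $a^j + b$ is the \emph{dominated} element---which is precisely why the closing step must invoke (P5') rather than (P4').
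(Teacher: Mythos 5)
Your proof is correct and takes essentially the same approach as the paper's: iterate the index map $j \mapsto i_j$, extract a cycle by pigeonhole, collapse the strict comparisons via (P1) and (P3) into $a^{j_1}P(a^{j_1}+rb)$, cancel by the group inverse to get $0_G$ compared with a positive multiple of $b$, and finish with (P5) against $bI0_G$. The only cosmetic differences are that you chain forward by induction where the paper ascends from the bottom comparison, and that you weaken to $0_GR(rb)$ and apply (P5') while the paper applies (P5) directly to $P$ (incidentally, your multiple $rb$ is the correct count; the paper's $(m-r+1)b$ is an immaterial off-by-one).
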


\begin{proof}
Suppose system (\ref{$S^{4}$}) were consistent. Define the permutation $f:\{1, \dots, n\} \rightarrow \{1, \dots, n\}$ that maps each $j = 1, \dots, n$ to the index $i_j$ which appears on the left-hand side of the comparison $a^{i_j}P(a^{j}+b)$. Fix any index $j_0$ and consider the sequence \begin{equation*}j_0, j_1 := f(j_0), j_2 := f^2(j_0), \dots, j_n:= f^{n}(j_0).\end{equation*}

This sequence says that \begin{equation*}
  \left\{
    \begin{aligned}
      & a^{j_{1}}P(a^{j_0} + b), \\
      & a^{j_{2}}P(a^{j_{1}}+b), \\
      & \dots \\
      & a^{j_n}P(a^{j_{n-1}} + b), \\
    \end{aligned}
  \right.
\end{equation*} 

Now each element in the sequence ranges in $\{1, \dots, n\}$, while the sequence itself runs to $(n+1)$ members. By the pigeonhole principle, therefore, there must exist a pair of integers $0 \leqq r < m$ with $j_r = j_m$. The subsequence starting at $j_r$ and culminating in $j_m$ forms a cycle. Indeed, because this subsequence means \begin{equation*}
  \left\{
    \begin{aligned}
      & a^{j_{r+1}}P(a^{j_r} + b), \\
      & a^{j_{r+2}}P(a^{j_{r+1}}+b), \\
      & \dots \\
      & a^{j_m}P(a^{j_{m-1}} + b), \\
    \end{aligned}
  \right.
\end{equation*}
holds, and inasmuch as $j_m = j_r$, iterative application of transitivity (P1) and isotonicity (P3) to these comparisons (beginning with the bottom comparison and ascending) generates the comparison \begin{equation*}a^{j_r}P(a^{j_r}+(m-r+1)b).\end{equation*} When we apply property (P3) to this very relation, we find that \begin{equation*}0_GP(m-r+1)b,\end{equation*}
so that $0_GPb$ by property (P5). But this cannot be so, since $bI0_{G}$. We infer from this that (\ref{$S^{4}$}) is inconsistent, precisely as the proposition avers. 
\end{proof}

\begin{theorem}
\label{thm7}
Suppose $A$ is finite. Under (P1), (P3) and (P5), if $A$ is stable and there exists $b \in G$ such that $B = \{0_{G}, b\}$ and $bI0_{G}$, then (\ref{E}) fails.
\end{theorem}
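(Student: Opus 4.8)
The plan is to argue by contradiction, assuming (\ref{E}) holds, and to exploit the stability of $A$ together with Theorem \ref{thm4} and the pigeonhole mechanism behind Proposition \ref{prop4}. Since $A$ is stable we have $\mathscr{E}(A)=A$, and since $0_G\in B$ every $a\in A$ satisfies $a=a+0_G\in A+B$, so $A\subseteq A+B$ and $A+B=A\cup(A+b)$ is finite. Assuming (\ref{E}) thus amounts to assuming $\mathscr{E}(A+B)=A$. Write $A=\{a^1,\dots,a^n\}$; the case $n=1$ is immediate (and is in any event covered by Theorem \ref{cor1}), so I take $n\geqq 2$.

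The first step is to manufacture a self-map of the index set. For each $j$ the point $a^j+b$ lies in the finite set $A+B$, so Theorem \ref{thm4} (which needs only (P1) and finiteness) supplies an efficient point $R$-dominating it; under the standing assumption that efficient point lies in $\mathscr{E}(A+B)=A$, say it is $a^{h(j)}$, so that $a^{h(j)}R(a^j+b)$. This defines $h:\{1,\dots,n\}\to\{1,\dots,n\}$. The key point is that $h$ has no fixed point: if $h(j)=j$ then $a^jR(a^j+b)$, and left-translating by $-a^j$ and invoking (P3) yields $0_GRb$, in flat contradiction with $bI0_G$. I would stress that I deliberately use the non-strict relation $R$ furnished by Theorem \ref{thm4}, rather than strict domination, so that the construction goes through uniformly whether or not $a^j+b$ happens to coincide with a point of $A$.

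The second step is to extract a cycle and close the argument exactly as in Proposition \ref{prop4}. Iterating $h$ from any index and appealing to the pigeonhole principle produces indices $c_0,\ c_1=h(c_0),\ \dots,\ c_{L-1},\ c_L=c_0$ with $L\geqq 2$ (length at least two because $h$ is fixed-point-free), along which $a^{c_{t+1}}R(a^{c_t}+b)$ holds for every $t$. Alternating transitivity (P1) and isotonicity (P3) up this chain, precisely as in the proof of Proposition \ref{prop4}, collapses it to $a^{c_0}R(a^{c_0}+Lb)$; one more application of (P3) gives $0_GR(Lb)$, and finally (P5) yields $0_GRb$. This contradicts $bI0_G$, so the assumption $\mathscr{E}(A+B)=A$ is untenable and (\ref{E}) fails.

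The step I expect to be the main obstacle is the first one, specifically the insistence that $h$ be everywhere defined and fixed-point-free. The temptation is to reduce directly to Proposition \ref{prop4}, but that proposition presumes strict comparisons $a^{i_j}P(a^j+b)$ across a genuine fixed-point-free permutation, whereas here some $a^j+b$ may already belong to $A$ and hence be efficient in $A+B$, offering no strict dominator. Routing the construction through the non-strict $R$-domination of Theorem \ref{thm4} is what dissolves this difficulty: equality and strict domination are absorbed into a single $R$-chain, and the contradiction is drawn from $0_GRb$ rather than from $0_GPb$, which is all that $bI0_G$ requires.
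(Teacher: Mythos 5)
Your proof is correct, and it takes a genuinely cleaner route than the paper's. The paper's proof of this theorem splits into two situations according as every $a^j+b$ avoids $A$ or some $a^j+b$ lands in $A$: the first is dispatched by Proposition \ref{prop4} (a system of strict comparisons $a^{i_j}P(a^j+b)$ over a fixed-point-free assignment), while the second requires a mixed system of comparisons \emph{and} equalities, handled by a cycle argument in which a symbol $X$ stands ambiguously for $P$ or $=$ and one must check that (P1), (P3) and (P5) operate on both relations. Your device of extracting from Theorem \ref{thm4} only the non-strict conclusion $a^{h(j)}R(a^j+b)$ absorbs both situations into a single $R$-chain: when $a^j+b\in A$ it is efficient in $A+B$ and still $R$-dominated by an efficient point (if only by itself, via reflexivity), and when $a^j+b\notin A$ the domination happens to be strict, but either way only $R$ is carried through the pigeonhole and cycle-collapse steps, and the terminal contradiction $0_GRb$ is precisely what $bI0_G$ forbids. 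This buys a uniform, one-case proof at no cost in hypotheses ((P1), (P3), (P5) and finiteness are all that is used), and it even renders the paper's standing assumption $A+B\neq A$ superfluous, since your argument derives the contradiction without it (consistently so: under (P5) and $bI0_G$, finiteness of $A$ already precludes $A+B=A$). Two small repairs: your disposal of $n=1$ by appeal to Theorem \ref{cor1} is inaccurate, as that theorem assumes a singleton $B$ and properties (P2) and (P4), but the lapse is harmless because with $n=1$ the map $h$ is forced to fix the unique index and your own fixed-point argument yields the contradiction at once; and $h$ need not be a permutation for the pigeonhole step---a self-map of $\{1,\dots,n\}$ suffices, which is all you in fact use.
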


\begin{proof}
Assume $B = \{0_{G}, b\}$ for some $b \in G$ incomparable with $0_{G}$, and write $A = \{a^1, ..., a^n\}$ for $n \geqq 1$. %If $n = 1$, then $bI0_{G}$ implies $a^{1}I(a^{1} + b)$ by property (P3), so that $\mathscr{E}(A+B) = \{a^1, a^1 + b\}$, and equality (\ref{E}) fails. %The efficient subset of $A$ is nonempty (Theorem \ref{thm4}) and will be expressed as $\mathscr{E}(A) = \{a^1, ..., a^n\}$, $n \geqq 2$.

For the sake of contradiction, let us suppose that equality (\ref{E}) holds. Since $a^i \in A+B$ for each $i = 1, ..., n$ (because $0_{G} \in B$), (\ref{E}) says that all the points in $A+B$ distinct from the $a^i$ are dominated in $A+B$. Such points do exist, because $A+B \neq A$. 
%because the opposite would imply falsely that $bR0_G$. 
Put otherwise, there exist in $A+B$ at least one pair of indices $i \neq j$ for which $a^{i} \neq a^{j} + b$. Two situations may materialize. %It will be shown that either case, when brought to its logical end, leads to an absurdity.

\begin{itemize}
\item{\textbf{Situation 1:}} $a^{i} \neq a^{j} + b$ for each $i \neq j$. 

Let $z^{i} = a^{i} + b$, $i = 1, ..., n$, be any member of $A+B$. We have assumed here that $z^{i}$ is different than $a^{j}$ for each $j \neq i$. Furthermore, $z^{i} \neq a^{i}$ because $bI0_{G}$. This establishes that $z^{i} \notin \mathscr{E}(A+B) = \mathscr{E}(A)$, and hence, by Theorem \ref{thm4}, the existence of an index $j$ such that $a^{j}Pz^{i}$. Property (P3) tells us that since $bI0_{G}$, the comparison $a^{i}Rz^{i}$---and by extension $a^{i}Pz^{i}$--- does not hold, so that $j \neq i$. Repeating this argument for all $z^{i}$, $i = 1, ..., n$, we construct a set of indices $i_1, ..., i_n$ with $i_j \neq j$ for all $j$ such that 
\begin{equation*}
  \left\{
    \begin{aligned}
      & a^{i_1}P(a^{1} + b), \\
      & \dots \\
      & a^{i_{n}}P(a^{n} + b), \\
    \end{aligned}
  \right.
\end{equation*} 
an absurdity in view of Proposition \ref{prop4}.
\item{\textbf{Situation 2:}} There exist a pair of indices $i^{*} \neq j^{*}$ such that $a^{i^{*}} = a^{j^{*}} + b$.

%It will prove useful to work with the permutation $f$ introduced in the proof of Proposition \ref{prop4}. 
It will prove useful to work with the set $J = \{j: \forall i = 1, \dots, n,\ a^{i} \neq a^{j}+b\}$, which we argued in the opening paragraph of this proof to be nonempty. For any $j \in J$, $a^{j}+b$ is by construction outside $A$, and so it must lie outside $\mathscr{E}(A+B) = \mathscr{E}(A)$. It follows that for each such $j$ there exists $i_j = 1, \dots, n$ with $a^{i_j}P(a^{j}+b)$. In this situation, we have assumed that there exists an integer (possibly several) $j = 1, \dots, n$ which is not in $J$, and for such an integer we will find an $i_j = 1, \dots, n$ satisfying $a^{i_j}= a^{j} + b$. The net result is a mixed system of comparisons and equalities of the form 
\begin{equation*}
  \left\{
    \begin{aligned}
      & a^{i_{1}}P(a^{j_1} + b), \\
      & \dots \\
      & a^{i_{k}}P(a^{j_k} + b), \\
      & a^{i_{k+1}} = a^{j_{k+1}} + b, \\
      & \dots \\
      & a^{i_{n}} = a^{j_n} + b, \\
    \end{aligned}
  \right.
\end{equation*} 
for some sets of indices $j_1, \dots, j_k \in J$, $k \geqq 1$, $j_{k+1}, \dots, j_n \notin J$ and $i_1, \dots, i_n \in \{1, \dots, n\}$ (with $i_j \neq j$ for each $j$).

We shall show that this system in fact violates the assumption that $bI0_G$. To this end, it helps to borrow some of the ideas of Proposition \ref{prop4}. Let $f$ be the permutation that maps each $j_l$, $l = 1, \dots, n$, to $i_l$. Fix some initial index $j_l$ and consider the sequence of indices \begin{equation*}j_l, f(j_l), \dots, f^{n}(j_l).\end{equation*}
The pigeonhole principle assures us that at least one integer appears twice in this sequence (see the proof of Proposition \ref{prop4} for the full argument), so that there exist $0 \leqq r < m$ for which $f^{r}(j_l) = f^{m}(j_l)$. Denote this integer $u$, and consider the subsequence that begins in $u$ and terminates in $f^{m-r}(u) = u$: \begin{equation*}u, f(u), \dots, f^{m-r-1}(u), u.\end{equation*} 
This subsequence is cyclical in the sense that \begin{equation*}
  \left\{
    \begin{aligned}
      & a^{f(u)}X(a^{u} + b), \\
      & a^{f^2(u)}X(a^{f(u)} + b), \\
      & \dots \\
      & a^{u} X (a^{f^{m-r-1}(u)} + b), \\
    \end{aligned}
  \right.
\end{equation*}
where $X$ signifies either $P$ or $=$, according as the right-hand side index lies (or not) in $J$. Properties (P1) and (P3) operate on both $P$ and $=$, and so it is possible by alternating application of these properties on each of the relations composing the subsystem above---starting, as in the proof of Proposition \ref{prop4}, at the bottom---to arrive at the result that \begin{equation*}a^uX(a^{u} + (m-r+1)b).\end{equation*}
Regardless of whether $X$ here denotes $P$ or $=$, the upshot is that \begin{equation*}0_GXb,\end{equation*}
by virtue of property (P5), which also holds of $=$. However, $bI0_G$ means we can neither have $0_GPb$ nor $0_G = b$.
%mixed system, hybrid system

%It will prove useful to work with the sets $I = \{i: a^{i} = a^{j} + b \text{ for some } j\}$ and $J = \{j: a^{j} + b = a^{i} \text{ for some } i\}$ introduced in Lemma \ref{lem3}. We have assumed $I$, and therefore $J$, to be nonempty. The complement of $J$, as we established in the opening paragraph of this proof, is also nonempty. Therefore, we may choose a $z = a^{j} + b \in A+B$ where $j \notin J$. By construction, $z$ is dominated in $A+B$ by some $z'$, and Theorem \ref{thm4} allows us to take $z'$ in $\mathscr{E}(A+B) = \mathscr{E}(A)$. There exists, therefore, an index $i = 1, ..., n$ such that $a^{i}Pz$. If $i \in I$, then $a^{i} = a^{l} + b$ for some $l \neq i$, and the statement that $a^{i}Pz$ is equivalent to $a^{l}Pa^{j}$, an absurdity in light of the stability of $A$. If $i \notin I$, then, because $z = a^{j} + b$ and $j \notin J$, Lemma \ref{lem3} tells us that $0_{G}Pb$, again in contradiction with the fact that $bI0_{G}$.
\end{itemize}
We have seen that either situation leads to a logical contradiction, whence the falsity of the initial premise that $\mathscr{E}(A+B) = \mathscr{E}(A)$.
\end{proof}

\begin{remark}
\label{rem_thm7}
The requirement that $A$ be finite is indispensable to this theorem. For an illustration, modify Example \ref{ex1} by replacing $B$ with the set $\{(0, 0), (-1, 1)\}$. Evidently, $A$ is stable, and each of (P1), (P3) and (P5) holds. However, because we still have $A+B = A$, (\ref{E}) holds.
\end{remark}

%\begin{example}
%\label{ex7}
%Let $G = \mathbb{R}^{2}$, $A = \{(-1, 0), (0, -1)\}$ and $B = \{(0, 0), (-2, 1)\}$, all endowed with the product order of Example \ref{ex2}. We have that $(0,0) \in B$, $(-1, 1)I(0, 0)$, $\mathscr{E}(A) = A$, and $\mathscr{E}(A+B) = \{(-1, 0), (0, -1), (-3, 1)\} \neq \mathscr{E}(A)$.
%\end{example}

\begin{example}
\label{ex7}
To afford the reader with some intuition about Theorem \ref{thm7} and its proof, we will show on a simple example why, if $R$ and $A$ satisfy the hypotheses of Theorem \ref{thm7}, we cannot find a set $B = \{0_{G}, b\}$ where $bI0_{G}$ and for which equality (\ref{E}) holds. 

Let $G = \mathbb{R}^{2}$, $A = \{(-1, 0), (0, -1)\}$ and $B = \{(0, 0), b\}$ with $b = (b_1, b_2)I0_{\mathbb{R}^2}$, all endowed with the standard product order. This order satisfies the hypotheses of Proposition \ref{prop4}, and $A$ is stable as desired.

%Let us consider the problem of finding a value of $b_1$ and $b_2$ such that $\mathscr{E}(A+B) = \mathscr{E}(A) = \{(-1, 0), (0, -1)\}$. Notice, first, that because $(b_1, b_2)I(0, 0)$, we have that $(b_{1}-1, b_{2})I(-1, 0)$ and $(b_{1}, b_{2}-1)I(0, -1)$. From this it follows that $(b_{1}-1, b_{2}) \neq (-1, 0)$ and $(b_{1}, b_{2}-1) \neq (0, -1)$. Therefore, the sum set $A+B$ can take one of three forms:

We ask whether there exist $b_1, b_2$ such that $\mathscr{E}(A+B) = \mathscr{E}(A) = \{(-1, 0), (0, -1)\}$.

Since $(b_1, b_2)I(0, 0)$, we also have $(b_1-1, b_2)I(-1, 0)$ and $(b_1, b_2-1)I(0, 0)$. These shifted points cannot coincide, therefore, with any of the elements of $A$. Consequently, the sum set $A+B$ must take one of three forms: 

\begin{equation*}
A+B = \{(-1, 0), (0, -1), (b_{1}-1, b_{2}), (b_{1}, b_{2}-1)\},
\end{equation*}
or
\begin{equation*}
A+B = \{(-1, 0), (0, -1), (b_{1}, b_{2}-1)\},
\end{equation*}
or
\begin{equation*}
A+B = \{(-1, 0), (0, -1), (b_{1}-1, b_{2})\}.
\end{equation*}

If $\mathscr{E}(A+B) = \mathscr{E}(A) = A$, then each of the cases above necessarily leads to some absurdity. For example, if $A+B$ bears the first form, then it must follow that $(0,-1)P(b_{1}-1, b_2)$ and $(-1,0)P(b_1, b_2-1)$. The former comparison says that $b_1 \leqq 1 $ and $b_2 < 0$, the latter that $b_1 < 0$ and $b_2 \leqq 1$.  
\end{example}

We may suggest the following generalization of Theorem \ref{thm7}. 

\begin{theorem}
\label{thm8}
Under identical hypotheses to those of Theorem \ref{thm7}, if $A$ is stable, and $B = \{0_{G}, b^{1}, ..., b^{m}\}$, $m \geqq 2$, where $b^{l}I0_{G}$ for each $l = 1, ..., m$, then (\ref{E}) fails.
\end{theorem}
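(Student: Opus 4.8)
The plan is to assume (\ref{E}) and reach a contradiction by the same cycle mechanism used in Proposition \ref{prop4} and Theorem \ref{thm7}, applied to a single generator. Write $A=\{a^1,\dots,a^n\}$. Since $A$ is stable, $\mathscr{E}(A)=A$, so (\ref{E}) forces $\mathscr{E}(A+B)=A$; moreover $0_G\in B$ gives $A\subseteq A+B$, and $A+B$ is finite because $A$ and $B$ are. I would then fix one generator $b:=b^1$ (each $b^l$ satisfies $b^lI0_G$, so the choice is immaterial) and examine only the translates $a^i+b$, arguing that $b^2,\dots,b^m$ play no role.

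First I would build, for every $i$, a relation linking $a^i+b$ back into $A$. If $a^i+b\in A$, then $a^i+b=a^{g(i)}$ for a unique $g(i)$; otherwise $a^i+b\notin A=\mathscr{E}(A+B)$ is inefficient in the finite set $A+B$, and Theorem \ref{thm4} yields an efficient point, hence an element of $\mathscr{E}(A+B)=A$, dominating it, say $a^{g(i)}P(a^i+b)$. In either case $g(i)\neq i$: were $a^{g(i)}=a^i$ we would have $a^iR(a^i+b)$, and applying isotonicity (P3) with $-a^i$ on the left would give $0_GRb$, against $bI0_G$. This yields a self-map $g$ of $\{1,\dots,n\}$ with $g(i)\neq i$ and, for each $i$, a relation $a^{g(i)}X_i(a^i+b)$ with $X_i\in\{P,=\}$. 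This single construction covers both situations distinguished in the proof of Theorem \ref{thm7}: if every translate happens to land in $A$, the relations are all equalities, and the argument below still closes.

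Next I would run the pigeonhole/cycle step. Iterating $g$ from any index and invoking the pigeonhole principle gives a cycle $u,g(u),\dots,g^{c-1}(u),g^{c}(u)=u$ with $c\geqq 1$. Chaining the relations $a^{g^{t+1}(u)}X_t(a^{g^{t}(u)}+b)$ by alternating transitivity (P1) and isotonicity (P3), both of which hold of $P$ and of $=$, produces $a^uX(a^u+cb)$, where $X$ is $P$ as soon as one step is strict and $=$ otherwise. Stripping $a^u$ by left-isotonicity (adding $-a^u$) leaves $0_GX(cb)$, whence $0_GR(cb)$ since $P$ and $=$ both refine $R$. Property (P5), via its consequence (P5'), then forces $0_GRb^1$, contradicting $b^1I0_G$; hence (\ref{E}) is impossible.

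The hard part is the localization claim that justifies discarding $b^2,\dots,b^m$: whenever a translate $a^i+b^1$ is inefficient in $A+B$, its dominator must be found inside $A$ itself and not inside some other translate $A+b^l$. This is precisely where $\mathscr{E}(A+B)=A$ and Theorem \ref{thm4} combine, since domination by an \emph{efficient} point confines the dominator to $A$; consequently the whole argument stays within the single-generator setting and the general form of (P5), with several summands, is never required. A more cumbersome alternative would establish a multi-generator analogue of Proposition \ref{prop4} in which a cycle accrues a sum $p_1b^1+\dots+p_mb^m$ and then appeal to (P5) in full; the reduction just sketched shows this can be avoided. A final detail needing attention, as $G$ need not be abelian, is that the cancellation of $a^u$ relies on the left-addition half of the two-sided property (P3).
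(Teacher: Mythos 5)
Your proof is correct, and it takes a genuinely leaner route than the paper's. The paper proves Theorem \ref{thm8} by splitting into two situations according to whether some translate $a^{j}+b^{l}$ lands in $A$: in Situation 1 it assembles the full multi-generator system (\ref{$S^{5}$}) over all $n\cdot m$ pairs and invokes Proposition \ref{prop4_gen} (whose own proof isolates a single $b^{l}$ and falls back on Proposition \ref{prop4}), while in Situation 2 it builds a mixed system of comparisons and equalities over all pairs $(j,l)$ and then, via an auxiliary map that switches to $b^{1}$ after the first step, extracts a cycle in powers of $b^{1}$ alone. You dissolve this architecture: by admitting the mixed relation $X_i \in \{P, =\}$ from the outset and tracking only the $b^{1}$-translates, your single self-map $g$ with $g(i)\neq i$ subsumes both of the paper's situations (all equalities, all strict comparisons, or any mixture), so neither the case split nor Proposition \ref{prop4_gen} and system (\ref{$S^{5}$}) is needed; the pigeonhole cycle, the alternating (P1)/(P3) chaining, and the left-cancellation of $a^{u}$ (rightly flagged as using the left half of (P3) since $G$ need not be abelian) are exactly the paper's mechanism. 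Your localization claim---that a dominator of an inefficient translate may always be taken inside $A$, because Theorem \ref{thm4} supplies an \emph{efficient} dominator and (\ref{E}) together with stability forces $\mathscr{E}(A+B)=A$---is the same observation the paper exploits in both of its situations, but you make it carry the entire proof. Two small dividends of your version are worth noting: it uses only the single-summand consequence (P5') rather than (P5) in full (which, in effect, is also all the paper's argument uses, though this is not made explicit there), and it does not even presuppose $A+B\neq A$, since the all-equalities instance of your cycle shows that $A+B=A$ is itself incompatible with $b^{1}I0_{G}$ when $A$ is finite, in the spirit of Lemma \ref{lem2}.
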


\begin{proof}
See Section \ref{sec5_6}.
\end{proof}

\begin{remark}
\label{rem_thm8}
This theorem fails if the assumption that $A$ is finite is eliminated. To take Example \ref{ex2} again, we have, replacing $B$ with $\{(0, 0), (-1, 1), (-1, 2)\}$, that $A$ is stable, that $(-1, 1)I(0, 0)$ and $(-1, 2)I(0, 0)$, that properties (P1), (P3) and (P5) are satisfied, but that \begin{equation*}\mathscr{E}(A+B) = \mathscr{E}\biggl(A \cup \{(x-1, y+1): (x, y) \in A\}\biggr) = \mathscr{E}(A)\end{equation*}
by the same arguments as the original example's. Notice that despite the change in $B$, it is still the case that \begin{equation*}A+B = A \cup \{(x-1, y+1): (x, y) \in A\}.\end{equation*}
\end{remark}

To justify Theorem \ref{thm8} one may proceed by analogy to Theorem \ref{thm7}. Recall that our proof of that theorem drew on Proposition \ref{prop4} and Theorem \ref{thm4}. Theorem \ref{thm4} was invoked for an element of the sum set $A+B$. The only condition for such application is that the target set be finite. Here $A+B$ is still finite, and Theorem \ref{thm4} could therefore be used as is. On the other hand, Proposition \ref{prop4} would have to be broadened to encompass the system
\begin{equation}
\tag{$S^{5}$}
\label{$S^{5}$}
  \left\{
    \begin{aligned}
      & a^{i_{11}}P(a^{1} + b^{1}), \\
      & \dots \\
      & a^{i_{1m}}P(a^{1} + b^{m}), \\
      & a^{i_{21}}P(a^{2} + b^{1}), \\
      & \dots \\
      & a^{i_{2m}}P(a^{2} + b^{m}), \\
      & \dots \\
      & a^{i_{nm}}P(a^{n} + b^{m}), \\
      & b^{j}I{0_G}, \forall j = 1, ..., m,\\
    \end{aligned}
  \right.
\end{equation}where, for each $k = 1, ..., n$ and $j = 1, ..., m$, $i_{kj}$ denotes any integer in $\{1, ..., n\}$ subject to $i_{kj} \neq k$. System (\ref{$S^{5}$}) reduces to (\ref{$S^{4}$}) when $m = 1$. %A variant of Lemma \ref{lem3} in which the sets $I$ and $J$ were redefined as $I = \{i: a^{i} = a^{j} + b^{l} \text{ for some } j, l\}$ and $J = \{j: a^{j} + b^{l} = a^{i} \text{ for some } i, l\}$ can likewise be demonstrated.
In the appendix we show that virtually no techniques beyond those used in connection with the original results are required for accomplishing these generalizations.

\begin{proposition}
\label{prop4_gen}
Assume (P1), (P3) and (P5). 
Let $A=\{a^1,\dots,a^n\}$ with $n\geqq 2$, and let $b^1,\dots,b^m \in G$ with $m\geqq 2$. 
For each $k\in\{1,\dots,n\}$ and each $j\in\{1,\dots,m\}$ fix an index $i_{kj}\in\{1,\dots,n\}$ with $i_{kj}\neq k$. Then system (\ref{$S^{5}$}) is inconsistent.
\end{proposition}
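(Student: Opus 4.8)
The plan is to exploit the fact that the hypothesis of this proposition is strictly stronger than (several copies of) the hypothesis of Proposition \ref{prop4}: system (\ref{$S^{5}$}) literally \emph{contains} a complete instance of the single-offset system (\ref{$S^{4}$}). Concretely, I would fix any one index $j$, say $j=1$, and extract from (\ref{$S^{5}$}) the sub-family of comparisons that employ the offset $b^1$, namely $a^{i_{k1}}P(a^k+b^1)$ for $k=1,\dots,n$, together with the incomparability $b^1 I 0_G$. Since the proposition requires $i_{kj}\neq k$ for \emph{every} pair $(k,j)$, the indices $i_{11},\dots,i_{n1}$ satisfy $i_{k1}\neq k$ for each $k$; moreover $a^1,\dots,a^n$ are $n\geqq 2$ distinct points. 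This sub-family is therefore nothing but an instance of (\ref{$S^{4}$}) with $b:=b^1$ and $i_k:=i_{k1}$.

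By Proposition \ref{prop4}, valid under (P1), (P3) and (P5), this instance of (\ref{$S^{4}$}) is already inconsistent. A system one of whose sub-families is inconsistent is itself inconsistent, so (\ref{$S^{5}$}) is inconsistent and the proposition follows. The only detail worth checking is that the appeal to (P5), in fact only to its single-element form (P5'), made inside the proof of Proposition \ref{prop4} remains legitimate for the offset $b^1$; this holds as soon as $b^1\in B$, which is the case in the setting $B=\{0_G,b^1,\dots,b^m\}$ for which the proposition is intended, so no further assumption is needed. I expect essentially no obstacle here: the argument is a pure monotonicity-of-constraints reduction, and it even dispenses with the full strength of (P5).

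For completeness I would also note a self-contained route that bypasses Proposition \ref{prop4} and genuinely engages all the offsets. Fixing any assignment $\sigma:\{1,\dots,n\}\to\{1,\dots,m\}$, define $f(k)=i_{k,\sigma(k)}$ and iterate $f$ from an arbitrary starting index; finiteness of $\{1,\dots,n\}$ forces a repetition and hence a cycle $l_1,\dots,l_r,l_1$. Chaining the corresponding comparisons by alternating (P1) and (P3), exactly as in the proof of Proposition \ref{prop4}, collapses the cycle into the single relation $a^{l_1}P(a^{l_1}+\sum_{s=1}^{r}b^{\sigma(l_s)})$; cancelling $a^{l_1}$ via (P3) yields $0_G P(\sum_{s=1}^{r}b^{\sigma(l_s)})$, and now the \emph{full} form of (P5) produces some offset occurring in the sum that is dominated by $0_G$, contradicting its incomparability with $0_G$. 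The one genuine obstacle along this second route, absent from the reduction, is the bookkeeping needed to guarantee that the accumulated combination is nontrivial and that (P5) is applied only to offsets actually present in the cycle; this is precisely why I would present the reduction to Proposition \ref{prop4} as the primary argument.
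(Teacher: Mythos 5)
Correct, and essentially the same as the paper's own proof: the paper likewise isolates a single offset (it chooses $b^m$ where you choose $b^1$), observes that the corresponding sub-family of (\ref{$S^{5}$}) together with $b^mI0_G$ is precisely an instance of (\ref{$S^{4}$}) with $i_{km} \neq k$ for each $k$, and invokes Proposition \ref{prop4} to conclude that (\ref{$S^{5}$}) is inconsistent. Your side remark checking that the chosen offset lies in $B$ (so that (P5) legitimately applies inside Proposition \ref{prop4}) is a detail the paper leaves implicit, and the alternative cycle-chaining route you sketch is not needed.
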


\begin{proof}
See Section \ref{sec5_5}.
\end{proof}

%\begin{lemma}
%\label{lem3_gen}
%Under the hypotheses of Proposition \ref{prop4_gen}, let $b^1, \dots, b^m \in G$ such that $b^lI0_{G}$ and $(b^l+b^{l'})I0_G$ for each $l, l'$. For each integer $n \geqq 2$, if there exist $n$ distinct points $a^1, \dots, a^{n} \in G$ such that $A = \{a^1, \dots, a^n\}$, and the set $I = \{i: a^{i} = a^{j} + b^{l} \text{ for some } j, l\}$ is nonempty, then so is the set $J = \{j: a^{j} + b^{l} = a^{i} \text{ for some } i, l\}$. Moreover, if $I$ is nonempty, then for each $i \notin I$ and $j \notin J$, there exists $l = 1, \dots, m$ such that $0_GP(b^l + b^{l'})$ whenever $a^{i}P(a^{j} + b^{l'})$ holds for some $l' = 1, \dots, m$.
%\end{lemma}

%\begin{proof}
%See the appendix.
%\end{proof}

\section{Summary and discussion}
\label{sec4}
The purpose of this paper was to develop conditions for the validity of set equality (\ref{E}). The problem originated in a theorem in \cite{yu1985multiple} and a proposition in \cite{ehrgott2005multicriteria} asserting the equality for specific subsets of $\mathbb{R}^{q}$ and a specific choice of relation. This work was an effort at generalizing (\ref{E}) to situations where the sets considered are arbitrary, possibly non-Euclidean, and where the relation used for comparing alternative points in those sets need not constitute an order relation.

In Theorems \ref{thm0} and \ref{thm1}, we obtained conditions sufficient for (\ref{E}) to hold. Theorem \ref{thm1} subsumes the Yu-Ehrgott result. Theorems \ref{thm2}, \ref{thm3} and \ref{thm5}-\ref{thm8} describe situations in which (\ref{E}) is false. Although this was not made explicit, contraposition of these theorems provides necessary conditions for the validity of the equality. Theorem \ref{cor1} states a necessary and sufficient condition in case $A$ is finite and stable and $B$ contains a single point. 

Theorems \ref{thm2} and \ref{thm3} are each predicated on some efficient set---$\mathscr{E}(A)$ in one case and $\mathscr{E}(A+B)$ in the other---being nonempty. Efficient points are guaranteed to exist in a finite set provided $R$ is transitive (Theorem \ref{thm4}). The situation tends to be more intricate where infinite sets are involved.

%As we have seen, the Yu-Ehrgott version of equality (\ref{E}) has served the study of multiple objective optimization problems. A substantial amount of research has been expended on the existence of efficient solutions. To cite only one example, \cite{benson1978existence} shows that, given a multiple objective optimization problem, an auxiliary single-objective optimization problem can be constructed with the property that any optimal solution must yield an efficient solution to the multiple objective problem. He gives three groups of conditions that ensure that the efficient solution set, and hence the efficient value set, is empty should the auxiliary problem be unbounded. Thus, where the value set $f(X) \subseteq \mathbb{R}^{q}$ can be expressed as a sum $A+B$, it should in principle be feasible to determine, relying on such results as the ones presented in this paper, whether or not $\mathscr{E}(f(X))$ is empty. The same results would be appropriate for determining whether $\mathscr{E}(A)$ is empty, with a view to applying Theorem \ref{thm0} or \ref{thm2}, if $A$ was itself the value space of some vector-valued function $g(x) = (g_1(x), ..., g_q(x))$ that fulfilled the requisite conditions.  

A number of promising research programs follow from these conclusions.

Consider first the scope of some of our results. Theorems \ref{thm0}-\ref{thm3} cover both finite and infinite sets. Theorems \ref{thm5}-\ref{thm8} deal strictly with finite sets, while the counterexamples in Remarks \ref{rem_cor1}, \ref{rem_thm6}, \ref{rem_thm7} and \ref{rem_thm8} stress the necessity of this assumption for the four theorems. It remains to be seen whether these theorems have counterparts in cases when $A$ is infinite, $B$ is infinite, or both. This shall be the object of future study.

Next, while the foregoing treatment was entirely theoretical, it would be worthwhile inquiring about its algorithmic implications in light of recent advances in the computation of efficient points in Euclidean sum sets.  \cite{kerberenes2023computing}, \cite{klamroth2024efficient} and \cite{funke2025pareto} devised methods for enumerating the efficient subset of a finite sum set which, across a broad range of experiments, appeared to expend significantly less computation time than exhaustive search. It would be interesting to compare the performance of these methods in determining $\mathscr{E}(A+B)$ with that of a simple enumeration of $\mathscr{E}(A)$ in situations where equality (\ref{E}) holds.  

%A further promising direction is the extension of the analysis beyond groups to more general algebraic and ordered structures, such as semigroups, lattices, or cones of functions. 
A further interesting direction would be to extend the analysis beyond groups to, say, semigroups, lattices or cones, where the absence of additive inverses may lead to new conditions for equality validity. A raft of stochastic and combinatorial objects familiar to decision theorists and practitioners---such as lotteries, nonnegative random variables (as typified, for example, by payoffs and costs) and paths on graphs---live in such algebraic structures.

\section{Appendix: Proofs of Lemma \ref{prop3}, Propositions \ref{prop_revision_1}, \ref{prop_revision_2} and \ref{prop4_gen}, and Theorems \ref{thm6} and \ref{thm8}}
\label{sec5}
In Section \ref{sec3} we sketched a program for accomplishing certain extensions of Theorems \ref{thm5} and \ref{thm7} to the case where $0_G \in B$ and $bI0_{G}$ for all $b \in B \backslash \{0_G\}$, but without providing formal proofs. The program revolved around Lemma \ref{prop3} and Propositions \ref{prop_revision_1}, \ref{prop_revision_2} and \ref{prop4_gen}, which have also yet to be demonstrated. In this appendix we furnish the missing proofs.

\subsection{Proof of Lemma \ref{prop3}}
\label{sec5_1}
\begin{proof}

We must preface this proof by adjusting the definition of cycles to system (\ref{$S^{2}$}). We understand a cycle of (\ref{$S^{2}$}) to denote any equation occurring among the system's $k$ equations that has the form $a^i = a^i + b^s$, $i = 1, ..., k$, $s = 1, \dots, m$, and any group of $r \leqq k$ equations occurring among the system's $k$ equations that have the form
\begin{equation*}
  \left\{
    \begin{aligned}
      & a^{l_1} = a^{l_2} + b^{s_{l_1}}, \\
      & a^{l_2} = a^{l_3} + b^{s_{l_2}}, \\
      & \dots \\
      & a^{l_r} = a^{l_1} + b^{s_{l_r}},
    \end{aligned}
  \right.
\end{equation*}where the superscripts $l_1, \dots, l_r$ run over $\{1, \dots, k\}$.  

Fix an index $k \in \{1, \dots, n-1\}$, and suppose that system (\ref{$S^{2}$}) exhibits no cycles in the sense just defined. If the system is consistent, then for each index $i \leqq k$ (\ref{$S^{2}$}) provides an equation of the form $a^i = a^{j} + b^{s}$ for some $j \in \{1, \dots, n\}$ and $s \in \{1, \dots, m\}$. Suppose hereafter that (\ref{$S^{2}$}) is consistent.

If $j$ lies in $\{k+1, \dots, n\}$, then $a^i$ bears the desired representation with $p_{i, s} = 1$ and $p_{i, s'} = 0$ for all $s' \neq s$.

If not, then (\ref{$S^{2}$}) also expresses $a^{j}$ as $a^{j} = a^{j'} + b^{s'}$ for some $j' \in \{1, \dots, n\}$ and $s' \in \{1, \dots, m\}$, and that means $a^i = a^{j'} + b^{s} + b^{s'}$. Carrying on in this fashion for $t$ steps yields the equation $a^i = a^{j_t} + b^{s} + b^{s'} + \dots + b^{s^{(t)}}$ for an index $j_t \in \{1, \dots, n\}$ and some indices $b^{s''}, \dots, b^{s^{(t)}}$ in $\{1, \dots, m\}$. (The superscript $(t)$ in $s^{(t)}$ means the prime symbol $'$ appears $t$ times above $s$.) There are two possible outcomes to this iterative process. As there are finitely many indices in $\{1, \dots, k\}$, either we encounter, after finitely many substitutions, an index $j_t$ lying in $\{k+1, \dots, n\}$, in which case we will have established the required representation of $a^i$, or, alternatively, the process will forever produce indices in $\{1, \dots, k\}$. But should the latter situation arise, the pigeonhole principle assures us that a cycle would form in the infinite sequence of indices generated. Since we have assumed (\ref{$S^{2}$}) to be non-cyclical, the process must terminate in the manner described in the first situation.
\end{proof}

\subsection{Proof of Proposition \ref{prop_revision_1}}
\label{sec5_2}
\begin{proof}
The observation that $A$ contains at least two points as a result of its instability stems from $P$'s irreflexivity.

Let $k = 1, ..., n-1$. We will suppose for the purpose of arriving at a contradiction that (\ref{$S^{2}$}) is consistent. We need not concern ourselves with the case where a cycle appears, for that definitionally implies $b^{s_{l_1}} + \dots + b^{s_{l_r}} = 0_G$, a violation of property (P4) in light of $b^lI0_G$ for each $l = 1, \dots, m$. Rather, we assume no cycles exist. According to Lemma \ref{prop3}, therefore, there exist some indices $m_{k+1}, ..., m_{n}$ in $\{k+1, ..., n\}$ and some integers $p_{i, j} \geqq 0$, $i = k+1, \dots, n$, $j = 1, \dots, m$ for which
\begin{equation*}
  \left\{
    \begin{aligned}
      & (a^{m_{k+1}} + p_{k+1,1}b^1 + \dots + p_{k+1, m}b^m)Pa^{k+1}, \\
      & \dots \\
      & (a^{m_{n}} + p_{n,1}b^1 + \dots + p_{n, m}b^m)Pa^{n}. \\
    \end{aligned}
  \right.
\end{equation*}

If $k+1 = n$, then $m_n = n$ and we are done. Let us assume henceforth that $k+1 < n$, which presupposes that $n \geqq 3$. (If $n = 2$, $k+1$ equals $n$ by definition.) 

The rest of the proof proceeds in the same mould as the proof of Proposition \ref{prop1}. We will refer to the above system as (SC').

%If any $m_{i}$, $i = k+1, ..., n$, satisfies $m_{i} = i$, we are done. The other scenario, which we presently consider, is if $m_{i} \neq i$ for all $i = k+1, ..., n$. 

Let $f$ denote the permutation of $\{k+1, \dots, n\}$ that maps each $i = k+1, \dots, n$ to $m_i$. Fix some index $i$, and consider the sequence of indices generated through iterative application of $f$: \begin{equation*}i, f(i), f^{2}(i),\dots\end{equation*} 
Because this sequence has its terms in a finite set, there must be at least two of them, say $f^{r}(i)$ and $f^{m}(i)$ with $0 \leqq r \leqq m$, that are identical. The subsequence that spans $f^{r}(i), \dots, f^{m}(i)$ says that \begin{equation*}
  \left\{
    \begin{aligned}
      & (a^{f^{r+1}(i)} + p_{f^{r}(i), 1}b^1 + \dots + p_{f^{r}(i), m}b^m)Pa^{f^{r}(i)}, \\
      & \dots \\
      & (a^{f^{m}(i)} + p_{f^{m-1}(i), 1}b^1 + \dots + p_{f^{m-1}(i), m}b^m)Pa^{f^{m-1}(i)}. \\
    \end{aligned}
  \right.
\end{equation*}
When we apply property (P1) then property (P3) to each of these $(m-r)$ comparisons, beginning with the bottom comparison and ascending, we find that \begin{equation*}\Biggl(a^{f^{m}(i)} + \Biggl(\sum_{l = m-1}^{r}p_{f^{l}(i), 1}\Biggr)b^1 + \dots + \Biggl(\sum_{l = m-1}^{r}p_{f^{l}(i), m}\Biggr)b^m\Biggr)Pa^{f^{r}(i)},\end{equation*}
which is to say \begin{equation*}\Biggl[\Biggl(\sum_{l = m-1}^{r}p_{f^{l}(i), 1}\Biggr)b^1 + \dots + \Biggl(\sum_{l = m-1}^{r}p_{f^{l}(i), m}\Biggr)b^m\Biggr]P0_G,\end{equation*}
because $f^{m}(i) = f^{r}(i)$ and property (P3) holds. Property (P4) entitles us to conclude that there exists at least one $b^l$ among $b^1, \dots, b^m$ satisfying $b^lR0_G$. Therefore, (SC') is inconsistent, and a fortiori so is (\ref{$S^{2}$}).
\end{proof}

\subsection{Proof of Proposition \ref{prop_revision_2}}
\label{sec5_3}
\begin{proof}
Construct, in the manner of Proposition \ref{prop1rep}, a cycle within system (\ref{$S^{3}$}). If the system holds, that cycle would imply that \begin{equation*}0_G = b^{s_1} + \dots + b^{s_r}\end{equation*} for some $r \geqq 1$ indices among $l_1, \dots, l_n$. Because $R$ is symmetric, this would in turn imply \begin{equation*}(b^{s_1} + \dots + b^{s_r})R0_G,\end{equation*}
hence the existence of an $s_i$ such that $b^{s_i}R0_G$, according to property (P4). But that would run counter to the assumption that $b^{s_i}I0_G$.
\end{proof}

\subsection{Proof of Theorem \ref{thm6}}
\label{sec5_4}
\begin{proof}
 Write $A = \{a^1, ..., a^n\}$, $n \geqq 2$. After relabeling the points if necessary, write $\mathscr{E}(A) = \{a^1, ..., a^k\}$, with $k = 1, ..., n$. 
    
    If $A$ is stable, then $\mathscr{E}(A) = A \not\subseteq A+B$ by Proposition \ref{prop_revision_2}. Thus $\mathscr{E}(A) \neq \mathscr{E}(A+B)$. 
    
    If $A$ is unstable, then $k < n$, and we let $D(A) = \{a^{k+1}, ..., a^{n}\}$ denote the dominated portion of $A$. Since $A$ is finite, Theorem \ref{thm4} assures us that for each $j = k+1, ..., n$ there exists $i_j = 1, ..., k$ satisfying $a^{i_j}Pa^{j}$. Now, if $\mathscr{E}(A+B) = \mathscr{E}(A)$ were true, $\mathscr{E}(A)$ would be a subset of $A+B$, i.e.,
    \begin{equation*}
  \left\{
    \begin{aligned}
      & a^1 = a^{j_1} + b^{l_1}, \\
      & \dots \\
      & a^k = a^{j_k} + b^{l_k}, \\
    \end{aligned}
  \right.
\end{equation*}
for some $k$ indices $j_1, ..., j_k$ in $\{1, ..., n\}$ and $k$ indices $l_1, \dots, l_k$ in $\{1, \dots, m\}$. But Proposition \ref{prop_revision_1} tells us that cannot be so, since it is already the case that $b^{l_i}I0_G$ for each $l_i$ and $a^{i_j}Pa^{j}$ for all $j = k+1, ..., n$. Consequently, $\mathscr{E}(A) \not \subseteq A+B$, and therefore $\mathscr{E}(A) \neq \mathscr{E}(A+B)$.
\end{proof}

\subsection{Proof of Proposition \ref{prop4_gen}}
\label{sec5_5}
\begin{proof}
Proposition \ref{prop4_gen} makes the claim that, under properties (P1), (P3) and (P5), if $A=\{a^1,\dots,a^n\}$ with $n\geqq 2$, and $b^1,\dots,b^m \in G$ are $m \geqq 2$ members of $G$, and if for each $k\in\{1,\dots,n\}$ and each $j\in\{1,\dots,m\}$ an index $i_{kj}\in\{1,\dots,n\}$ is selected so that $i_{kj}\neq k$, then the version of system (\ref{$S^{5}$}) engendered by this choice of indices is inconsistent. This conclusion prolongs Proposition \ref{prop4}. %We will also establish it by induction on $n$.

Notice that when we isolate any of the $b^l$, say $b^m$, a necessary condition for the consistency of system (\ref{$S^{5}$}) is that the subsystem \begin{equation*}
  \left\{
    \begin{aligned}
      & a^{i_{1m}}P(a^{1} + b^m), \\
      & \dots \\
      & a^{i_{nm}}P(a^{n} + b^m), \\
      & b^mI{0_G},
    \end{aligned}
  \right.
\end{equation*} 
be consistent. But it has already been shown (see Proposition \ref{prop4}) that such cannot be the case. Therefore, (\ref{$S^{5}$}) is inconsistent.

\end{proof}

\subsection{Proof of Theorem \ref{thm8}}
\label{sec5_6}
\begin{proof}
As announced in the body of the article, our proof of Theorem \ref{thm8} does not depart in crucial respects from that of its special case Theorem \ref{thm7}.

Assume $B = \{0_{G}, b^1, \dots, b^m\}$ for some $b^1, \dots, b^m \in G$ incomparable with $0_{G}$, and write $A = \{a^1, \dots, a^n\}$ for $n \geqq 1$. %If $n = 1$, then property (P3) and $b^lI0_{G}$ imply $a^{1}I(a^{1} + b^l)$ for each $l = 1, \dots, m$. Furthermore, because $b^{l}Ib^{l'}$ for each $l \neq l'$ we have $(a^{1}+b^l)I(a^1 + b^{l'})$, again using property (P3). This means $\mathscr{E}(A+B) = \{a^1, a^1 + b^1, \dots, a^1+b^m\}$, and equality (\ref{E}) fails. %The efficient subset of $A$ is nonempty (Theorem \ref{thm4}) and will be expressed as $\mathscr{E}(A) = \{a^1, ..., a^n\}$, $n \geqq 2$.

We will assume, contrary to the theorem, that equality (\ref{E}) holds. Since $a^i \in A+B$ for each $i = 1, ..., n$ (because $0_{G} \in B$), the equality says that all the points in $A+B$ distinct from the $a^i$ are dominated in $A+B$. Such points do exist, because $A+B \neq A$. 
%because the opposite would imply falsely that $bR0_G$. 
Put otherwise, there exist at least one pair of indices $i \neq j$ in $\{1, \dots, n\}$ and a superscript $l = 1, \dots, m$ for which $a^{i} \neq a^{j} + b^l$. One of two situations may arise. %It will be shown that either case, when brought to its logical end, leads to an absurdity.

\begin{itemize}
\item{\textbf{Situation 1:}} $a^{i} \neq a^{j} + b^l$ for each $i, j = 1, \dots, n$, $i \neq j$, and $l = 1, \dots, m$. 

Let $z^{i, l} = a^{i} + b^{l}$, $i = 1, ..., n$, $l = 1, \dots, m$ be any member of $A+B$. We have assumed here that $z^{i}$ is different than $a^{j}$ for each $j \neq i$. Furthermore, $z^{i} \neq a^{i}$ because $b^{l}I0_{G}$. This establishes that $z^{i} \notin \mathscr{E}(A+B) = \mathscr{E}(A)$, and hence, by Theorem \ref{thm4}, the existence of an index $j$ such that $a^{j}Pz^{i,l}$. Property (P3) tells us that since $b^lI0_{G}$, the comparison $a^{i}Rz^{i,l}$---and by extension $a^{i}Pz^{i,l}$--- does not hold, so that $j \neq i$. By repeating this argument for all $z^{i,l}$, $i = 1, ..., n$, $l = 1, \dots, m$ we construct a set of indices $i_{11}, ..., i_{1m}, \dots, i_{n1}, \dots, i_{nm}$ with $i_{jl} \neq j$ for all $j$ and $l$ such that 
\begin{equation*}
  \left\{
    \begin{aligned}
      & a^{i_{11}}P(a^{1} + b^{1}), \\
      & \dots \\
      & a^{i_{1m}}P(a^{1} + b^{m}), \\
      & a^{i_{21}}P(a^{2} + b^{1}), \\
      & \dots \\
      & a^{i_{2m}}P(a^{2} + b^{m}), \\
      & \dots \\
      & a^{i_{nm}}P(a^{n} + b^{m}), \\
      & b^{j}I{0_G}, \forall j = 1, ..., m,\\
    \end{aligned}
  \right.
\end{equation*}
an absurdity in view of Proposition \ref{prop4_gen}.
\item{\textbf{Situation 2:}} There exist $i^{*} \neq j^{*}$ and $l^* = 1, \dots, m$ such that $a^{i^{*}} = a^{j^{*}} + b^{l^*}$.

%It will prove useful to work with the permutation $f$ introduced in the proof of Proposition \ref{prop4}. 
It will prove useful to work with the set \begin{equation*}J = \{(j, l): j = 1, \dots, n,\ l = 1, \dots, m,\ \forall i = 1, \dots, n,\ a^{i} \neq a^{j}+b^l\},\end{equation*} which we have argued in the introduction to this proof to be nonempty. Let $k \geqq 1$ denote the cardinality of $J$.

For any $(j,l) \in J$, $a^{j}+b^l$ lies by construction outside $A$ and hence outside $\mathscr{E}(A+B) = \mathscr{E}(A)$. It follows that for each such $(j, l)$ there exists $i_{jl} = 1, \dots, n$ with $a^{i_{jl}}P(a^{j}+b^l)$. In this situation we have assumed that there exist an integer (possibly several) $j = 1, \dots, n$ and an $l = 1, \dots, m$ such that $(j,l)$ is not in $J$, and for such a pair we will find an $i_{jl} = 1, \dots, n$ satisfying $a^{i_{jl}}= a^{j} + b^l$. This leads to a mixed system of comparisons and equalities of the form 
\begin{equation*}
  \left\{
    \begin{aligned}
      & a^{i_{1}}P(a^{j_1} + b^{l_1}), \\
      & \dots \\
      & a^{i_{k}}P(a^{j_k} + b^{l_k}), \\
      & a^{i_{k+1}} = a^{j_{k+1}} + b^{l_{k+1}}, \\
      & \dots \\
      & a^{i_{n \cdot m}} = a^{j_{n\cdot m}} + b^{l_{n \cdot m}}, \\
    \end{aligned}
  \right.
\end{equation*} 
for some pairs of indices $(j_1, l_1), \dots, (j_k, l_k) \in J$, $(j_{k+1}, l_{k+1}), \dots, (j_{n \cdot m}, l_{n \cdot m}) \notin J$ and $i_1, \dots, i_{n \cdot m} \in \{1, \dots, n\}$ (with $i_j \neq j$ for each $j$). Notice that because $J$ and its complement partition the product set $\{1, \dots, n\} \times \{1, \dots, m\}$, all of the latter's elements appear on the right-hand side of the system. 

We shall show that this system cannot be reconciled with the assumption that $b^lI0_G$ for each $l = 1, \dots, m$. Let $f$ map each $(j_p, l_p)$ in the system, $p = 1, \dots, n \cdot m$, to $i_p$. In this connection, denote with $g$ the mapping that sends each $(j_p, l_p)$ to $f(f(j_p, l_p), 1) \in \{1, \dots, n\}$. 

Fix some arbitrary pair $(j_p, l_p)$ and consider the sequence \begin{equation*}f(j_p, l_p), g(j_p, l_p), \dots,  g^{n \cdot m}(j_p, l_p).\end{equation*}
The import of this sequence is that 
\begin{equation*}
  \left\{
    \begin{aligned}
      & a^{f(j_p, l_p)}X(a^{j_p} + b^{l_p}), \\
      & a^{g(j_p, l_p)}X(a^{f(j_p, l_p)} + b^{1}), \\
      & \dots \\
      & a^{g^{n \cdot m}(j_p, l_p)}X(a^{g^{n \cdot m - 1}(j_p, l_p)} + b^{1}), \\
    \end{aligned}
  \right.
\end{equation*} 
where $X$ denotes either $P$ or $=$, according as the right-hand side pair belongs (or not) to $J$. The precise character of each of the $n \cdot m$ relations composing the system is immaterial to our argument.

Each term of the sequence ranges in $\{1, \dots, n\}$, while the total number of terms is $(n \cdot m +1)$. By the pigeonhole principle, at least one term must appear twice in the sequence, and that creates a cycle which, following the argument developed in the proof of Theorem \ref{thm7}, implies \begin{equation*}a^uX(a^{u} + (s-r+1)b^1)\end{equation*}
for some $0 \leqq u \leqq s$. 

%Concentrate now on the subsequence that begins in $f^{r}(j_p, l_p)$ and terminates in $f^{m}(j_p, l_p) = f^{r}(j_p, l_p)$: \begin{equation*}f^{r}(j_p, l_p), f^{r+1}(j_p, l_p), \dots, f^{m-1}(j_p, l_p), f^{m}(j_p, l_p).\end{equation*} 
%This subsequence is cyclical in the sense that %\begin{equation*}
  %\left\{
   % \begin{aligned}
    %  & a^{f(u)}X(a^{u} + b), \\
    %  & a^{f^2(u)}X(a^{f(u)} + b), \\
     % & \dots \\
     % & a^{u} X (a^{f^{m-r-1}(u)} + b), \\
    %\end{aligned}
  %\right.
%\end{equation*}
%where $X$ signifies either $P$ or $=$, according as the right-hand side index lies (or not) in $J$. Properties (P1) and (P3) operate on both $P$ and $=$, and so it is possible by alternating application of these properties on each of the relations composing the subsystem above---starting, as in the proof of Proposition \ref{prop4}, at the bottom---to arrive at the result that 
%\begin{equation*}a^uX(a^{u} + (m-r+1)b^1).\end{equation*}
Regardless of whether $X$ here denotes $P$ or $=$, we conclude that \begin{equation*}0_GXb^1,\end{equation*}
by virtue of property (P5), which also holds of $=$. This runs athwart the theorem's assumption that $b^lI0_G$ for all $l = 1, \dots, m$. %However, $bI0_G$ means we can neither have $0_GPb$ nor $0_G = b$.
\end{itemize}
In both situations, the premise that (\ref{E}) is true leads to a logical contradiction. Thus, the equality fails.

\end{proof}

\backmatter

%\bmhead{Acknowledgements}

%Acknowledgements are not compulsory. Where included they should be brief. Grant or contribution numbers may be acknowledged.

%Please refer to Journal-level guidance for any specific requirements.

\section*{Statements and declarations}
This author has no competing interests to declare.

\bibliography{sn-bibliography}% common bib file
%% if required, the content of .bbl file can be included here once bbl is generated
%%\input sn-article.bbl

\end{document}